\theoremstyle{definition}
\newtheorem{defn}{Definition}[section]
\newtheorem{thm}[defn]{Theorem}
\newtheorem{prop}[defn]{Proposition}
\newtheorem{lem}[defn]{Lemma}
\newtheorem{ex}[defn]{Example}
\newtheorem{prob}{Problem}
\DeclareMathOperator{\ind}{\mathbf{ind}}
\DeclareMathOperator{\dis}{d_{\mathfrak{g}}}
\renewcommand{\a}[1]{\mathfrak{#1}} 
\newcommand{\cl}[1]{\mathrm{#1}} 
\newcommand{\CK}{\cl{K}} 
\newcommand{\defeq}{\stackrel{\mathclap{\textsf{\tiny{def}}}}{=}}
\newcommand\add{\stackrel{\mathclap{\mathsmaller{\mathsmaller{\langle\bullet\rangle}}}}{\longrightarrow}}
\newcommand\addn[1]{\stackrel{\mathclap{\mathsmaller{\mathsmaller{\langle\bullet\rangle}}}}{\longrightarrow}_{\times #1}}
\newcommand\dda{\stackrel{\mathclap{\mathsmaller{\mathsmaller{\langle\bullet\rangle}}}}{\longleftarrow}}
\newcommand{\ie}{i.e.\ }
\newcommand{\eg}{e.g., }
\newcommand{\cf}{cf.\ }
\newcommand{\Core}[1]{\mathfrak{C}_{#1}} 
\newcommand{\Mpl}[2]{\mathfrak{C}_{#2,#1}} 
\newcommand{\MuN}{\mathfrak{C}_{\omega}} 
\DeclareMathOperator{\MGen}{d} 
\newcommand{\bigslant}[2]{{\raisebox{.2em}{$#1$}\left/\raisebox{-.2em}{$#2$}\right.}}
\title{On the networks of large embeddings}
\thanks{This research is support by the Hungarian National Research, Development and Innovation Office -- NKFIH, grant no.\ FK 134732.}
\author{Tu\u{g}ba Aslan}
\address{Tu\u{g}ba Aslan,
Medipol Business School, Istanbul Medipol University, Istanbul, Turkey.}
\email{tugba.aslankhalifa@medipol.edu.tr}
\author{Mohamed Khaled}
\address{Mohamed Khaled, Faculty of Engineering and Natural Sciences, Bah\c{c}e\c{s}ehir University, Istabul, Turkey.}
\email{mohamed.khalifa@eng.bau.edu.tr}
\author{Gergely Sz\'ekely}
\address{Gergely Sz\'ekely, Alfr\'ed R\'enyi Institute of Mathematic, Budapest, Hungary \& University of Public Service, Budapest, Hungary.}
\email{szekely.gergely@renyi.hu}
\subjclass[2000]{Primary 08A60, 03C05; Secondary 08A05, 05C12}
\keywords{large embeddings; generator distance; monounary algebras}
\begin{document}

\begin{abstract}
We define a special network that exhibits the large embeddings in any class of similar algebras. With the aid of this network, we introduce a notion of distance that conceivably counts the minimum number of dissimilarities, in a sense, between two given algebras in the class in hand; with the possibility that this distance may take the value $\infty$. We display a number of inspirational examples from different areas of algebra, \eg group theory and monounary algebras, to show that this research direction can be quite remarkable.
\end{abstract}

\maketitle

\section{Introduction}

We assume familiarity with the most basic conceptions of universal algebra; the notions of algebra, subalgebra, homomorphism, isomorphism, embedding, direct product, etc. A modest perceptiveness to some special areas of algebra is also required, \eg group theory, fields, lattices, Boolean algebras, etc. This paper is written with the appetite of reducing the technical difficulties and the passion of creating a text accessible to mathematicians with general background. There is a considerable number of illustrative figures that can demonstrate some concepts and ideas of proofs.

Throughout, unless otherwise stated, we assume that $\CK$ is an arbitrary but fixed class of similar algebras. We define a special network that exhibits the large embeddings in the class $\CK$. An algebra $\a{A}$ is largely embeddable into another algebra $\a{B}$ in $\CK$ if{}f there is a member of $\CK$ that serves as an intermediate algebra in the following sense: It is isomorphic to $\a{A}$, and it is a large subalgebra of $\a{B}$. The word `large' here means that this intermediate algebra needs at most one extra element to generate the whole of $\a{B}$. Similar networks can be found in \cite{ksz} and \cite{kszlf}.

Investigating this network seems to be thought-provoking and promising to be fruitful. For instance, with the aid of this network, we introduce a notion of distance that conceivably counts the minimum number of dissimilarities, in a sense, between two given structures in $\mathrm{K}$; with the possibility that this distance may take the value $\infty$. Formally, this distance counts the minimum number of steps needed from the symmetric closure of the large embedding relation to reach an algebra from another algebra. This notion of distance provides a framework that can give a qualitative and quantitative analysis of the connections between the algebraic structures in the class $\CK$.

In this paper, we commence a research project that aims to investigate the network and the distance notions for different classes of algebras. We give several examples from different areas of algebra, \eg group theory, lattice theory, etc. As an example, we give a complete characterization of the network of all subgroups of the group $\bigslant{\mathbb{Q}}{\mathbb{Z}}$. The last section is completely devoted to the network of monounary algebras. For instance, we give an algorithmic method to determine the distance between any two finite monounary algebras. Similar investigations concerning Boolean algebras and other algebras of logic are burgeoning \cite{ksz}, \cite{LC21Mohamed} and \cite{LC21Gergely}. 

We foresee that this open-end research direction would appeal to mathematicians from different disciplines, and that many captivate results can be obtained. We are also discerned with the potential applications that this project may activate. Here is an example: If we suppose that $\CK$ is the class of all Lindenbaum--Tarski algebras of first-order theories, then the notion of distance we propose here brings a method to determine the concepts distinguishing two theories in hand. This idea could be used to conclude that only one concept distinguishes classical and relativistic kinematics, namely the existence of observers who are at absolute rest, see \cite{kszlf} and \cite{LSz18}. This is indeed an interesting result for logicians, philosophers and physicists alike.

{\textbf{On the notations}:} We usually use the German capital letters
$\a{A}, \a{B}, \a{C}, \ldots$ to denote algebras, and we use the
corresponding Latin capital letters $A,B,C,\ldots$ to denote their
underlying sets. However, we may violate this rule when we give
examples from areas of algebra where such convention is not used, \eg
group theory. We use the notation $\a{A}\subseteq\a{B}$ to convey that
$\a{A}$ is a subalgebra of $\a{B}$. For an algebra $\a{A}$ and a
subset $X\subseteq A$, we let $\langle X\rangle$ denote the subalgebra
of $\a{A}$ generated by $X$. We consider $0$ as a natural number and denote the set of natural numbers by
$\omega$.

\section{The networks of large embeddings}

We start with the central notion of the present paper, the notion of \emph{large subalgebras}.

\begin{defn}
Suppose that $\a{A}$ is a subalgebra of $\a{B}$. We say that
\emph{$\a{A}$ is a \textbf{large subalgebra} in $\a{B}$} if{}f
there is an element $b\in B$ such that $\langle A\cup\{b\}\rangle=\a{B}$.\footnote{This notion was inspected in some other resources in the literature, \cf \cite{vujosevic}.}
\end{defn}

For an example, Table~\ref{fig:symmetricgroup} on page
\pageref{fig:symmetricgroup} characterizes all the large subgroups of
the symmetric group $S_4$. This table also shows that a large proper
subalgebra is not necessarily maximal among proper subalgebras, \eg
$\{e,(12)\}$ is large but not maximal in $S_4$. However, the converse
is always true; every subalgebra which is maximal among proper
subalgebras is large. The table also shows an example of two
isomorphic subgroups, namely $\{e,(12)\}$ and $\{e,(12)(34)\}$, where
the former is large in $S_4$ and the latter is not large in $S_4$.

\begin{table}[!ht]
\renewcommand{\arraystretch}{1.4}
\begin{tabular}{||c|l|c|c||}
\hline
\hline 
$n$ & Subgroups of $S_4$ of order $n$ & $\cong$ type & Large in $S_4$?\\
\hline
\hline 
1 & $\{e\}$ & $Z_1$ & NO\\
\hline
\hline 
\multirow{9}{*}{2} & $\{e,(12)\}$ & \multirow{6}{*}{$Z_2$} & \multirow{6}{*}{YES}\\
&$\{e,(13)\}$ &&\\ 
& $\{e,(14)\}$ &&\\ 
&$\{e,(23)\}$ &&\\ 
&$\{e,(24)\}$ &&\\
&$\{e,(34)\}$ && \\
\cline{2-4}
& $\{e,(12)(34)\}$ & \multirow{3}{*}{$Z_2$} & \multirow{3}{*}{NO} \\ 
& $\{e,(13)(24)\}$ &&\\ 
& $\{e,(14)(23)\}$ && \\
\hline
\hline 
\multirow{4}{*}{3} & $\{e,(123),(132)\}$ & \multirow{4}{*}{$Z_3$} &\multirow{4}{*}{YES}\\ 
&$\{e,(124),(142)\}$ &&\\
&$\{e,(134),(143)\}$ &&\\
&$\{e,(234),(243)\}$ && \\
\hline
\hline 
\multirow{7}{*}{4} & $\{e,(12),(34),(12)(34)\}$ & \multirow{3}{*}{$Z_2\times Z_2$} & \multirow{3}{*}{YES}\\
&$\{e,(13),(24),(13)(24)\}$ &&\\
& $\{e,(14),(23),(14)(23)\}$ && \\
\cline{2-4}
&  $\{e,(1324),(12)(34),(1423)\}$ & \multirow{3}{*}{$Z_4$} & \multirow{3}{*}{YES} \\
& $\{e,(1234),(13)(24),(1432)\}$ && \\
&$\{e,(1243),(14)(23),(1342)\}$ && \\
\cline{2-4}
& $\{e,(12)(34),(13)(24),(14)(23)\}$  & $Z_2\times Z_2$ & NO \\
\hline
\hline
\multirow{4}{*}{6} &$\{e,(123),(132),(12),(13),(23)\}$ & \multirow{4}{*}{$S_3$} & \multirow{4}{*}{YES}\\ & $\{e,(124),(142),(12),(14),(24)\}$ &&\\
&$\{e,(134),(143),(13),(14),(34)\}$ &&\\ &$\{e,(234),(243),(23),(24),(34)\}$ &&\\
\hline
\hline 
\multirow{3}{*}{8}&$\{e,(12),(34),(12)(34),(13)(24),(14)(23),(1324),(1423)\}$& \multirow{3}{*}{$D_4$} & \multirow{3}{*}{YES}\\
&$\{e,(13),(24),(13)(24),(12)(34),(14)(23),(1234),(1432)\}$ &&\\
&$\{e,(14),(23),(14)(23),(12)(34),(13)(24),(1243),(1342)\}$ &&\\
\hline
\hline
\multirow{2}{*}{12} & $\big\{e,(12)(34),(13)(24),(14)(23), (123),(132),(124), (142),(134),$ & \multirow{2}{*}{$A_4$} & \multirow{2}{*}{YES} \\ & \hspace{8cm} $(143),
(234),(243)\big\}$   && \\
\hline
\hline 
24 & $S_4$ & $S_4$ & YES\\
\hline
\hline 
\end{tabular}
\caption{Large subgroups of the symmetric group $S_4$\label{fig:symmetricgroup}}
\end{table}

Now, we construct a special network which is convened from the large inclusions in $\CK$. This network is a special case of the cluster networks defined in \cite[section 3]{kszlf}. The \emph{\textbf{network $\mathcal{N}(\CK)$ of large  embeddings} in $\CK$} is defined to be the graph whose vertices are the members of $\CK$, and which has two types of edges: {\color{red}{red dashed}} edges connecting the isomorphic members of $\CK$ and {\color{blue}{blue}} edges connecting two algebras of $\CK$ if one of them is a large subalgebra of the other one.\footnote{We note that the definition of the network $\mathcal{N}(\CK)$ is not formalizable in Zermelo–Fraenkel set theory if $\CK$ is a proper class. However, all our results can be formulated within von Neumann–Bernays–G\"odel set theory (NBG). One cannot define ordered pairs of proper classes even in NBG. But this does not bother us because we do not really need ordered pairs here. All our definitions can be understood as follows: “for classes $x$, $y$, etc., having certain properties there are classes $z$, etc., such that...”. We use the terminology of graphs only for the sake of simplicity.}

Every vertex in the network $\mathcal{N}(\CK)$ has two loops; one is {\color{red}{red dashed}}, and the other is {\color{blue}{blue}}. When we illustrate networks of large embeddings, we will omit these loops for the lucidity of the illustrations. 

\begin{ex}
Suppose that $\CK$ is the class of all subalgebras of the $8$-element Boolean algebra $\a{A}$ having atoms $a$, $b$ and $c=\lnot(a\lor b)$. Then the network of $\CK$ is illustrated in Figure~\ref{fig:BA}.

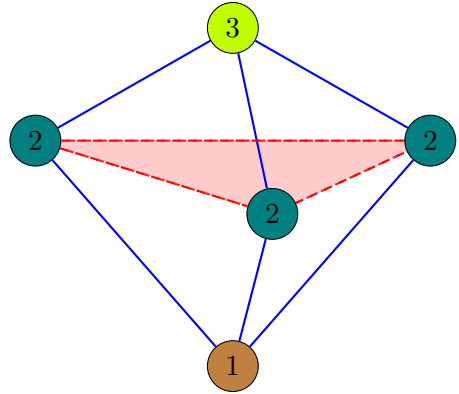
\begin{figure}[!ht]
\begin{minipage}{0.38\textwidth}
 \tikz\node[circle,draw,fill=brown,scale=0.8]{$1$}; The $2$-element subalgebra $\{0,1\}$ \\
        \tikz\node[circle,draw,fill=teal,scale=0.8]{$2$}; Subalgebras having $4$-elements:
        
        \begin{itemize}
            \item generated by $a$: $\{0,a,\lnot a,1\}$
            \item generated by $b$: $\{0,b,\lnot b,1\}$
            \item generated by $c$: $\{0,c,\lnot c,1\}$
        \end{itemize}
        
    \tikz\node[circle,draw,fill=lime,scale=0.8]{$3$}; The whole algebra $\a{A}$ \\

	\tikz\path[red,thick,dashed] [-] (0,0) edge (1,0); Red dashed edges\\
	\tikz\path[blue,thick] [-] (0,0) edge (1,0); Blue edges\\

\end{minipage}
\begin{minipage}{0.6\textwidth}
\centering
\begin{tikzpicture}[scale=0.75]
\begin{scope}
\filldraw[thick,red!90,dashed,fill=red!20] (-1.5,0) -- (5.5,0) -- (2.7,-1.3) -- cycle;
    \node[circle,draw,fill=teal] (D) at (-1.5,0) {$2$};
    \node[circle,draw,fill=teal] (A) at (5.5,0) {$2$};
    \node[circle,draw,fill=teal] (B) at (2.7,-1.3) {$2$};
    \node[circle,draw,fill=brown] (C) at (2,-4) {$1$};
    \node[circle,draw,fill=lime] (V) at (2,2) {$3$};
    \path[blue,thick] [-] (A) edge (C);
    \path[blue,thick] [-] (B) edge (C);
    \path[blue,thick] [-] (D) edge (C);
   \path[blue,thick] [-] (A) edge (V);
    \path[blue,thick] [-] (B) edge (V);
    \path[blue,thick] [-] (D) edge (V);
 \path[red,thick,dashed] [-] (D) edge (A);
 \path[red,thick,dashed] [-] (D) edge (B);
 \path[red,thick,dashed] [-] (B) edge (A);
\end{scope}
\end{tikzpicture}
\end{minipage}
\caption{A small network of Boolean algebras}
\label{fig:BA}
\end{figure}
\end{ex}

\begin{ex}
Suppose that $\CK$ is the class of all subgroups of the alternating group $A_4$. Then the network
$\mathcal{N}(\CK)$ is illustrated in Figure~\ref{fig:s4}.

\begin{figure}[!ht]
\begin{minipage}{0.38\textwidth}
 \tikz\node[circle,draw,fill=brown,scale=0.8]{$1$}; Trivial subgroup \\
        \tikz\node[circle,draw,fill=violet,scale=0.8]{$2$}; Subgroups isomorphic to $Z_2$ \\
        \tikz\node[circle,draw,fill=teal,scale=0.8]{$3$}; Subgroups isomorphic to $Z_3$ \\
        \tikz\node[circle,draw,fill=cyan,scale=0.8]{$4$}; Subgroup isomorphic to $Z_2\times Z_2$ \\
        \tikz\node[circle,draw,fill=lime,scale=0.8]{$5$}; The alternating group $A_4$ \\

	\tikz\path[red,thick,dashed] [-] (0,0) edge (1,0); Red dashed edges\\
	\tikz\path[blue,thick] [-] (0,0) edge (1,0); Blue edges\\

\end{minipage}
\begin{minipage}{0.6\textwidth}
\centering
\begin{tikzpicture}[scale=0.6]
\filldraw[thick,red!90,dashed,fill=red!20] (0,-0.6) -- (-2,0) -- (1,0) -- cycle;
\fill[red!90,fill=red!20] (-5,-4) -- (6,-4) -- (4,-6) -- (-7,-6) -- cycle;
     \node[circle,draw,fill=violet,scale=0.8] (D) at (-2,0) {$2$};
     \node[circle,draw,fill=violet,scale=0.8] (A) at (1,0) {$2$};
     \node[circle,draw,fill=violet,scale=0.8] (B) at (-0.2,-0.6) {$2$};
     \node[circle,draw,fill=brown,scale=0.8] (C) at (-0.5,-2) {$1$};
     \node[circle,draw,fill=cyan,scale=0.8] (V) at (-0.6,1.5) {$4$};
    \path[blue,thick] [-] (A) edge (C);
    \path[blue,thick] [-] (B) edge (C);
    \path[blue,thick] [-] (D) edge (C);
   \path[blue,thick] [-] (A) edge (V);
    \path[blue,thick] [-] (B) edge (V);
    \path[blue,thick] [-] (D) edge (V);
    
\node[circle,draw,fill=teal,scale=0.8] (S1) at (-5,-4) {$3$};
\node[circle,draw,fill=teal,scale=0.8] (S2) at (6,-4) {$3$};
\node[circle,draw,fill=teal,scale=0.8] (S3) at (4,-6) {$3$};
\node[circle,draw,fill=teal,scale=0.8] (S4) at (-7,-6) {$3$};
\path[red,thick,dashed] [-] (S1) edge (S2);
\path[red,thick,dashed] [-] (S1) edge (S3);
\path[red,thick,dashed] [-] (S1) edge (S4);
\path[red,thick,dashed] [-] (S3) edge (S2);
\path[red,thick,dashed] [-] (S4) edge (S2);
\path[red,thick,dashed] [-] (S3) edge (S4);

\path[blue,thick] [-] (C) edge (S1);
\path[blue,thick] [-] (C) edge (S2);
\path[blue,thick] [-] (C) edge (S3);
\path[blue,thick] [-] (C) edge (S4);

\node[circle,draw,fill=lime,scale=0.8] (T) at (-0.5,5) {$5$};
\path[blue,thick] [-] (T) edge (S1);
\path[blue,thick] [-] (T) edge (S2);
\path[blue,thick] [-] (T) edge (S3);
\path[blue,thick] [-] (T) edge (S4);
\path[blue,thick] [-] (T) edge (A);
\path[blue,thick] [-] (T) edge (B);
\path[blue,thick] [-] (T) edge (D);
\path[blue,thick] [-] (T) edge (V);
\end{tikzpicture}
\end{minipage}
\caption{A small network of groups}\label{fig:s4}
\end{figure}
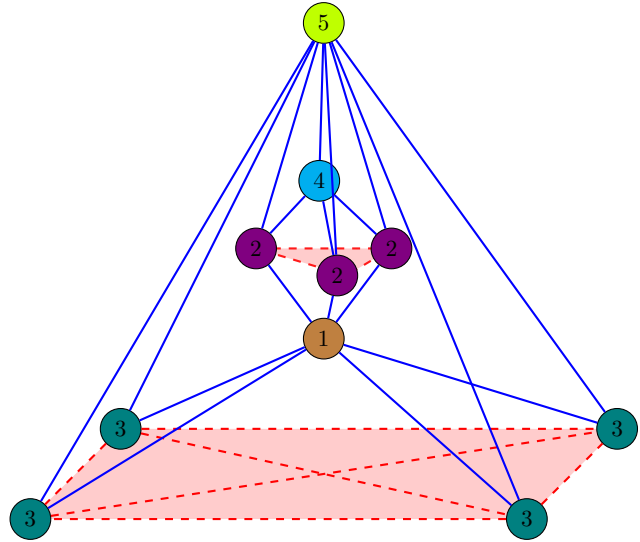
\end{ex}

A \emph{\textbf{path}} in the network $\mathcal{N}(\CK)$ is a finite sequence of edges such that any two consecutive edges share a common vertex. The \emph{\textbf{{\color{blue}{blue}}-length} of a path} is defined to be the number of its blue edges. Two algebras $\a{A}$ and $\a{B}$ are said to be \emph{\textbf{connected} in $\mathcal{N}(\CK)$} if there is a path connecting them. 

\begin{ex}If $\CK$ is the class of all finite fields, then two finite fields are connected in $\mathcal{N}(\CK)$ if{}f they have the same characteristic. 
\end{ex}

\begin{defn}
The \emph{\textbf{generator distance}} $\dis:\CK\times\CK\rightarrow \omega\cup\{\infty\}$ is defined on the class $\CK$ as follows: Let $\a{A}$ and $\a{B}$ be two algebras in $\CK$. 
\begin{itemize}
\item If $\a{A}$ and
$\a{B}$ are not connected in $\mathcal{N}(\CK)$, then
$\dis(\a{A},\a{B})\defeq \infty$. \item Otherwise, if $\a{A}$ and
$\a{B}$ are connected in $\mathcal{N}(\CK)$, then $$\dis(\a{A},\a{B})\defeq\min\left\{ \ l:\text{ there is a path of {\color{blue}{blue}}-length $l$ connecting $\a{A}$ and $\a{B}$ in the network $\mathcal{N}(\CK)$} \ \right\}.$$
\end{itemize}
\end{defn}

Another way to understand the above definition: Each {\color{blue}{blue}} edge represents an actual step, a step of adding or deleting a generator. The {\color{red}{red dashed}} edges represent null steps, steps that are not in our consideration; moving from an algebra to an isomorphic copy represents no real `algebraic' change. The generator distance then counts the minimum number of actual steps needed to reach an algebra starting from another one. 

\begin{ex}\label{ex-vectorspaces}
Suppose that $\CK$ is the class of all vector spaces over a given
field, then the generator distance between two vector spaces $V$ and
$W$ in $\CK$ is the following:
\begin{equation*}
  \dis(V, W)=\begin{cases}
    0 & \text{ if } V\cong W, \\
    |dim(V)-dim(W)| &  \text{ if $V$ and $W$ are both finite dimensional, and}\\
    \infty & \text{ otherwise}.
  \end{cases}
\end{equation*}
\end{ex}

\begin{ex}\label{ex:v4q8}
Suppose that $\CK$ is the class of all groups, and let $V_4$ and $Q_8$ denote the Klein four group and the quaternion group, respectively. It is obvious that $\dis(V_4,Q_8)\geq 2$ because none of these groups is isomorphic to or embeddable into the other one. We claim that the equality holds. To see this, let $G$ be the central product of the dihedral group $D_8$ and the cyclic group $Z_4$. It is not hard to check  that each one of $V_4$ and $Q_8$ is isomorphic to a large subgroup of $G$.
\end{ex}


\begin{prop}
The following are true for every $\a{A},\a{B},\a{C}\in \CK$:
\begin{enumerate}
\item $\dis(\a{A},\a{B})\geq 0$, and $\dis(\a{A},\a{B})=0\iff \a{A}\cong\a{B}$.
\item $\dis(\a{A},\a{B})=\dis(\a{B},\a{A})$.
\item $\dis(\a{A},\a{B})\leq \dis(\a{A},\a{C}) + \dis(\a{C},\a{B})$, where addition with $\infty$ is defined in the natural way.
\end{enumerate}
\end{prop}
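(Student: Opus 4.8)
The plan is to verify the three metric-like properties directly from the definition of $\dis$ in terms of paths in $\mathcal{N}(\CK)$, treating the connected and disconnected cases separately and letting the arithmetic conventions on $\infty$ absorb the latter.

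For part (1), nonnegativity is immediate since blue-lengths are cardinalities of sets of blue edges, hence in $\omega$, and $\infty \geq 0$ by convention. For the equivalence, if $\a{A}\cong\a{B}$ then they are joined by a single red dashed edge, a path of blue-length $0$, so $\dis(\a{A},\a{B})=0$. Conversely, if $\dis(\a{A},\a{B})=0$ then there is a path of blue-length $0$ connecting them, i.e.\ a path using only red dashed edges; since red dashed edges connect only isomorphic algebras and isomorphism is transitive, chaining along such a path yields $\a{A}\cong\a{B}$. (One should note the loops: every vertex has a red dashed loop, so even the empty path case $\a{A}=\a{B}$ is covered, and a path of blue-length $0$ between distinct vertices must traverse at least one genuine red edge.)

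For part (2), symmetry follows because both edge types in $\mathcal{N}(\CK)$ are undirected: red dashed edges connect isomorphic pairs (a symmetric relation), and blue edges connect $\a{A}$ and $\a{B}$ whenever \emph{one} of them is a large subalgebra of the other — a condition symmetric in $\a{A}$ and $\a{B}$ by construction. Hence reversing a path of blue-length $l$ from $\a{A}$ to $\a{B}$ gives a path of blue-length $l$ from $\a{B}$ to $\a{A}$, so the two minima (or both $\infty$, in the disconnected case, noting that connectedness is itself symmetric) agree.

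For part (3), the triangle inequality, the idea is path concatenation. If either $\dis(\a{A},\a{C})$ or $\dis(\a{C},\a{B})$ is $\infty$, the right-hand side is $\infty$ and there is nothing to prove; so assume both are finite, say realized by a path $P$ from $\a{A}$ to $\a{C}$ of blue-length $\dis(\a{A},\a{C})$ and a path $Q$ from $\a{C}$ to $\a{B}$ of blue-length $\dis(\a{C},\a{B})$. Their concatenation $P$ followed by $Q$ is a path from $\a{A}$ to $\a{B}$ (consecutive edges share a vertex, and at the join the shared vertex is $\a{C}$), witnessing that $\a{A}$ and $\a{B}$ are connected and that there is a path of blue-length $\dis(\a{A},\a{C})+\dis(\a{C},\a{B})$ between them; taking the minimum over all paths gives $\dis(\a{A},\a{B})\leq\dis(\a{A},\a{C})+\dis(\a{C},\a{B})$. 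None of this is deep — the only thing to be careful about is the bookkeeping with $\infty$, which the ``natural'' arithmetic convention ($n+\infty=\infty+n=\infty+\infty=\infty$) handles uniformly, and the observation that disconnection behaves consistently under concatenation (if $\a{A},\a{C}$ are connected and $\a{C},\a{B}$ are connected, then so are $\a{A},\a{B}$). I do not anticipate a genuine obstacle here; the mild subtlety is making the blue-length-$0$ argument in part~(1) fully rigorous via transitivity of $\cong$ along a red-only path.
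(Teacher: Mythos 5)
Your proof is correct and is exactly the routine verification the paper has in mind — the paper's own proof is simply the word ``Straightforward,'' and your path-concatenation and red-edge-chaining arguments (including the careful handling of loops, the blue-length-$0$ case, and the arithmetic of $\infty$) are the natural way to fill in those details.
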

\begin{proof}
Straightforward.
\end{proof}

The following simple observation is worthy of note. Suppose that $\cl{L}$ is a subclass of $\CK$. The values of the generator distance in $\cl{L}$ must take bigger or equal values to the corresponding values of the generator distance in $\CK$. This is true because every path in $\mathcal{N}(\cl{L})$ is a path in $\mathcal{N}(\CK)$, but the later network may contain extra paths that may reduce the values of the generator distance.

\begin{defn}
Let $\a{A}$ and $\a{B}$ be two algebras in $\CK$. We say that \emph{$\a{A}$ is \textbf{largely embeddable} into $\a{B}$ \textbf{in $\CK$}}, and we write $\a{A}\add\a{B}$ or $\a{B}\dda\a{A}$, if{}f there is $\a{C}\in\CK$ such that $\a{A}\cong\a{C}$ and $\a{C}$ is a large subalgebra in $\a{B}$. 
\end{defn}

There are subgroups that are
not large in $S_4$, but every non-trivial subgroup of $S_4$ is largely
embeddable into $S_4$, see Table~\ref{fig:symmetricgroup}. In \cite[Thm.A]{IZ95}, it was show that, if $n\neq 4$ and $x\in S_n$ not the identity element, then there is $y\in S_n$ such that $x$
and $y$ generates the whole $S_n$. Consequently, every non-trivial
subgroup of $S_n$ is large if $n\neq 4$.  Now, suppose that $\CK$ is the class of all groups, and let $G$ and $H$ be two finite groups.
\begin{itemize}
    \item The generator distance between $G$ and $H$ in $\CK$ is at most $2$. This is true because there is always a (large enough) natural number $n$ such that both $G$ and $H$ are embeddable into the symmetric group $S_n$. Whence, by \cite[Thm.A]{IZ95} and the triangle inequality, it follows that
    \begin{equation*}\label{eq:gptriangle}
  \dis(G,H)\leq \dis(G,S_n)+\dis(S_n,H)\leq 1+1=2.
\end{equation*}
This bound is sharp by Example~\ref{ex:v4q8}.
\item For the same reasons, one can see that the distance between $G$ and the trivial group is at most $3$. This bound is also sharp and that can be confirmed, for example, by checking that the quaternion group is of distance $3$ from
the trivial group.
\end{itemize}

In the illustrative diagrams of this section, a black edge between two algebras means that one of these algebras is largely embeddable into the other one. When we are interested in the direction of the large embedding (which one of the algebras is largely embeddable into the other one) we will use a black arrow instead of the black edge. Again, we discard all the loops.

Assume that the class $\CK$ is closed under the formation of subalgebras. Thus, the generator distance between two algebras is equal to the minimum length of all paths of black edges that connect these algebras. To find a path of black edges of length $k$, for some $k\in\omega$, we have $2^k$-many possible options for the arrows (regardless of the different options for the vertices); which makes the problem of determining the generator distance quite difficult. The following properties may reduce this difficulty substantially. Indeed, with these properties, one needs to consider only $k+1$-many options for the directions of the arrows to find a path of length $k$.

\begin{figure}[!ht]
\begin{minipage}{0.45\textwidth}
\centering
\begin{tikzpicture}[scale=0.7, >=latex]
            \centering
                \draw[->,thick] (1.5,-1) -- (0,1) ;
                \draw[->,thick] (1.5,-1) -- (3,1);

                \draw[->,thick,dashed] (0,1) -- (1.5,3);
                \draw[->,thick,dashed] (3,1) -- (1.5,3) node[black]{$\bullet$} node[black,above] {$\mathfrak{D}$};
                    \node[black] at (1.5,-1) {$\bullet$};
                    \node[black,below] at (1.5,-1) {$\mathfrak{C}$};
                    \node[black] at (0,1) {$\bullet$};
                    \node[black,left] at (0,1) {$\mathfrak{A}$};
                    \node[black] at (3,1) {$\bullet$};
                    \node[black,right] at (3,1) {$\mathfrak{B}$};
\end{tikzpicture}
\subcaption{Push-up property}\label{fig:pushup}
\end{minipage}
\begin{minipage}{0.45\textwidth}
\centering
\begin{tikzpicture}[scale=0.7, >=latex]
            \centering
                \draw[->,thick,dashed] (1.5,-1) -- (0,1) ;
                \draw[->,thick,dashed] (1.5,-1) -- (3,1);

                \draw[->,thick] (0,1) -- (1.5,3);
                \draw[->,thick] (3,1) -- (1.5,3) node[black]{$\bullet$} node[black,above] {$\mathfrak{D}$};
                    \node[black] at (1.5,-1) {$\bullet$};
                    \node[black,below] at (1.5,-1) {$\mathfrak{C}$};
                    \node[black] at (0,1) {$\bullet$};
                    \node[black,left] at (0,1) {$\mathfrak{A}$};
                    \node[black] at (3,1) {$\bullet$};
                    \node[black,right] at (3,1) {$\mathfrak{B}$};
\end{tikzpicture}
\subcaption{Push-down property}\label{fig:pushdown}
\end{minipage}
\caption{Pushing properties}\label{fig:pushing}
\end{figure}
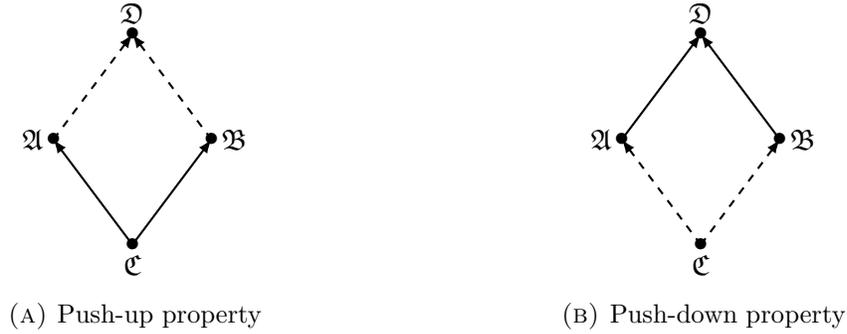

\begin{defn}
We say that the class $\CK$ has the
\emph{\textbf{push-up property}} if{}f for all
$\a{A},\a{B}\in\CK$, we have
  \begin{equation}\label{eq-amalg1}
    (\exists \a{C}\in\CK)\quad \a{A}\dda \a{C}\add
    \a{B}\ \implies\ (\exists \a{D}\in\CK)\quad \a{A}\add\a{D}\dda
    \a{B}.
  \end{equation}
We say that the class $\CK$ has the
\emph{\textbf{push-down property}} if{}f for all
$\a{A},\a{B}\in\CK$, we have
  \begin{equation}\label{eq-2amalg1}
    (\exists \a{D}\in\CK)\quad \a{A}\add\a{D}\dda
    \a{B}\ \implies\ (\exists \a{C}\in\CK)\quad \a{A}\dda\a{C}\add
    \a{B}.
  \end{equation}
\end{defn}

To state the next proposition, we will use the following
abbreviation: For any algebras $\a{A}$ and $\a{B}$ in the class $\CK$, and
$m\in\omega$, we write $\a{A}\addn{m}\a{B}$ if{}f there are
$\a{A}_0,\ldots,\a{A}_{m}$ such that
\begin{equation*}
\a{A}\cong\underbrace{\a{A}_0\add\a{A}_1\add\ldots
   \add \a{A}_{m}}_{m\text{-many arrows}}\cong\a{B}.
\end{equation*}

\begin{prop}\label{prop:pushpath}
Suppose that $\CK$ is closed under formation of subalgebras. Suppose
that $\a{A}$ and $\a{B}$ are two algebras connected in
$\mathcal{N}(\CK)$. The following are true:
\begin{enumerate}
\item If $\CK$ has the push-up property, then
  \begin{equation*}
 (\exists \a{D}\in\CK)
    (\exists n,m\in\omega) \ \ \ \a{A}\addn{m}\a{D},\ \a{B}\addn{n}\a{D} \text{ and } \dis(\a{A},\a{B})=n+m.
  \end{equation*}
\item If $\CK$ has the push-down property, then
   \begin{equation*}
 (\exists \a{C}\in\CK)
    (\exists n,m\in\omega) \ \ \ \a{C}\addn{m}\a{A},\ \a{C}\addn{n}\a{B} \text{ and } \dis(\a{A},\a{B})=n+m.
  \end{equation*}
\end{enumerate}
\end{prop}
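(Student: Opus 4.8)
The plan is to prove both parts simultaneously by a single ``straightening'' argument applied to a minimal-blue-length path, exploiting the symmetry that the push-down property is exactly the push-up property for the reversed relation. Since $\a{A}$ and $\a{B}$ are connected, fix a path realizing $\dis(\a{A},\a{B})=k$; because $\CK$ is closed under subalgebras, the red dashed edges only matter up to isomorphism and we may record the path as a zig-zag of blue edges between isomorphism types, i.e. a sequence of arrows each pointing either up ($\add$) or down ($\dda$), of total length $k$. The goal for part (1) is to show this zig-zag can be replaced by one of the shape ``all ups, then all downs'' with a common peak $\a{D}$, without increasing the number of blue edges; part (2) is the mirror image, producing a common valley $\a{C}$ with shape ``all downs, then all ups''.

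First I would set up the reduction to a normal form. Call a zig-zag in \emph{peak normal form} if it never contains the pattern $\add \dda$, equivalently if there is an index so that all arrows before it point up and all after it point down; such a path visibly witnesses $\a{A}\addn{m}\a{D}$ and $\a{B}\addn{n}\a{D}$ with $m+n=k$. The key local move: whenever the path contains a subpath $\a{X}\dda\a{Y}\add\a{Z}$ — here I am reading ``$\dda$ then $\add$'' as ``down into $\a{Y}$, then up out of $\a{Y}$'', which is precisely $\a{X}\dda\a{Y}\add\a{Z}$ meaning $\a{Y}\add\a{X}$ is false but $\a{X}$ sits below... let me be careful: the hypothesis of the push-up property \eqref{eq-amalg1} is literally $\a{A}\dda\a{C}\add\a{B}$, i.e. $\a{C}$ is largely embeddable into both $\a{A}$ and $\a{B}$ — so the ``valley'' pattern. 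So the local move is: a valley $\a{X}\dda\a{Y}\add\a{Z}$ can, using the push-up property, be replaced by a peak $\a{X}\add\a{W}\dda\a{Z}$ for some $\a{W}\in\CK$, keeping the two endpoints and keeping the blue-length equal to $2$. Iterating this move eliminates valleys; I would phrase the termination via a monovariant, e.g. the sum over all arrows of (position of arrow)$\times$(indicator that it points down), or more robustly, induct on the number of ``descents followed later by ascents'' — each application of the move strictly decreases the number of indices $i<j$ with arrow $i$ down and arrow $j$ up, while never increasing $k$. When no valley remains, the path is in peak normal form, giving the common $\a{D}$ and the equation $\dis(\a{A},\a{B})=n+m$ (the inequality $\le$ is the triangle inequality / the path itself, and $\ge$ holds because the straightened path still has blue-length $k=\dis(\a{A},\a{B})$, which is minimal). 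Part (2) follows by the same argument with every arrow reversed: the push-down property \eqref{eq-2amalg1} is the push-up property for $\dda$ in place of $\add$, peaks become valleys, and we obtain the common $\a{C}$.

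The main obstacle I anticipate is the bookkeeping around isomorphisms and the subalgebra-closure hypothesis: one must check that a genuine path in $\mathcal{N}(\CK)$ — which interleaves blue edges with red dashed edges in an arbitrary way — really does collapse to a clean zig-zag of $\add$/$\dda$ steps of the same blue-length, so that the local push move (which is stated purely in terms of $\add$ and $\dda$ between elements of $\CK$) can be applied. Concretely, consecutive blue edges separated by red dashed edges compose because $\a{X}\cong\a{X}'$ and $\a{X}'$-is-a-large-subalgebra-of-$\a{Y}$ together give $\a{X}\add\a{Y}$ by definition of $\add$, and if a blue edge runs ``downward'' from $\a{Y}$ to a large subalgebra isomorphic to $\a{X}$ we get $\a{Y}\dda\a{X}$; closure under subalgebras is what guarantees the intermediate algebras produced by the push moves actually lie in $\CK$ so the process stays inside the network. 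A secondary, minor point is ensuring the monovariant genuinely terminates rather than merely not increasing — I would pin this down by choosing the count of ``inversions'' (a down-arrow preceding an up-arrow, not necessarily adjacently) and verifying the move strictly drops it by at least one, which also automatically handles the case where the move creates a new adjacent valley further along.
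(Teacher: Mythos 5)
Your proposal is correct and follows essentially the same route as the paper: the paper's proof is a one-line ``straightforward induction on $\dis(\a{A},\a{B})$'' together with Figure~\ref{fig:pushpath}, which depicts exactly your straightening process --- a minimal zig-zag of large embeddings being pushed to a common peak $\a{D}$ (resp.\ valley $\a{C}$) by repeated applications of the pushing properties, with red edges absorbed into $\add$ via closure under subalgebras. Your inversion-counting monovariant is just a more explicit bookkeeping device for the same induction, and your identification of where subalgebra-closure is needed matches the paper's intent.
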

\begin{proof}
The statements follow from the definitions by straightforward induction on $\dis(\a{A},\a{B})$ which is finite because $\a{A}$ and $\a{B}$ are connected in $\mathcal{N}(\CK)$. See an illustrative proof in Figure~\ref{fig:pushpath}.\qedhere
\end{proof}

\begin{figure}[H]
    \centering
    \begin{tikzpicture}[scale=0.75, >=latex]
      \draw[blue!50, line width=3] (0,0) to node[black,below=-1,sloped]{$1$}
      (1,1) to node[black,above=-1,sloped]{$2$} (2,0) to
      node[black,above=-1,sloped]{$\dots$} (3,-1) to (4,0) to (5,-1) to (6,0)
      to (7,-1) to node[black,above=-1,sloped]{$\dots$} (8,0) to
      node[black,above=-1,sloped]{\tiny $\dis(\a{A},\a{B})$} (9,1);

      \foreach \x in {0,1,2,3,4}{
        \foreach \y in {0,1,2,3,4,5}{
          \ifnum \y=5 \else \draw[->]
          (\x+\y,\x-\y) -- (\x+\y+1,\x-\y+1);
          \fi
          \ifnum \x=4 \else
          \draw[->] (6+\x-\y,4-\x-\y)-- (5+\x-\y,5-\x-\y); \fi
        }
      }

  \node[below]  at (4,-4) {$\a{C}$};
      \node[above]  at (5,5) {$\a{D}$};
      \node[left] at (0,0){$\a{A}$};
      \node[right] at (9,1){$\a{B}$};

      \draw [decorate,decoration={brace,amplitude=10pt,raise=2}]
      (0,0) -- (5,5) node [black,midway,above left=7] 
            {$m$};

            \draw [decorate,decoration={brace,amplitude=10pt,raise=2}]
            (5,5) -- (9,1) node [black,midway,above right=7] 
                  {$n$};

        \draw [decorate,decoration={brace,mirror,amplitude=10pt,raise=2}] (0,0) -- (4,-4) node [black,midway,below left=8]       {$n$};

                    \draw [decorate,decoration={brace,amplitude=10pt,raise=2}]
     
                        (9,1) -- (4,-4) node [black,midway,below right=7] 
                              {$m$};
    \end{tikzpicture}
    \caption{Pushing a path upwards and/or downwards \label{fig:pushpath}}
  \end{figure}
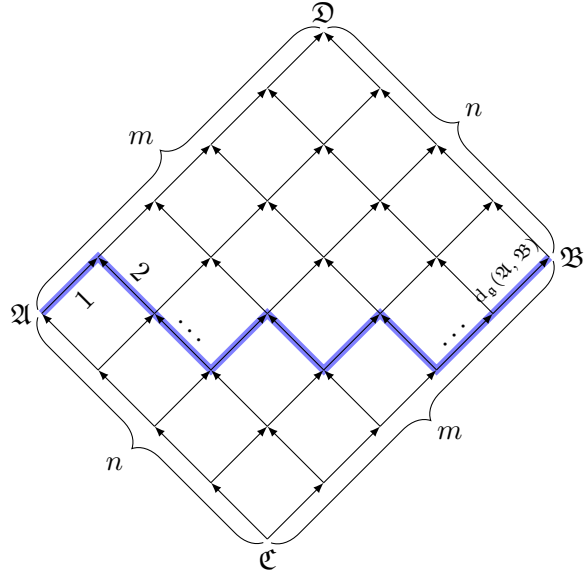

We recall an important property from the literature. The class $\CK$
is said to have \emph{the \textbf{amalgamation property}} if{}f for
every $\a{A},\a{B},\a{C}\in\CK$ and every embeddings
$f:\a{C}\rightarrow\a{A}$ and $g:\a{C}\rightarrow\a{B}$ there is
$\a{D}\in\CK$ and embeddings $f':\a{A}\rightarrow\a{D}$ and
$g':\a{B}\rightarrow\a{D}$ such that $f'\circ f=g'\circ g$. For more
on different versions of the amalgamation property and their
applications, we refer the reader to \cite{KMPT83}.

One should not confuse the push-up property with the amalgamation property. The arrows in Figure~\ref{fig:pushup} represent large embeddings, not arbitrary embeddings, and the commutativity of the diagram in Figure~\ref{fig:pushup} is not required. These are the exact differences between the push-up property and the amalgamation property.

\begin{prop}\label{prop:ap->pup}
Suppose that $\CK$ is closed under the formation of subalgebras, then
$$\CK\text{ has the amalgamation property }\implies \CK \text{ has the push-up property}.$$
\end{prop}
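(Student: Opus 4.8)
The plan is to take a witness for the hypothesis of the push-up property and feed it into the amalgamation property, then extract the required large embeddings from the amalgam together with closure under subalgebras. Concretely, suppose $\a{A},\a{B}\in\CK$ and there is $\a{C}\in\CK$ with $\a{A}\dda\a{C}\add\a{B}$. Unwinding the definition of $\add$, there are $\a{C}_1,\a{C}_2\in\CK$ with $\a{C}_1\cong\a{C}\cong\a{C}_2$, with $\a{C}_1$ a large subalgebra of $\a{A}$, and with $\a{C}_2$ a large subalgebra of $\a{B}$. Composing the relevant isomorphism with the inclusions gives embeddings $f\colon\a{C}\to\a{A}$ and $g\colon\a{C}\to\a{B}$. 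By the amalgamation property there is $\a{D}_0\in\CK$ and embeddings $f'\colon\a{A}\to\a{D}_0$ and $g'\colon\a{B}\to\a{D}_0$ with $f'\circ f=g'\circ g$.

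Next I would pass to the relevant subalgebra of $\a{D}_0$. Since $\a{C}_1$ is large in $\a{A}$, fix $a\in A$ with $\langle C_1\cup\{a\}\rangle=\a{A}$; likewise fix $b\in B$ with $\langle C_2\cup\{b\}\rangle=\a{B}$. Let $\a{D}\defeq\langle f'[A]\cup g'[B]\rangle$, which lies in $\CK$ because $\CK$ is closed under subalgebras. I claim $f'[\a{A}]$ is large in $\a{D}$, witnessed by the element $g'(b)$: indeed $f'[A]\cup\{g'(b)\}$ generates $g'[C_2\cup\{b\}]=g'[B]$ (using $g'\circ g=f'\circ f$, so that $g'[g[C]]=f'[f[C]]\subseteq f'[A]$, hence $g'[C_2]\subseteq f'[A]$ after identifying $\a C_2$ with $g[\a C]$), and it also contains $f'[A]$, so it generates $\langle f'[A]\cup g'[B]\rangle=\a{D}$. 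Symmetrically $g'[\a{B}]$ is large in $\a{D}$, witnessed by $f'(a)$. Since $\a{A}\cong f'[\a{A}]$ and $\a{B}\cong g'[\a{B}]$, this gives $\a{A}\add\a{D}\dda\a{B}$, which is the conclusion.

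The one point that needs care — and which I expect to be the only real obstacle — is the bookkeeping identifying the image of $\a{C}$ inside $f'[\a{A}]$ with its image inside $g'[\a{B}]$: one must genuinely use the commutativity $f'\circ f=g'\circ g$ to see that $g'[C_2]$, the copy of $\a{C}$ sitting inside $g'[\a{B}]$, is actually contained in $f'[A]$, so that adjoining the single element $g'(b)$ to $f'[A]$ suffices to reach all of $g'[\a{B}]$. Once that identification is in hand, everything else is formal: the generation claims are immediate from the definitions of ``large'' and of $\a{D}$, and closure under subalgebras is exactly what guarantees $\a{D}\in\CK$. Note that neither the commutativity of the diagram in Figure~\ref{fig:pushup} nor the large-embedding strength of the original $f,g$ is needed for the conclusion — only that $\a{C}_1,\a{C}_2$ were large in $\a{A},\a{B}$ respectively — which matches the remark preceding the proposition that amalgamation is strictly more than what push-up requires.
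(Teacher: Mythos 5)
Your proof is correct and takes essentially the same route as the paper: amalgamate over $\a{C}$, pass to the subalgebra generated by the two images, and witness largeness of $f'[\a{A}]$ by $g'(b)$ and of $g'[\a{B}]$ by $f'(a)$, using the commutativity $f'\circ f=g'\circ g$ to identify the two copies of $\a{C}$. Your $\a{D}=\langle f'[A]\cup g'[B]\rangle$ is exactly the paper's $\a{D}'=\langle X\cup\{f'(a),g'(b)\}\rangle$, with $f'[\a{A}]$ and $g'[\a{B}]$ playing the roles of the paper's $\a{D}_1$ and $\a{D}_2$.
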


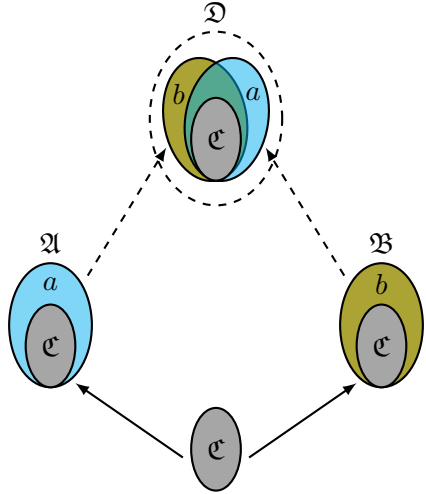
\begin{figure}[!ht]
        \begin{tikzpicture}[scale=1.1, >=latex]
            \centering
            \node[above] at (0,1.05) {$\mathfrak{D}$};
            \draw[thick,dashed] (0,0) ellipse (0.8 cm and 1.05cm);
            \draw[thick,rotate around={10:(0,-0.75)},fill=olive,fill opacity=0.8] (0,0) ellipse (0.5cm and 0.75cm);
            \draw[thick,rotate around={350:(0,-0.75)},fill=cyan,fill opacity=0.5] (0,0) ellipse (0.5cm and 0.75cm);
            \draw[thick,fill=gray!70] (0,-0.25) ellipse (0.3cm and 0.5cm);
            \node at (0,-0.25) {$\mathfrak{C}$};
            \node at (0.45,0.3) {$a$};
            \node at (-0.45,0.3) {$b$};
            \node[above] at (-2,-1.75) {$\mathfrak{A}$};
            \draw[thick,draw,fill=cyan,fill opacity=0.5] (-2,-2.5) ellipse (0.5cm and 0.75cm);
            \draw[thick,draw,fill=gray!70] (-2,-2.75) ellipse (0.3cm and 0.5cm);
            \node at (-2,-2) {$a$};
            \node at (-2,-2.75) {$\mathfrak{C}$};
            \node[above] at (2,-1.75) {$\mathfrak{B}$};
            \draw[thick,draw,fill=olive,fill opacity=0.8] (2,-2.5) ellipse (0.5cm and 0.75cm);
            \draw[thick,draw,fill=gray!70] (2,-2.75) ellipse (0.3cm and 0.5cm);
            \node at (2,-2) {$b$};
            \node at (2,-2.75) {$\mathfrak{C}$};
\draw[thick,fill=gray!70] (0,-4) ellipse (0.3cm and 0.5cm);
            \node at (0,-4) {$\mathfrak{C}$};
                \draw[thick,->] (0.4,-4.1) -- (1.7,-3.2);
                \draw[thick,->] (-0.4,-4.1) -- (-1.7,-3.2);
                 \draw[thick,->,dashed] (-1.55,-1.9) -- (-0.6,-0.35);
                \draw[thick,->,dashed] (1.55,-1.9) -- (0.6,-0.35);
        \end{tikzpicture}
\caption{Amalgamation property $\implies$ push-up property}\label{apimpliespup}
\end{figure}

\begin{proof}
Assume that $\CK$ has the amalgamation property. We need to show that $\CK$ has the push-up property. To this end, let $\a{A},\a{B},\a{C}\in\CK$ and suppose that $\a{C}$ is largely embeddable into both $\a{A}$ and $\a{B}$. Thus, there are two embeddings $f:\a{C}\rightarrow\a{A}$ and $g:\a{C}\rightarrow\a{B}$, $a\in A$ and $b\in B$ such that 
\begin{equation*}
\a{A}=\langle f[C]\cup\{a\}\rangle \ \ \ \text{ and } \ \ \ \a{B}=\langle g[C]\cup\{b\}\rangle.\footnote{Here, $f[X]$ denotes the $f$-image of set $X$, \ie $f[X]=\{f(x): x\in X\}$.}
\end{equation*}
Now, by the amalgamation property, there is an algebra $\a{D}\in\CK$ and embeddings $f':\a{A}\rightarrow\a{D}$ and $g':\a{B}\rightarrow\a{D}$ such that $f'\left[f[C]\right]=g'\left[g[C]\right]\defeq X$. Consider the following subalgebras of $\a{D}$:
\begin{equation*}
\a{D}_1=\langle X \cup\{f'(a)\}\rangle, \ \ \a{D}_2=\langle X \cup\{g'(b)\}\rangle \ \ \text{ and } \ \ \a{D}'=\langle X \cup\{f'(a),g'(b)\}\rangle.
\end{equation*}
Since $\CK$ is closed under formation of subalgebras, we guarantee that $\a{D}_1$, $\a{D}_2$ and $\a{D}'$ are all members of $\CK$. It is not hard to see that 
$\a{A}\cong\a{D}_1$ and $\a{B}\cong\a{D}_2$, and both $\a{D}_1$ and $\a{D}_2$ are large subalgebras in $\a{D}'$. Therefore, we conclude that the class $\CK$ has the push-up property. See Figure~\ref{apimpliespup}.
\end{proof}

There are several classes of algebras that have the amalgamation
property, and hence the push-up property, such as groups
\cite{S27,N48}, (distributive) lattices \cite{J57,GJL73,P68}, Boolean
algebras \cite{DY63}, some classes of Boolean algebras with operators
\cite{MSA07,N85,P71}. Thus, the difficulty of the problem of
determining the values of the generator distance in these classes can
be reduced. In what follows, we give an example of how
Proposition~\ref{prop:pushpath} and Proposition~\ref{prop:ap->pup} can
be effectively used in this direction. We also note that some other
classes do not have the amalgamation property, such as semigroups
\cite[section 9.4]{Clifford-Preston67} and modular lattices
\cite{jonsson71,GJL73}, and so Proposition~\ref{prop:ap->pup} cannot
be useful in these cases.

Consider the group $G\defeq\bigslant{\mathbb{Q}}{\mathbb{Z}}$ and suppose that $\CK$ is the class of all subgroups of this group. It is known that $G$ is isomorphic to the torsion subgroup of the unit circle. In other words,  $G$ is isomorphic to the group that consists of all $p^n$-th complex roots of the unit, for each prime number $p$ and each $n\in\omega$, and whose operation is the multiplication of complex numbers. See Figure~\ref{fig:unit circle}. The group $G$ has many interesting properties, we list some of them below.  

 \begin{figure}[!ht]
            \begin{tikzpicture}[scale=0.8]
                \pgfmathsetmacro{\r}{2} 
                \pgfmathsetmacro{\l}{2.4} 

                \draw[gray]  (-\l,0) to (\l,0) (0,-\l) to (0,\l);

                \draw[magenta, ultra thick] (0,0) circle [radius=\r];

                \foreach \x in {0,1,...,7}{
                    \node[circle, fill,inner sep=2] (\x) at (45*\x:\r) {};
                }

                \node[below right] at (0) {$z^0=1$};
                \node[above right] at (1) {$z^1$};
                \node[above left] at (2) {$z^2$};
                \node[above left] at (3) {$z^3$};
                \node[below left] at (4) {$z^4$};
                \node[below left] at (5) {$z^5$};
\node[below right] at (6) {$z^6$};
\node[below right] at (7) {$z^7$};
                \draw[blue,ultra thick] (0) to (1) to (2) to (3) to (4) to (5) to (6) to (7) to (0) ;
            \end{tikzpicture}
\caption{The $2^3$-th complex roots of the unit}\label{fig:unit circle}
           \end{figure}
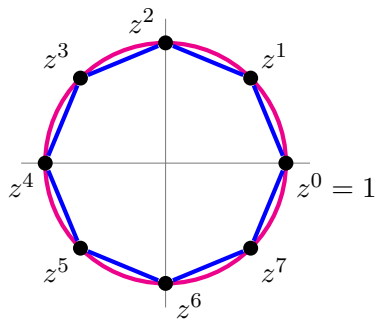

\begin{enumerate}
\item[(A)] $G$ is Abelian and torsion. The later means that each element of $G$ has a finite order.
\item[(B)] Every finitely generated subgroup of $G$ is cyclic.
\item[(C)] Any two isomorphic subgroups of $G$ must be equal. Thus, $\CK$ has the amalgamation property, since the group $G$ itself can be always chosen as the amalgamating algebra.
\item[(D)] Let $p$ be a prime number and let $n\in\omega$. The group $G$ contains a cyclic group of order $p^n$, denoted by $Z(p^n)$. These cyclic subgroups form a chain whose limit gives another important subgroup of $G$. This subgroup is called the Pr\"ufer $p$-group and is denoted by $Z(p^{\infty})$.
$$Z(p^0)\subseteq Z(p^1)\subseteq Z(p^2)\subseteq Z(p^3)\subseteq\cdots \hspace{1cm} \subseteq Z(p^{\infty}).$$ 
\end{enumerate}

Let $p_0,p_1,\ldots,p_j,\ldots$ be an enumeration of the primes in their natural order. Consider an infinite sequence of the form $\bar{k}=(k_0,k_1,\ldots,k_j,\ldots)$ where each $k_i$ is either a non-negative integer or the symbol $\infty$. We call such a sequence a \emph{\textbf{choice sequence}}. Let 
$$\langle \bar{k}\rangle=\left\{g_1g_2\cdots g_m\in G \ \bigg\vert \ 0\not=m\in\omega\text{ and }\{g_1,g_2,\ldots,g_m\}\subseteq\bigcup_{i=0}^{\infty} Z(p_i^{k_i})\right\}.$$
Obviously, $\langle\bar{k}\rangle$ is a subgroup of $G$. Moreover, it is not hard to see that the following is true, \cf \cite{BEAUMONT-ZUCKERMAN51}. 
\begin{enumerate}
\item[(E)] Every subgroup of $G$ is equal to $\langle\bar{k}\rangle$ for some choice sequence $\bar{k}=(k_0,k_1,\ldots,k_j,\ldots)$. 
\end{enumerate}

Let $\bar{k}$ and $\bar{k}'$ be two choice sequences. We need the following notations.
\begin{itemize}
\item We write $\bar{k}\equiv\bar{k}'$ if{}f the following two conditions are satisfied: 
\begin{enumerate}
\item For each $ i\in\omega$, $k_i=\infty\iff k'_i=\infty$, and
\item the set $\{i\in\omega\ \vert \ k_i\not=k'_i\}$ is finite. 
\end{enumerate}
\item We write $\bar{k}\preceq\bar{k}'$ if{}f (1) $\bar{k}\equiv\bar{k}'$ and (2) $k_i\leq k'_i$, for each $i\in\omega$.
\end{itemize}

Now, we are ready to give a complete characterization of the network $\mathcal{N}(\CK)$ of large embeddings and the generator distance in the class $\CK$ of all subgroups of $G=\bigslant{\mathbb{Q}}{\mathbb{Z}}$.

\begin{lem}\label{lem:q/z}
Let $\bar{k}$ and $\bar{k}'$ be two choice sequences.
\begin{enumerate}
\item If $\bar{k}=\bar{k}'$, then $\dis\left(\langle\bar{k}\rangle,\langle\bar{k}'\rangle\right)=0$.
\item If $\bar{k}\preceq\bar{k}'$ and $\bar{k}\not=\bar{k}'$, then $\dis\left(\langle\bar{k}\rangle,\langle\bar{k}'\rangle\right)=1$.
\item If $\bar{k}\equiv\bar{k}'$, $\bar{k}\npreceq\bar{k}'$ and $\bar{k}'\npreceq\bar{k}$, then $\dis\left(\langle\bar{k}\rangle,\langle\bar{k}'\rangle\right)=2$.
\item If $\bar{k}\not\equiv\bar{k}'$, then $\dis\left(\langle\bar{k}\rangle,\langle\bar{k}'\rangle\right)=\infty$.
\end{enumerate}
\end{lem}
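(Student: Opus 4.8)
The plan is to verify the four cases in increasing order of difficulty, using facts (B)--(E) about $G=\bigslant{\mathbb{Q}}{\mathbb{Z}}$ together with the push-path machinery. Throughout, I will freely use fact (C), which tells us that the map $\bar{k}\mapsto\langle\bar{k}\rangle$ sends isomorphic subgroups to \emph{equal} subgroups; hence in $\mathcal{N}(\CK)$ every red dashed edge is a loop, and $\dis(\langle\bar{k}\rangle,\langle\bar{k}'\rangle)=0$ precisely when $\langle\bar{k}\rangle=\langle\bar{k}'\rangle$. Case (1) is then immediate. For the remaining cases the key translation is: a large extension $\a{A}\add\a{B}$ in $\CK$ means $\langle\bar{k}\rangle\subseteq\langle\bar{k}'\rangle$ (after identifying with the unique isomorphic copies, using (C)) with $\langle\bar{k}'\rangle=\langle\bar{k}\cup\{b\}\rangle$ for a single $b\in G$. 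Since $b$ has finite order, say $b\in Z(p_i^{n})$ for some prime $p_i$ and some $n$, adding $b$ can only change a single coordinate $k_i$, and only by raising it to some finite value $\ge n$ while keeping the others fixed. This shows that a single blue edge corresponds exactly to passing between $\langle\bar{k}\rangle$ and $\langle\bar{k}'\rangle$ where $\bar{k},\bar{k}'$ agree in all but one coordinate and that coordinate is finite in the smaller one.

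Case (2): if $\bar{k}\preceq\bar{k}'$ and $\bar{k}\ne\bar{k}'$, I would first check $\dis\ge 1$ (the subgroups are distinct by (C)), then produce an explicit generator. Because $\bar{k}\equiv\bar{k}'$, the two sequences differ in only finitely many coordinates $i_1,\dots,i_r$, all with $k_{i_j}<k'_{i_j}<\infty$; choosing in each $Z(p_{i_j}^{k'_{i_j}})$ a generator $b_j$ and setting $b=b_1\cdots b_r$ (a product of elements of coprime-power order, so of order $\prod p_{i_j}^{k'_{i_j}}$), one checks $\langle\langle\bar{k}\rangle\cup\{b\}\rangle=\langle\bar{k}'\rangle$ directly from the definition of $\langle\bar{k}'\rangle$ and the chain structure in (D). Hence $\langle\bar{k}\rangle\add\langle\bar{k}'\rangle$ and $\dis=1$. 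Case (4): if $\bar{k}\not\equiv\bar{k}'$, then either some coordinate is $\infty$ in one sequence and finite in the other, or infinitely many coordinates differ; in either situation no finite sequence of single-coordinate finite modifications can turn $\bar{k}$ into $\bar{k}'$, so there is no path of blue edges at all and $\dis=\infty$. I would phrase this as: any blue path yields a finite sequence of choice sequences, consecutive ones related by a single finite-coordinate change, so the endpoints must be $\equiv$-equivalent; contrapositive gives the claim.

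Case (3) is the substantive one. Here $\bar{k}\equiv\bar{k}'$ but neither dominates the other, so there exist coordinates $i$ with $k_i<k'_i$ and coordinates $j$ with $k_j>k'_j$; in particular $\langle\bar{k}\rangle$ is neither a large sub- nor large supergroup of $\langle\bar{k}'\rangle$ (indeed not even a sub/supergroup at all), so $\dis\ge 2$. For the upper bound I would define $\bar{m}$ by $m_i=\max(k_i,k'_i)$ for all $i$ (equivalently $\bar{k},\bar{k}'\preceq\bar{m}$, and $\bar{m}\equiv\bar{k}\equiv\bar{k}'$ since only finitely many coordinates are involved and none of them is $\infty$). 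By Case (2), $\langle\bar{k}\rangle\add\langle\bar{m}\rangle\dda\langle\bar{k}'\rangle$, giving a blue path of length $2$; hence $\dis=2$. Alternatively, one can obtain the path via $\bar{m}'$ with $m'_i=\min(k_i,k'_i)$, using that $\CK$ has the amalgamation property by (C) and then invoking Proposition~\ref{prop:ap->pup} and Proposition~\ref{prop:pushpath} to organize the argument, but the direct construction is cleaner.

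The main obstacle, and the only place demanding care, is the single-edge analysis underlying Cases (2)--(4): showing precisely that $\langle\langle\bar{k}\rangle\cup\{b\}\rangle=\langle\bar{k}'\rangle$ forces $\bar{k}$ and $\bar{k}'$ to agree outside one coordinate with that coordinate finite in the smaller sequence. This rests on two facts about $G$: every element has finite prime-power-supported order, so $b$ lives in finitely many of the $Z(p_i^{\infty})$, and adjoining a finite-order element to a subgroup of the form $\langle\bar{k}\rangle$ produces again such a subgroup, computed coordinatewise via the chain (D). Once this lemma is in hand, all four cases fall out by the elementary combinatorics of the relations $\preceq$ and $\equiv$ on choice sequences.
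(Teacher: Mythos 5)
Your items (1)--(3) follow the paper's proof essentially verbatim (equality of isomorphic subgroups via (C); the explicit generator $b=b_1\cdots b_r$ for $\preceq$; the coordinatewise maximum $\bar{m}$ plus the triangle inequality for item (3)). Where you genuinely diverge is item (4): the paper invokes the amalgamation property, Proposition~\ref{prop:ap->pup} and Proposition~\ref{prop:pushpath} to reduce a hypothetical finite path to a wedge $\langle\bar{k}\rangle\addn{m}H\dda[\kern-2pt]\ldots$, and then derives a contradiction by analysing element orders in the internal product of $\langle\bar{k}'\rangle$ with a cyclic group. You instead characterize a single blue edge combinatorially (a large inclusion $\langle\bar{l}\rangle\subseteq\langle\bar{l}'\rangle$ forces $\bar{l}\preceq\bar{l}'$, because the adjoined generator has finite order) and observe that $\preceq$ refines $\equiv$, so the endpoints of any blue path are $\equiv$-equivalent. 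This is a legitimately different and, in my view, cleaner route: it avoids the push-up machinery entirely and makes transparent that $\equiv$ is exactly the connectivity relation of $\mathcal{N}(\CK)$. What the paper's heavier argument buys is a worked illustration of how Propositions~\ref{prop:ap->pup} and~\ref{prop:pushpath} are used in practice, which is part of the expository point of that section.

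One thing you must fix: your assertion that a finite-order element $b$ lies in a single $Z(p_i^{n})$, so that ``adding $b$ can only change a single coordinate $k_i$,'' is false. An element such as $\frac{1}{6}+\mathbb{Z}$ has order $6$ and its adjunction raises both the $p=2$ and the $p=3$ coordinates; indeed your own Case (2) (and the paper's) adjoins a product $b=b_1\cdots b_r$ that changes $r$ coordinates in one blue step, so the ``exactly one coordinate'' description of a blue edge contradicts your own upper bound for item (2). The correct single-edge lemma, which your closing paragraph in fact gestures at, is: $\langle\bar{l}\rangle$ is a large subgroup of $\langle\bar{l}'\rangle$ if and only if $\bar{l}\preceq\bar{l}'$, i.e.\ the two sequences differ in only finitely many coordinates, each raised to a \emph{finite} value (the order of $b$ has only finitely many prime divisors, and no finite coordinate can be pushed to $\infty$ by one finite-order generator). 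With that corrected statement, your path argument for item (4) goes through unchanged, since $\preceq$ still implies $\equiv$ and $\equiv$ is transitive; you should also note explicitly that the choice sequence representing a given subgroup is unique (as $\langle\bar{l}\rangle\cap Z(p_i^{\infty})=Z(p_i^{l_i})$), so that ``generates $\langle\bar{k}'\rangle$'' really pins down $\bar{k}'$ coordinatewise.
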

\begin{proof}
Let $\bar{k}$ and $\bar{k}'$ be two choice sequences.
\begin{enumerate}
    \item If $\bar{k}=\bar{k}'$, then $\langle\bar{k}\rangle=\langle\bar{k}'\rangle$ and consequently $\dis\left(\langle\bar{k}\rangle,\langle\bar{k}'\rangle\right)=0$.
    \item Suppose that $\bar{k}\preceq\bar{k}'$ and $\bar{k}\not=\bar{k}'$. Then, $\langle\bar{k}\rangle\subseteq \langle\bar{k}'\rangle$, indeed, for each $i\in\omega$, $Z(p^{k_i})\subseteq Z(p^{k'_i})$. Remember, the set $\Delta\defeq \{i\in\omega\ \vert \ k_i\not=k'_i\}$ is finite. For each $j\in \Delta$, let $a_j$ be a generator of the group $Z(p^{k'_j})$. Let
    $$a\defeq \prod_{j\in \Delta} a_j.$$
    Now, it is not hard to see that adding $a$ to $\langle\bar{k}\rangle$ would generate the bigger group $\langle\bar{k}'\rangle$, which means that $\langle\bar{k}\rangle$ is a large subgroup in $\langle\bar{k}'\rangle$ and the desired follows.
    \item Suppose that $\bar{k}\equiv\bar{k}'$, $\bar{k}\npreceq\bar{k}'$ and $\bar{k}'\npreceq\bar{k}$. That means there are $i,j\in\omega$ such that $k_i< k'_i$ and $k_j> k'_j$. Thus, one can find an element of order $p^{k_j}$ in $\langle \bar{k}\rangle$ but not in $\langle \bar{k}'\rangle$. Similarly, one can find an element of order $p^{k'_i}$ in $\langle \bar{k}'\rangle$ but not in $\langle \bar{k}\rangle$. Hence, none of the groups $\langle \bar{k}\rangle$ and $\langle \bar{k}'\rangle$ cannot be embeddable into the other one. That implies $\dis\left(\langle\bar{k}\rangle,\langle\bar{k}'\rangle\right)\geq 2$. On the other hand, the set $\{i\in\omega\ \vert \ k_i\not=k'_i\}$ is finite. Define a choice sequence $\bar{l}=(l_0,l_1,\ldots)$ as follows: for each $i\in\omega$, let $l_i=\max\{k_i,k'_i\}$. Therefore, by the triangle inequality and item (2) above,
$$\dis\left(\langle\bar{k}\rangle,\langle\bar{k}'\rangle\right)\leq \dis\left(\langle\bar{k}\rangle,\langle\bar{l}\rangle\right)+\dis\left(\langle\bar{l}\rangle,\langle\bar{k}'\rangle\right)=1+1=2.$$
\item Suppose that $\bar{k}\not\equiv\bar{k}'$, then we have two cases.
\begin{enumerate}
    \item[Case 1:] Suppose that there is an $i\in\omega$ such that $k_i=\infty$ but $k'_i\not=\infty$. Assume towards a contradiction that $\dis\left(\langle\bar{k}\rangle,\langle\bar{k}'\rangle\right)<\infty$. By property (C) above, we know that the class $\CK$ has the amalgamation property, and whence, by Proposition~\ref{prop:ap->pup}, $\CK$ has the push-up property too. So, by Proposition~\ref{prop:pushpath}, there are $n,m\in\omega$ and a group $H\subseteq G$ such that $\langle\bar{k}\rangle\addn{m}H$, $\langle\bar{k}'\rangle\addn{n}H$ and $\dis\left(\langle\bar{k}\rangle,\langle\bar{k}'\rangle\right)=n+m$. We also note that $n\not=0$ since the group $\langle\bar{k}\rangle$ cannot be embedded into the other group $\langle\bar{k}'\rangle$. Now we consider the embedding $\langle\bar{k}'\rangle\addn{n}H$. 
    
    Note that property (C) implies that the group $\langle\bar{k}'\rangle$ is actually a subgroup of $H$. Moreover, there are $x_1,\ldots,x_n\in G$ such that $\langle\bar{k}'\rangle$ together with $x_1,\ldots,x_n$ generates the whole $H$. We can assume that none of these elements is a member of $\langle\bar{k}'\rangle$; otherwise we would get $\dis\left(\langle\bar{k}\rangle,\langle\bar{k}'\rangle\right)<n+m$ which is a contradiction. By the commutativity of $G$, it follows that $H$ is the internal product of $\langle\bar{k}'\rangle$ and the subgroup $\langle x_1,\ldots,x_n\rangle$ generated by $x_1,\ldots,x_n$. Now, the property $(B)$ implies that $\langle x_1,\ldots,x_n\rangle$ is a cyclic group. Thus, there is an element $x\in G$ such that $H$ is the internal product of $\langle\bar{k}'\rangle$ and the cyclic group $\langle x\rangle$ generated by the element $x$. Let
    $$l=\max\left(\{k'_i\}\cup\{s:p_i^s\text{ divides the order of }x\}\right).$$
    Since all these groups are Abelian, the order of every element in $H$ is equal to the least common multiple of the orders of an element in $\langle\bar{k}'\rangle$ and an element in $\langle x\rangle$. Thus, $H$ cannot contain an element of order $p_i^{l+1}$. But $\langle\bar{k}\rangle$ contains an element of order $p_i^{l+1}$. This contradicts the embedding $\langle\bar{k}\rangle\addn{m}H$. Whence, the desired follows.
  \item[Case 2:] Suppose that $\{i\in\omega \ \vert
    \ k_i\not=k'_i\}$ is infinite. Without loosing generality, we can
    assume that $k_i>k_i'$ holds for infinitely many
    $i\in\omega$. By the same argument (by contradiction) as in
    Case 1, there has to be an element $x\in G$ such that
    $\langle\bar{k}\rangle\addn{m}H$ holds for the internal product
    $H$ of $\langle\bar{k}'\rangle$ and $\langle x\rangle$. From the
    infinitely many, there has to be one, say $j$, such that
    $k_j>k_j'$ and the order of $x$ is not divisible by
    $p_j$. Therefore, $H$ cannot contain an element of order
    $p_j^{k_j}$ contradicting that $\langle\bar{k}\rangle$ can be
    embedded to $H$. \qedhere
\end{enumerate}
\end{enumerate}
\end{proof}

We note that the relation $\equiv$ is an equivalence relation on the set of all choice sequences. Let $\bar{k}$ be a choice sequence and let 
$$\CK(\bar{k})=\left\{\langle \bar{k}'\rangle\subseteq G \ \big\vert \  \bar{k}'\equiv\bar{k}\right\}.$$
By Lemma~\ref{lem:q/z}, the values of the generator distance between the members of $\CK(\bar{k})$ and each other must be finite, while the generator distance between a subgroup in $\CK(\bar{k})$ and another subgroup outside $\CK(\bar{k})$ is $\infty$. That makes the subclass $\CK(\bar{k})$ a connected component of $\mathcal{N}(\CK)$. By \emph{a \textbf{connected component} of a network of large embeddings}, we mean a maximal subclass in which any two algebras are connected in the network. 

\begin{figure}[!ht]
\centering
\begin{tikzpicture}[line cap = round, line join = round,]
\draw (-1,-1) -- (5,-1) -- (5,2.5) -- (-1,2.5) -- (-1,-1);
\draw (0,0) node {$\bullet$};
\draw (1,0) node {$\bullet$};
\draw (0,0) -- (1,0);
\draw (0.5,0) ellipse (1 and 0.5);
\draw (0.5,1.5) ellipse (1 and 0.5);
\draw (3.5,1.5) ellipse (1 and 0.5);
\draw (3,1.3) node {$\bullet$};
\draw (3.5,0) ellipse (1 and 0.5);
\path[every node/.style={sloped,anchor=south,auto=false},dashed]
        (1,0) edge              node {\tiny{infinite}} (3,1.3) ;
 \path[every node/.style={sloped, anchor=south,auto=false}]
        (0,0) edge              node {\tiny{finite}} (1,0) ;
\end{tikzpicture}
\caption{Connected components of a network}
\end{figure}
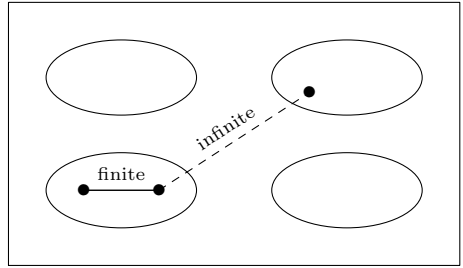

\emph{The \textbf{diameter} of a connected component} of a network of large embeddings is defined to be the smallest $n\in\omega\cup\{\infty\}$ for which $\dis(\a{A},\a{B})\leq n$ for all $\a{A}$ and $\a{B}$ in the component. So, if we assume again that $\CK$ is the class of all subgroups of $G=\bigslant{\mathbb{Q}}{\mathbb{Z}}$, then here are the diameters of the components of the network $\mathcal{N}(\CK)$: Let $\bar{k}=(k_0,k_1,\ldots,k_j,\ldots)$ be a choice sequence.
\begin{itemize}
\item If $\left\vert\left\{i\in\omega \ \big\vert \ k_i\not=\infty\right\}\right\vert=0$, then the diameter of $\CK(\bar{k})$ is $0$.
\item If $\left\vert\left\{i\in\omega \ \big\vert \ k_i\not=\infty\right\}\right\vert=1$, then the diameter of $\CK(\bar{k})$ is $1$.
\item If $\left\vert\left\{i\in\omega \ \big\vert \ k_i\not=\infty\right\}\right\vert>1$, then the diameter of $\CK(\bar{k})$ is $2$.
\end{itemize}
In other words, the network $\mathcal{N}(\CK)$ of all subgroups of $\bigslant{\mathbb{Q}}{\mathbb{Z}}$ is a union of infinitely many connected component, each of which is of diameter $0$, $1$ or $2$.

The implication in Proposition~\ref{prop:ap->pup} cannot be replaced by an equivalence. Here is a straightforward example. Suppose that $L=\{0,a,b,1\}$ is the $4$-element Boolean lattice and assume that $\CK$ is the class of all sublattices of $L$. Up to isomorphism, there are only $4$ sublattices of $L$, so it is not hard to see that $\CK$ in this case has the push-up property\footnote{It is customary in universal algebra to assume that the underlying set of an algebra is not empty. We follow this tradition here, and thus there is no empty lattice.}. We show that $\CK$ does not have the amalgamation property. We do that by contradiction. Assume that $\CK$ has the amalgamation property. Consider the sublattice $L'=\{0,1\}$ and the following two embeddings of $L'$ into $L$:

\begin{minipage}{0.45\textwidth}
\centering
$$
\begin{array}{ccc}
L' & \stackrel{f}{\longrightarrow} & L \\
0 & \mapsto & 0\\
1 & \mapsto & 1
\end{array}
$$
\end{minipage}
\begin{minipage}{0.45\textwidth}
\centering
$$
\begin{array}{ccc}
L' & \stackrel{g}{\longrightarrow} & L \\
0 & \mapsto & 0\\
1 & \mapsto & a
\end{array}
$$
\end{minipage}

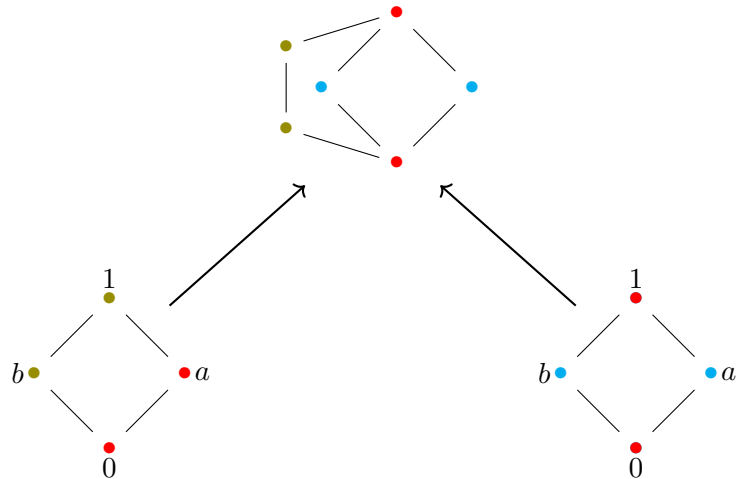
\begin{figure}[!ht]
\begin{tikzpicture}  
        \node[red] at (1,-1) (C) {$\bullet$};
        \node[red] at (0,-2) (A) {$\bullet$};       
        \node[olive] at (-1,-1) (B) {$\bullet$};
        \node[olive] at (0,0) (D) {$\bullet$};
        \draw (A) -- (B) node[left]{$b$};
        \draw (C) -- (A) node[below]{$0$};
        \draw (B) -- (D) node[above]{$1$};
        \draw (D) -- (C) node[right]{$a$};
        \node[cyan] at (7,-2) (AA) {$\bullet$};
        \node[cyan] at (7,0) (DD) {$\bullet$};
        \node[red] at (7,-2) (AA) {$\bullet$};
        \node[red] at (7,0) (DD) {$\bullet$}; 
        \node[cyan] at (6,-1) (BB) {$\bullet$};
        \node[cyan] at (8,-1) (CC) {$\bullet$};
        \draw (AA) -- (BB)  node[left]{$b$};
        \draw (CC) -- (AA)  node[below]{$0$};
        \draw (BB) -- (DD)  node[above]{$1$};
        \draw (DD) -- (CC)  node[right]{$a$};
    \node[red] at (3.82,1.8) (AAA) {$\bullet$};
        \node[cyan] at (2.82,2.8) (BBB) {$\bullet$};
        \node[cyan] at (4.82,2.8) (CCC) {$\bullet$};
        \node[red] at (3.82,3.8) (DDD) {$\bullet$};
        \draw (AAA) -- (BBB);
        \draw (AAA) -- (CCC);
        \draw (BBB) -- (DDD);
        \draw (CCC) -- (DDD);
        \node[olive] at (2.35,3.35) (M) {$\bullet$};
        \node[olive] at (2.35,2.25) (N) {$\bullet$};
        \draw (N) -- (M);
        \draw(AAA) -- (N);
        \draw (M) -- (DDD);
\draw[thick,black,->] (0.8,-0.1) -- (2.6,1.5); 
\draw[thick,black,->] (6.2,-0.1) -- (4.4,1.5); 
    \end{tikzpicture}
\caption{Amalgamating the $4$-element Boolean lattice with itself}\label{fig:4elattice}
\end{figure}

Then, by the amalgamation property, there is a lattice $L''\subseteq L$ that amalgamates $L$ with itself through $L'$ and the embeddings $f$ and $g$, hence $L''$ contains at least $6$ elements, see Figure~\ref{fig:4elattice}. This is a contradiction, a subalgebra of a $4$-element algebra cannot have more than $4$ elements. 

\section{Networks of monounary algebras}

In this section, we give a persuasive example of a network of large embeddings. This
is a network of easily visualizable structures of the form
$\a{A}=\langle A,f\rangle$ with one unary operation $f$. Such a
structure is called \emph{\textbf{monounary algebra}}. A nice feature of monounary algebras is that it is easy to describe
them pictorially. The algebra $\a{A}=\langle A,f\rangle$ can be represented by the directed graph $(A,E_A)$, with nodes $A$ and edges $E_A=\{(a,f(a)):a\in A\}$. For instance, Figure~\ref{ex-MU} gives pictorial graphical descriptions of some important examples of monounary algebras.

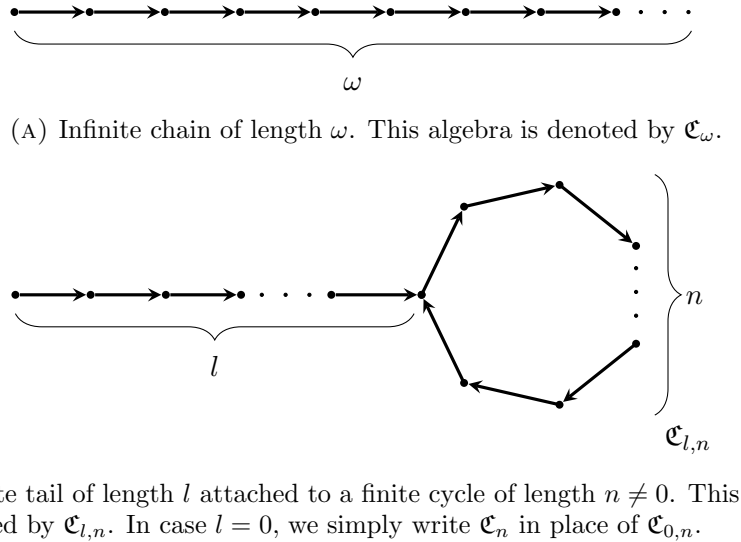
\begin{figure}[!ht]
\centering
\pgfmathsetmacro{\m}{8}
\begin{minipage}{0.75\linewidth}
\centering
\begin{tikzpicture}[decoration={markings,
  mark=between positions 0.2 and 0.8 step 9pt
  with { \draw [fill] (0,0) circle [radius=.6pt];}}]
\centering
\tikzset{arrow/.style = {very thick,->,> = stealth}}
\tikzset{point/.style = {shape=circle, fill, inner sep=0, minimum size=3}}

\begin{scope}[shift={(0,3)}]
\draw[white] (0,.5) to (7,.5);
\foreach \x in {0,1,...,\m}{
    \node[point] (\x) at (\x,0) {};
  \ifnum \x<\m \draw[arrow] (\x) to (\x+0.94,0);
	\fi
  }
   \path[postaction={decorate}] (\m) to (\m+1.4,0);

\draw [decorate,decoration={brace,amplitude=10pt}]
(9,-0.25) -- (0,-0.25) node [black,midway,yshift=-20pt] 
{$\omega$};
\end{scope}
\end{tikzpicture}
\subcaption{Infinite chain of length $\omega$. This algebra is denoted by $\MuN$.}\label{ex-MuN}
\end{minipage}

\bigskip

\begin{minipage}{0.75\linewidth}
\centering
\begin{tikzpicture}[decoration={markings,
  mark=between positions 0.2 and 0.8 step 9pt
  with { \draw [fill] (0,0) circle [radius=.6pt];}}]
  \centering
\tikzset{arrow/.style = {very thick,->,> = stealth}}
\tikzset{point/.style = {shape=circle, fill, inner sep=0, minimum size=3}}
\node[point] (0) at (0,0) {};
\node[point] (1) at (1,0){}; 
\node[point] (2) at (2,0){};
\node[point] (3) at (3,0){};
\node[point] (4) at (4.2,0){};
\node[point] (5) at (5.4,0){};
\draw[arrow] (0) to (1);
\draw[arrow] (1) to (2);
\draw[arrow] (2) to (3);
\path[postaction={decorate}] (3) to (4);
\draw[arrow] (4) to (5);
\foreach \a in {1,2,...,7}{
\node[point] (c\a) at ([shift={(6.9,0)}]180-\a*360/7: 1.5){};
\ifnum \a=4 \else \draw[arrow]  ([shift={(6.9,0)}]180-\a*360/7+360/7: 1.5) to  (c\a) ; \fi
}
\path[postaction={decorate}] (c3) to (c4);
\draw [decorate,decoration={brace,amplitude=10pt}]
(5.3,-0.25) -- (0,-0.25) node [black,midway,yshift=-20pt] 
{$l$};
\draw [decorate,decoration={brace,amplitude=10pt}]
(8.5,1.6) -- (8.5,-1.6) node [black,midway,xshift=15pt, yshift=-1pt] 
{$n$} node[below right]{$\Mpl{n}{l}$};
\end{tikzpicture}
\subcaption{Finite tail of length $l$ attached to a finite cycle of length $n\not=0$. This algebra is denoted by $\Mpl{n}{l}$. In case $l=0$, we simply write $\Core{n}$ in place of
$\Mpl{n}{0}$.}\label{ex-Mpl}
\end{minipage}

\caption{Pictorial examples of monounary algebras}\label{ex-MU}
\end{figure}

The theory of monounary algebras has an extensive literature, and the
research in this area is still active, see \eg \cite{JSP09, CJS18,
  Hetal11, JSP12, JPR12, KPSz19}. Before we begin our investigation,
we list some known facts about these algebras, see, \eg \cite{MMT87}. Throughout
this section, unless stated otherwise, $\CK$ is the class of all monounary
algebras and $\a{A}=\langle A,f\rangle$ is an arbitrary monounary algebra. 

%

We say that \emph{$\a{A}$ is \textbf{connected}} if for any $a,b\in A$ there are $k,m\in\omega$ such that $f^k(a)=f^m(b)$.\footnote{For every $n\in \omega$, we define $f^0(a)=a$ and $f^{n+1}(a)=f\left(f^n(a)\right)$.} Let $\a{B}=\langle B,g\rangle$ be a monounary algebra. Let
$\a{A}\sqcup\a{B}$ denote the \emph{\textbf{disjoint union} of the algebras} $\a{A}$ and $\a{B}$, whose universe $A\sqcup
B$ is the disjoint union of $A$ and $B$, and whose unary operation is the straightforward generalization of
the operations $f$ and $g$ to $A\sqcup B$. Finite and infinite disjoint unions of monounary algebras are defined as the natural generalizations of the operation $\sqcup$.


\begin{enumerate}
\item[(MU1)] Every monounary algebra can be uniquely decomposed into a disjoint union (finite or infinite) of connected monounary algebras.
\end{enumerate}

In other words, the monounary algebra $\a{A}$ is connected exactly if{}f its directed graph representation is connected in the sense of graph theory.  By (MU1), understanding the class of monounary algebras amounts to
understand its connected members. According to \cite[Thm.3.3]{MMT87}, we have the following characterization of connected monounary
algebras, where by \emph{an \textbf{in-tree}}, we understand a directed graph in which, for a particular vertex $r$ (called the \emph{\textbf{root}}) and any other vertex $u$, there is exactly one directed path from $u$ towards $r$. \footnote{An in-tree is essentially a rooted tree (as an undirected graph) in which all the edges are given an orientation (a direction) towards the root.}

\begin{enumerate}
\item[(MU2)] If $\a{A}$ is connected, then $\a{A}$ has a subalgebra that is isomorphic either to $\MuN$, or to $\Core{n}$ for some
  non-zero $n\in\omega$. This subalgebra is called a
\emph{\textbf{core} of $\a{A}$}, and it is unique up to isomorphism.\footnote{When no confusion is likely, we use phrase ``the core'' to mean the isomorphism type of the cores. It is worth noting that if a core is finite, then it is unique (not only up to isomorphism).} Deleting the edges of a core from the directed graph representation of $\a{A}$ would result in a union of disjoint in-trees each of which is rooted in an element of the core. See Figure~\ref{fig:mu3}.
\end{enumerate}

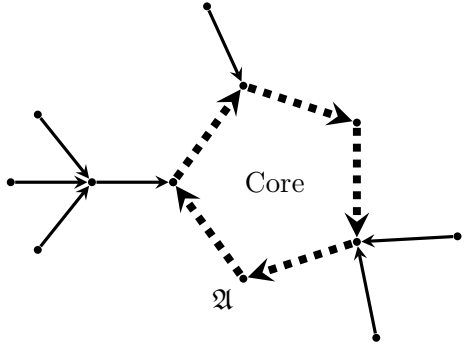
\begin{figure}[!ht]
  \centering
\begin{tikzpicture}[scale=0.9]
\tikzset{carrow/.style = {line width=3,->,> = stealth,shorten <=2,dashed}}
\tikzset{tarrow/.style = {very thick,->,> = stealth}}
\tikzset{point/.style = {shape=circle, fill, inner sep=0, minimum size=3}}
\node[point] (0) at (5.9,2.6) {};
\node[point] (1) at (8.4,-2.3){}; 
\node[point] (2) at (9.6,-0.8){};
\node[point] (3) at (3,0){};
\node[point] (4) at (4.2,0){};
\node[point] (5) at (5.4,0){};
\node[point] (4a) at (3.4,1) {};
\node[point] (4b) at (3.4,-1) {};
\foreach \a in {1,2,...,5}{
\node[point] (c\a) at ([shift={(6.9,0)}]180-\a*360/5: 1.5){};
\draw[carrow]  ([shift={(6.9,0)}]180-\a*360/5+360/5: 1.5) to  (c\a); 
}
\draw[tarrow] (0) to (c1);
\draw[tarrow] (1) to (c3);
\draw[tarrow] (2) to (c3);
\draw[tarrow] (3) to (4);
\draw[tarrow] (4) to (c5);
\draw[tarrow] (4a) to (4);
\draw[tarrow] (4b) to (4);
\node at (6.9,0) {Core};
\node[below left] at  (c4) {$\a{A}$};
\end{tikzpicture}
\caption{Core and in-trees rooted in the vertices of this core}\label{fig:mu3}
\end{figure}

Speaking of in-trees, we recall some related graph-theoretical notions. In an in-tree, \emph{the \textbf{parent} of a vertex $v$} is the vertex adjacent to $v$ in the path towards the root; every vertex has a unique parent except the root which has no parent. \emph{A \textbf{child} of a vertex $v$} is a vertex of which $v$ is the parent. A \emph{\textbf{leaf}} is a vertex has no children but has a parent (\ie it is not a root). Now, we will highlight the interconnection between the leafs of the in-trees quoted in (MU2) and the generators of $\a{A}$.

\emph{The monounary algebra $\a{A}$ is said to have an \textbf{infinite tail}} if there is an infinite sequence $$a_0,a_1,a_2,\ldots \in A$$
of elements of $A$ such that $a_n\not=a_m$ and $f(a_{n+1})=a_{n}$ for each $n,m\in \omega$ with $n\not=m$. \emph{An element $a\in A$ is called an \textbf{independent element}} if there is no $b\in A$ with $f(b)=a$. The set of all independent elements of $\a{A}$ is denoted by $\ind(\a{A})$. We denote the \emph{\textbf{minimum number of generators} of} $\a{A}$ by $\MGen(\a{A})$. If $\a{A}$ is not finitely
generated, then $\MGen(\a{A})$ is defined to be $\infty$.

\begin{enumerate}
\item[(MU3)] Assume that $\displaystyle\bigsqcup_{i\in I}\a{A}_i$ is a disjoint union of connected monounary algebras, for some set $I$, then 
    $$\MGen(\bigsqcup_{i\in I}\a{A}_i)=\sum_{i\in I}\MGen(\a{A}_i).$$
    So, in particular, if $\a{A}$ can be decomposed into an infinite disjoint union of connected monounary algebras, then $\MGen(\a{A})=\infty$. Now, suppose that $\a{A}$ is a connected algebra. One can effortlessly find a core for $\a{A}$ such that $\ind(\a{A})$ is the set of all leafs of all in-trees rooted in the vertices of this core, \cf (MU2). Whence, the following are legitimate.

\begin{enumerate}
    \item[(MU3a)] If $\a{A}$ has an infinite tail, then it cannot be finitely generated, \ie $\MGen(\a{A})=\infty$.
    \item[(MU3b)] If $\a{A}$ has no infinite tails and $\vert\ind(\a{A})\vert\geq 1$, then $\MGen(\a{A})=\vert \ind(\a{A})\vert$.\footnote{By $\vert\ind(\a{A})\vert$, we mean the size of $\ind(\a{A})$ if it is finite, and $\infty$ otherwise.}
    \item[(MU3c)] If $\a{A}$ has no infinite tails and $\ind(\a{A})=\emptyset$, then $\MGen(\a{A})=1$. In this case, $\a{A}$ is isomorphic to $\a{C}_n$ for some non-zero $n\in\omega$.
\end{enumerate}
\item[(MU4)] Every monounary algebra generated by a single element is
  isomorphic to either $\MuN$ or $\Mpl{n}{l}$ for some
  $l,n\in\omega$ and $n\neq 0$. See Figure~\ref{ex-MU}.
\end{enumerate}

\begin{prop}\label{embdgen}
The following is true for any monounary algebras $\a{A}$ and $\a{B}$:
$$\a{A}\text{ is embeddable into }\a{B}\implies \MGen(\a{A})\leq \MGen(\a{B}).$$
\end{prop}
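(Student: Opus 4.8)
The plan is to prove the contrapositive-friendly implication directly: assume $\a{A}$ embeds into $\a{B}$ via some embedding $h\colon\a{A}\to\a{B}$, and show any generating set of $\a{B}$ gives rise to a generating set of $\a{A}$ of no larger cardinality. Concretely, suppose $\a{B}=\langle Y\rangle$ with $|Y|=\MGen(\a{B})$ (if $\MGen(\a{B})=\infty$ there is nothing to prove, so assume it is finite). The natural candidate is to pull $Y$ back through $h$, but $Y$ need not lie in $h[A]$, so the first step is to replace each $y\in Y$ by an element of $h[A]$ that generates the same part of $\a{A}$'s image. Here I would use the monounary structure: in a monounary algebra every element of $\langle X\rangle$ has the form $f^m(x)$ for some $x\in X$ and $m\in\omega$ (this is the content of (MU4) applied locally, or just an easy induction on term formation since there is a single unary operation).

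So the key step is: for each $y\in Y$, since $h[A]$ is the universe of a subalgebra $h[\a{A}]\subseteq\a{B}$ and $\a{B}=\langle Y\rangle$, every element of $h[A]$ is $f^m(y_i)$ for some $i$ and some $m$. Conversely I want, for each generator I need of $\a{A}$, to see it is reachable. Cleaner route: recall from (MU3) that $\MGen$ is additive over connected components, and for a connected algebra $\MGen$ is governed by the presence of an infinite tail and the number of independent elements $|\ind(\cdot)|$ — (MU3a), (MU3b), (MU3c). So I would reduce to the connected case by (MU1) and (MU3): if $\a{A}=\bigsqcup_i\a{A}_i$ embeds into $\a{B}=\bigsqcup_j\a{B}_j$, then each connected component $\a{A}_i$ (being connected) embeds into a single connected component $\a{B}_{j(i)}$, and distinct $\a{A}_i$ with a common target would still be fine for the inequality; then $\MGen(\a{A})=\sum_i\MGen(\a{A}_i)\le\sum_j\MGen(\a{B}_j)=\MGen(\a{B})$ provided the connected-case inequality holds and the map $i\mapsto j(i)$ can be taken injective or at least the multiplicities behave — actually since we only need $\le$, I just need: for each $j$, $\sum_{i:j(i)=j}\MGen(\a{A}_i)\le\MGen(\a{B}_j)$, which follows because $\bigsqcup_{i:j(i)=j}\a{A}_i$ embeds into $\a{B}_j$ and $\MGen$ is additive.

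Thus everything comes down to the connected case: if $\a{A}$ is connected and embeds into a connected $\a{B}$, then $\MGen(\a{A})\le\MGen(\a{B})$. Here I would argue with (MU3a)–(MU3c). If $\a{A}$ has an infinite tail $a_0,a_1,\ldots$, then its image $h(a_0),h(a_1),\ldots$ is an infinite tail in $\a{B}$ (injectivity of $h$ gives the $a_n$ distinct, and $g(h(a_{n+1}))=h(f(a_{n+1}))=h(a_n)$), so $\MGen(\a{B})=\infty$ and we are done. If $\a{A}$ has no infinite tail, then by (MU3b)/(MU3c) $\MGen(\a{A})=\max(1,|\ind(\a{A})|)$; I claim $h$ maps distinct independent elements of $\a{A}$ to elements of $\a{B}$ that are "independent within $h[\a{A}]$", and more to the point each independent element of $\a{A}$ forces either an independent element of $\a{B}$ or an infinite tail of $\a{B}$ — because if $a\in\ind(\a{A})$ but $h(a)$ has a $g$-preimage $b$ in $\a{B}$, then $b\notin h[A]$, and iterating one gets either a preimage chain terminating at an independent element of $\a{B}$ or an infinite backward chain, i.e. an infinite tail. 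A careful bookkeeping then yields $|\ind(\a{A})|\le\MGen(\a{B})$, and since $\MGen(\a{B})\ge1$ always, $\max(1,|\ind(\a{A})|)\le\MGen(\a{B})$.

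The main obstacle I anticipate is the last bookkeeping step in the connected case: translating "$a$ is independent in $\a{A}$" into a genuine contribution to $\MGen(\a{B})$ when $h[A]$ sits inside $\a{B}$ in a complicated way (the preimages of $h(a)$ in $\a{B}$ may branch). The clean way around it is to pass to the forward-closure description: a minimal generating set of a no-infinite-tail connected algebra is exactly its set of independent elements (leaves of the in-trees, by (MU2)/(MU3)), and an embedding $h$ must send a generating set to a generating set of the subalgebra $h[\a{A}]$; then use that $\a{B}=\langle Y\rangle$ with $|Y|=\MGen(\a{B})$ forces $h[\a{A}]$ — and hence $\a{A}$ — to be generated by at most $|Y|$ elements, since each $y\in Y$ contributes at most one "new" independent element along the part of its forward orbit lying in $h[A]$. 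I expect this to be short once the single-operation normal form "$\langle X\rangle=\{f^m(x):x\in X,\ m\in\omega\}$" is invoked.
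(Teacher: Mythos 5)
Your proposal is correct and follows essentially the same route as the paper: reduce to connected subalgebras, note that an infinite tail in $\a{A}$ forces $\MGen(\a{B})=\infty$, and then count independent elements by injectively assigning each $a\in\ind(\a{A})$ to a generator or independent element of $\a{B}$, with injectivity coming from the fact that no independent element of $\a{A}$ can lie in the forward $f$-orbit of another element of $A$. The paper implements this by chasing preimages backward to land in $\ind(\a{B})$, whereas your preferred ``forward-orbit'' bookkeeping against a minimum generating set $Y$ of $\a{B}$ is an equivalent repackaging of the same counting argument.
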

\begin{proof}
It is enough to prove the proposition for connected monounary algebras $\a{A}$ and $\a{B}$. We can also assume that $\a{A}$ is a subalgebra of $\a{B}$. If $\MGen(\a{B})=\infty$, then there is nothing to prove. So, we can assume that $\MGen(\a{B})<\infty$. Thus, neither of $\a{A}$ and $\a{B}$ can have any infinite tail; otherwise (MU3a) would contradict the finiteness of $\MGen(\a{B})$. If $\ind(\a{A})=\emptyset$, then we are obviously done by (MU3c). Now, we assume that $\ind(\a{A})\not=\emptyset$. We will show that $\vert\ind(\a{A})\vert\leq \vert\ind(\a{B})\vert$. To this end, we define a function $\psi:\ind(\a{A})\rightarrow\ind(\a{B})$ as follows. Let $a\in\ind(\a{A})$ be an independent element of $\a{A}$. If $a$ is independent in $\a{B}$, then we define $\psi(a)=a$. If not, then there is $a_1\in \a{B}$ such that $f(a_1)=a$. If $a_1$ is an independent element in $\a{B}$, then we define $\psi(a)=a_1$. If not, then there is $a_2\in \a{B}$ such that $f(a_2)=a_1$. We can continue in this procedure till we find an independent element $a_n\in\a{B}$ such that $f^n(a_n)=a$. This procedure will halt because $\a{B}$ does not contain an infinite tail. Finally, we define $\psi(a)=a_n$. It remains to prove that $\psi$ is one-to-one. Let $a,b\in\ind(\a{A})$ be such that $\psi(a)=\psi(b)$. By the construction of $\psi$, there are $n,m\in\omega$ such that $a=f^n(\psi(a))$ and $b=f^m(\psi(b))$. Without loss of generality, we can assume that $n\leq m$. So, we can find a natural number $k\in\omega$ such that $n+k=m$. Since $f$ is a function, then the following is true in $\a{B}$:
$$b=f^m(\psi(b))=f^{n+k}(\psi(b))=f^k(f^n(\psi(b)))=f^k(f^n(\psi(a)))=f^k(a).$$ 
The equation $b=f^k(a)$ must be true in the subalgebra $\a{A}$ too because $\a{A}$ is closed under the operation $f$. That implies that $k=0$, because if $k>0$, then we will have a contradiction with the fact that both $a$ and $b$ are independent in $\a{A}$. Therefore, $a=b$ and $\psi$ is one-to-one as desired.
\end{proof}

Now we are ready to state our main result in this section. For each $n\in\omega$, $S_n$ denotes the set of all permutations on the set $\{i\in\omega:i< n\}$. From now on, the symbols $i,j,k$ will be used for indexes that vary among natural numbers in $\omega$. 

\begin{thm}\label{prop-perm}
  Let $n\le m$ be two positive natural numbers, let $\a{A}$ and $\a{B}$ be two
  monounary algebras having $n$-many and $m$-many connected components,
  and let $\a{A}=\displaystyle\bigsqcup_{i<n} \a{A}_i$ and
  $\displaystyle\a{B}=\bigsqcup_{j<m} \a{B}_j$ be their decomposition into
  disjoint unions of connected monounary algebras. Then
  \begin{equation}\label{eq-perm}
    \dis(\a{A},\a{B}) = \min_{\pi\in S_m}\left(\sum_{i<n}
    \dis\left(\a{A}_i,\a{B}_{\pi(i)}\right) +
    \sum_{n\leq k<m}\MGen\left(\a{B}_{\pi(k)}\right) \right).
  \end{equation}
\end{thm}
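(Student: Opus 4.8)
The plan is to prove the two inequalities $\dis(\a{A},\a{B}) \le \text{RHS}$ and $\dis(\a{A},\a{B}) \ge \text{RHS}$ separately, exploiting the fact that the class of all monounary algebras has the amalgamation property (so by Proposition~\ref{prop:ap->pup} and Proposition~\ref{prop:pushpath} every geodesic can be pushed up to a common large over-algebra). First I would establish the upper bound. Fix a permutation $\pi \in S_m$. For each $i<n$ take a geodesic path realizing $\dis(\a{A}_i,\a{B}_{\pi(i)})$; since the class is closed under subalgebras and has the push-up property, I may assume this path has the form $\a{A}_i \addn{m_i} \a{D}_i \dda[n_i] \a{B}_{\pi(i)}$ with $m_i+n_i = \dis(\a{A}_i,\a{B}_{\pi(i)})$. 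Taking disjoint unions componentwise, and using (MU3) ($\MGen$ is additive over disjoint unions) together with the observation that adding a single generator to one component of a disjoint union is a single large-embedding step, I can splice these into a path from $\a{A}$ to $\bigsqcup_{i<n}\a{D}_i \sqcup \bigsqcup_{n\le k<m}\a{B}_{\pi(k)}$ and then down to $\a{B}$; the extra $\sum_{n\le k<m}\MGen(\a{B}_{\pi(k)})$ steps are exactly what is needed to build the unmatched components $\a{B}_{\pi(k)}$ of $\a{B}$ from scratch (generating a connected monounary algebra requires $\MGen$-many generator-additions, one per generator, starting from its core — or, if $\MGen = \infty$, this $\pi$ contributes $\infty$ and is irrelevant to the minimum). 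Minimizing over $\pi$ gives $\dis(\a{A},\a{B}) \le \text{RHS}$.

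For the lower bound, I would take an optimal path and push it up to the form $\a{A} \addn{p} \a{D} \dda[q] \a{B}$ with $p+q = \dis(\a{A},\a{B})$, using the amalgamation/push-up machinery. Now $\a{A}$ embeds into $\a{D}$ as a large subalgebra after $p$ generator additions, and likewise $\a{B}$ into $\a{D}$ after $q$. The key structural point is to track how connected components behave: a single step of adding one generator to a monounary algebra either enlarges one existing connected component or (if the new generator lies in a new component, i.e.\ is independent with nothing below it) the change in component structure is controlled. Decompose $\a{D} = \bigsqcup_{t} \a{D}_t$ into connected components. Because $\a{A} \hookrightarrow \a{D}$, each connected component $\a{A}_i$ lands inside some component $\a{D}_{t(i)}$; similarly each $\a{B}_j$ lands inside some $\a{D}_{s(j)}$. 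I would argue that one may assume $\a{D}$ has exactly $m$ components, that the map $j \mapsto s(j)$ is a bijection onto them (since $\a{B}$ has $m$ components and generator-additions from $\a{B}$ cannot increase the component count unless they waste steps — any such wasted step can be removed, contradicting optimality, as in the push-up argument for $\bigslant{\mathbb{Q}}{\mathbb{Z}}$), and that then $t(i)$ together with this bijection defines a permutation-like matching $\pi$ with $\a{A}_i, \a{B}_{\pi(i)} \subseteq \a{D}_{\pi\text{-component}}$. Using Proposition~\ref{embdgen} and (MU3), the number of steps used within the component containing $\a{A}_i$ and $\a{B}_{\pi(i)}$ is at least $\dis(\a{A}_i,\a{B}_{\pi(i)})$ (that component of $\a{D}$ is a common large-ish over-algebra, giving a path of that blue-length between the two), while each component $\a{D}_t$ not meeting $\a{A}$ must be built from $\a{B}_{\pi(k)}$ using at least $\MGen(\a{D}_t) \ge \MGen(\a{B}_{\pi(k)})$ downward-or-upward steps; summing over $t$ yields $p+q \ge \sum_{i<n}\dis(\a{A}_i,\a{B}_{\pi(i)}) + \sum_{n\le k<m}\MGen(\a{B}_{\pi(k)}) \ge \text{RHS}$.

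I would also need to dispatch the $\infty$ cases at the start: if the RHS is finite then for the minimizing $\pi$ all the $\MGen(\a{B}_{\pi(k)})$ are finite and all $\dis(\a{A}_i,\a{B}_{\pi(i)})$ are finite, so $\a{A}$ and $\a{B}$ lie in the same connected component of $\mathcal{N}(\CK)$; conversely if every $\pi$ gives $\infty$ on the RHS then either some component of $\a{B}$ fails to connect to any component of $\a{A}$ or is not finitely generated, and in either case no finite path can reach $\a{B}$ from $\a{A}$ — this uses (MU3a)/(MU1) to see that connectivity in the network is determined componentwise and that an infinitely-generated component is an insurmountable obstacle.

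The main obstacle I expect is the bookkeeping in the lower bound: carefully justifying that an optimal pushed-up path induces a well-defined matching $\pi \in S_m$ between the components of $\a{A}$, the components of $\a{B}$, and the components of $\a{D}$, and that the ``wasted step'' elimination argument (the generator-addition that creates a spurious new component or that lies already in the subalgebra) is legitimate in every configuration — including the subtle case where two components of $\a{A}$ might a priori merge inside $\a{D}$, which I must rule out using connectedness of components of $\a{B}$ and optimality. Once the combinatorial matching is pinned down, the per-component estimates are immediate from Proposition~\ref{embdgen}, (MU3), and the definition of $\dis$.
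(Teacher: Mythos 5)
Your upper bound is exactly the paper's argument (splice componentwise geodesics for a fixed $\pi$ and build the unmatched components of $\a{B}$ from scratch), and your overall strategy for the lower bound --- push the geodesic up to $\a{A}\addn{p}\a{D}$, $\a{B}\addn{q}\a{D}$ with $p+q=\dis(\a{A},\a{B})$ and track how components of $\a{A}$ and $\a{B}$ sit inside components of $\a{D}$ --- is also the paper's. But the pivotal structural claim in your lower bound is false: you cannot assume that $\a{D}$ has exactly $m$ components, nor that $\a{A}_i$ and its partner $\a{B}_{\pi(i)}$ land in the \emph{same} component of $\a{D}$. Take $\a{A}=\Core{2}\sqcup\Core{3}$ and $\a{B}=\Core{2}\sqcup\Core{5}$. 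A connected monounary algebra with finite core contains a copy of $\Core{n}$ only if its core is $\Core{n}$, so no component of $\a{D}$ can host copies of both $\Core{3}$ and $\Core{5}$; every common over-algebra therefore has at least three components, and the optimal $\a{D}=\Core{2}\sqcup\Core{3}\sqcup\Core{5}$ is reached from $\a{B}$ by a generator-addition that creates a brand-new component and is \emph{not} a wasted step. Hence your claim that ``generator-additions from $\a{B}$ cannot increase the component count unless they waste steps'' fails, and with it the bijection $j\mapsto s(j)$ and the per-component estimate ``the number of steps used within the component containing $\a{A}_i$ and $\a{B}_{\pi(i)}$ is at least $\dis(\a{A}_i,\a{B}_{\pi(i)})$'', which only makes sense when the two actually share a component of $\a{D}$.

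The paper's proof handles precisely this configuration: it partitions the components of $\a{D}$ into those met by both $\a{A}$ and $\a{B}$ (red), only by $\a{A}$ (blue), only by $\a{B}$ (green), shows by minimality that there are no others, and proves that for a blue $\a{A}_b$ and a green $\a{B}_g$ the cores must be non-isomorphic and $\dis(\a{A}_b,\a{B}_g)=\MGen(\a{A}_b)+\MGen(\a{B}_g)$ via Proposition~\ref{prop-diffcores}. The minimizing permutation then pairs red with red and blue with green, and each blue--green pair contributes its full $\MGen+\MGen$ cost even though the two components occupy disjoint components of $\a{D}$. Your outline budgets $\MGen$ only for components of $\a{D}$ not meeting $\a{A}$ (the green ones) and has no account of the blue ones, so as written the lower bound does not close; invoking Proposition~\ref{prop-diffcores} for the mismatched-core pairs is the missing ingredient.
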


Roughly speaking, Theorem~\ref{prop-perm} states that in order to determine the distance between two monounary algebras $\a{A}$ and
$\a{B}$ having finitely many connected components it is enough to
determine the distance between their components, and then the distance
of $\a{A}$ and $\a{B}$ can be calculated by finding a ``best
matching'' of the components that minimizes the sums in equation~\eqref{eq-perm}. To prove Theorem~\ref{prop-perm}, we need first to prove some key propositions that seem interesting in their own.

The class $\CK$ of all monounary algebras is closed under the formation of subalgebras. So, in order to characterize the network of large embeddings in $\CK$, it is enough to consider the black edges that represent the symmetric closure of the large embedding relation. We start with characterizing the large embeddings between monounary algebras. To do so, we introduce a monounary algebra constructed from another one by attaching an
$n$-long ``tail'' at some element as follows.

\begin{defn}\label{def-MUtail}
  Let $\a{A}=\langle A,f\rangle$ be a monounary algebra, and let $a\in A$
  and let $n\in\omega$. We define a monounary algebra
  $\a{A}+_{a}n$ as follows: Let $\left\{x_i: i<n\right\}$ be a set of size
  $n$ of brand-new elements none of which is an element of $A$, and let
  \begin{equation*}
    \a{A}+_{a}n\defeq \left \langle A\sqcup\left\{x_i: i< n\right\},\hat
    f\right\rangle
  \end{equation*}
  where $\hat f$ is defined as
  \begin{equation*}
    \hat f(x)=\begin{cases}
     f(x)  &\text{if  }\ x\in A,\\
     x_{i+1}  &\text{if }\ x=x_i \text{ and } i< n-1, \text{ and}\\
     a  &\text{if }\ n\not=0 \text{ and } x=x_{n-1}.
    \end{cases}
  \end{equation*}
\end{defn}
Note that, for all $a,b\in A$ and $n,m\in\omega$, we have
\begin{equation}\label{eq-tail}
  \a{A}+_{a}0\cong\a{A} \quad \text{ and }\quad
  \a{A}+_{a}n+_{b}m\cong\a{A}+_{b}m+_{a}n.
\end{equation}

\begin{prop}\label{lem-largeMu}
  Let $\a{A},\a{B}$ be two monounary
  algebras. Then
\begin{equation}\label{eq-largeMu}
    \a{A}\add\a{B} \iff
    \begin{cases}
      \a{B}\cong \a{A}\sqcup\MuN,& \\
      \a{B}\cong \a{A}\sqcup\Mpl{n}{l} & \text{ for some }l,n\in\omega\text{ and } n\neq 0 \text{, or }  \\
      \a{B}\cong \a{A}+_{a}m & \text{ for some } m\in\omega \text{ and } a\in A.
    \end{cases}
\end{equation}
\end{prop}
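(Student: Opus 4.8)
The plan is to prove both directions of the biconditional, with the forward direction being the substantive one. For the backward direction ($\Leftarrow$), I would simply exhibit the extra generator in each of the three cases. If $\a{B}\cong\a{A}\sqcup\MuN$, then picking any generator of the $\MuN$ summand (the bottom of its infinite chain, i.e.\ the unique element of $\ind(\MuN)$) together with $A$ generates all of $\a{B}$, since $\MuN$ is singly generated by (MU4); hence $\a{A}$ sits as a large subalgebra. The case $\a{B}\cong\a{A}\sqcup\Mpl{n}{l}$ is identical, using that $\Mpl{n}{l}$ is singly generated by (MU4) (its generator being the leaf at the end of the $l$-tail, or any cycle element when $l=0$). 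For $\a{B}\cong\a{A}+_a m$: if $m=0$ then $\a{B}\cong\a{A}$ and there is nothing to do; if $m\neq 0$ then the single new element $x_0$ generates $\{x_0,\dots,x_{m-1}\}$ under $\hat f$ (as $\hat f^i(x_0)=x_i$), so $\langle A\cup\{x_0\}\rangle=\a{B}$. In all cases $\a{A}$ (or an isomorphic copy inside $\a{B}$) is a large subalgebra, so $\a{A}\add\a{B}$.

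For the forward direction ($\Rightarrow$), suppose $\a{A}\add\a{B}$. By definition there is $\a{C}\cong\a{A}$ that is a large subalgebra of $\a{B}$, so without loss of generality $\a{A}$ itself is a subalgebra of $\a{B}$ and there is $b\in B$ with $\langle A\cup\{b\}\rangle=\a{B}$. The key structural fact I would use is that $\langle\{b\}\rangle$, the monounary subalgebra generated by the single element $b$, is itself isomorphic to $\MuN$ or to some $\Mpl{n}{l}$ by (MU4). Now $B=\langle A\cup\{b\}\rangle=A\cup\{f^i(b):i\in\omega\}$, because closing under $f$ is the only operation available; so $\a{B}$ is obtained from $\a{A}$ by adjoining the forward orbit $b,f(b),f^2(b),\dots$ of $b$. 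I would then do a case analysis on how this orbit interacts with $A$: either the orbit is disjoint from $A$ (equivalently $f^i(b)\notin A$ for all $i$), or there is a least $i_0\in\omega$ with $f^{i_0}(b)\in A$.

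In the disjoint case, the orbit of $b$ forms a connected monounary algebra with no edges into or out of $A$, hence $\a{B}\cong\a{A}\sqcup\langle\{b\}\rangle$, and by (MU4) $\langle\{b\}\rangle\cong\MuN$ or $\cong\Mpl{n}{l}$, giving the first two cases. In the second case, let $a=f^{i_0}(b)\in A$ and consider the finitely many ``new'' elements $b,f(b),\dots,f^{i_0-1}(b)$; these must be pairwise distinct and outside $A$ (minimality of $i_0$ plus: if two of them coincided, the orbit would cycle and never reach $A$, contradicting $f^{i_0}(b)\in A$). Setting $m=i_0$ and matching $f^j(b)$ with $x_j$ shows precisely that $\a{B}\cong\a{A}+_a m$. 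The one subtlety to handle carefully is ruling out a premature cycle among $b,f(b),\dots,f^{i_0-1}(b)$: if $f^r(b)=f^s(b)$ for some $r<s<i_0$, then the forward orbit of $b$ from $f^r(b)$ onward is eventually periodic and never enters $A$, so no $f^{i_0}(b)$ could lie in $A$ — contradiction. This is the main (though mild) obstacle; everything else is bookkeeping with (MU4) and Definition~\ref{def-MUtail}. Note also the degenerate overlap $\a{A}+_a 0\cong\a{A}$ handled in the backward direction corresponds here to the possibility $b\in A$ already (then $i_0=0$), which is harmless.
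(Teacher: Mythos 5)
Your proposal is correct and follows essentially the same route as the paper's proof: the backward direction via (MU4) and the construction of $\a{A}+_a m$, and the forward direction by splitting on whether the forward orbit of the extra generator $b$ ever enters the subalgebra, with (MU4) handling the disjoint case and the least entry index giving the tail length $m$. Your extra check that no premature cycle can occur among $b,f(b),\dots,f^{i_0-1}(b)$ is a welcome filling-in of a detail the paper dismisses with ``it is not hard to see.''
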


\begin{proof}
  The direction ``$\Longleftarrow$'' is straightforward by (MU4) and the construction in Definition~\ref{def-MUtail}. To show the other direction ``$\Longrightarrow$'', we assume that $\a{A}\add\a{B}$, then there is a subalgebra $\a{A}'\subseteq\a{B}$ such that
  $\a{A}'$ is isomorphic to $\a{A}$, and $\a{B}=\langle A'\cup\{b\}\rangle$ for some
  $b\in B$. Let $g$ denote the unary operation of $\a{B}$.
  There are two cases:

\begin{enumerate}
\item[(I)] Suppose that $g^n(b)\not \in A'$ for every $n\in\omega$. In
  this case, the subalgebra of $\a{B}$ generated by $b$ must be disjoint from $A'$. By (MU4), $\langle
  b\rangle$ is either isomorphic to $\MuN$ or to $\Mpl{n}{l}$ for some appropriate
  natural numbers $n$ and $l$. Since $\langle A'\rangle\cong\a{A}$, this
  gives the first two cases of \eqref{eq-largeMu}.
\item[(II)] Suppose that $g^m(b)\in A'$ for some $m\in\omega$ such that
  $g^{m-1}(b)\not\in A'$ or $m=0$.  Then it is not hard to see that $\a{B}\cong\a{A}'+_{a'}m$,
  where $a'=g^m(b)$. Hence, there is $a\in A$ such that $\a{B}\cong\a{A}'+_{a'}m\cong\a{A}+_{a}m$, and this corresponds to the third case of \eqref{eq-largeMu}.\qedhere
\end{enumerate}
\end{proof}

The class of monounary algebras has the amalgamation property, \cf \eg
\cite[p.98]{KMPT83} and \cite{berth67}; hence, by Proposition~\ref{prop:ap->pup}, it also
has the push-up property. Below, we give a direct proof that gives insights on why $\CK$ has the push-up property.

\begin{prop}\label{prop-amalg}
  The class of monounary algebras has the push-up property.
\end{prop}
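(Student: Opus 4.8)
The plan is to verify the push-up property directly from the characterization of large embeddings in Proposition~\ref{lem-largeMu}. Let $\a{A},\a{B},\a{C}\in\CK$ with $\a{A}\dda\a{C}\add\a{B}$; we must produce $\a{D}\in\CK$ with $\a{A}\add\a{D}\dda\a{B}$. By Proposition~\ref{lem-largeMu} applied to $\a{C}\add\a{A}$ and to $\a{C}\add\a{B}$, there are only three shapes each of $\a{A}$ and $\a{B}$ can take over $\a{C}$: either $\a{A}\cong\a{C}\sqcup\MuN$, or $\a{A}\cong\a{C}\sqcup\Mpl{n}{l}$ for some $l,n$ with $n\ne 0$, or $\a{A}\cong\a{C}+_{c}m$ for some $c\in C$ and $m\in\omega$ (and symmetrically for $\a{B}$, with parameters $c'\in C$, etc.). This gives a small finite case analysis (up to the symmetry $\a{A}\leftrightarrow\a{B}$, six essentially different pairs). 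First I would handle the easy cases: if both $\a{A}$ and $\a{B}$ are obtained from $\a{C}$ by adjoining a disjoint connected one-generated piece (an $\MuN$ or a $\Mpl{n}{l}$), just set $\a{D}\defeq\a{C}\sqcup P_{\a{A}}\sqcup P_{\a{B}}$, the disjoint union of $\a{C}$ with both adjoined pieces; then $\a{A}\add\a{D}$ by adjoining $P_{\a{B}}$ and $\a{B}\add\a{D}$ by adjoining $P_{\a{A}}$, using Proposition~\ref{lem-largeMu} in the ``$\Longleftarrow$'' direction. Similarly, if both are tail-extensions, $\a{A}\cong\a{C}+_{c}m$ and $\a{B}\cong\a{C}+_{c'}m'$, take $\a{D}\defeq\a{C}+_{c}m+_{c'}m'$ (well-defined and in $\CK$ by Definition~\ref{def-MUtail}), and use the commutation identity~\eqref{eq-tail}: $\a{A}=\a{C}+_cm\add(\a{C}+_cm)+_{c'}m'\cong\a{D}$ and likewise $\a{B}=\a{C}+_{c'}m'\add(\a{C}+_{c'}m')+_cm\cong\a{C}+_cm+_{c'}m'=\a{D}$.

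The remaining (mixed) cases combine a disjoint-union extension on one side with a tail-extension on the other, e.g.\ $\a{A}\cong\a{C}\sqcup\MuN$ (or $\a{C}\sqcup\Mpl{n}{l}$) and $\a{B}\cong\a{C}+_{c'}m'$. Here I would take $\a{D}\defeq\bigl(\a{C}+_{c'}m'\bigr)\sqcup P_{\a{A}}$, where $P_{\a{A}}$ is the disjoint piece adjoined to form $\a{A}$. Then $\a{B}=\a{C}+_{c'}m'\add\a{D}$ by adjoining the disjoint summand $P_{\a{A}}$, and $\a{A}=\a{C}\sqcup P_{\a{A}}\add\a{D}$ by performing the tail-extension $+_{c'}m'$ inside the $\a{C}$-summand (the extra generator $x_0$ of that tail still generates $\a{D}$ over $\a{A}$, since $\a{D}$ differs from $\a{A}$ only by that tail). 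Both assertions are instances of the ``$\Longleftarrow$'' direction of Proposition~\ref{lem-largeMu}. The case $\a{A}\cong\a{C}\sqcup\Mpl{n}{l}$ paired with a tail-extension of $\a{B}$ is identical with $\MuN$ replaced by $\Mpl{n}{l}$.

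The main obstacle, such as it is, is purely bookkeeping: making sure in the mixed and tail-tail cases that $\a{D}$ really is generated over each of $\a{A}$ and $\a{B}$ by a single element, and that the isomorphisms witnessing $\a{A}\cong\a{C}+_cm$ etc.\ can be chosen compatibly so that the two tail-extensions (or the tail and the disjoint summand) are attached to the ``same'' $\a{C}$ inside $\a{D}$ — this is where the identity~\eqref{eq-tail} and the fact that $\CK$ is closed under subalgebras and under (finite) disjoint unions do the work. No amalgamation-type commuting square is needed; one only needs the concrete constructions $+_an$ and $\sqcup$, so the argument is elementary. I would close by remarking that this direct construction explains \emph{why} the push-up property holds here: adjoining disjoint connected summands and attaching finite tails are operations that visibly commute with one another on the directed-graph picture of a monounary algebra.
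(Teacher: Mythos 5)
Your proof is correct and takes essentially the same route as the paper's: a case analysis driven by Proposition~\ref{lem-largeMu}, with the eight cases involving a disjoint summand handled by forming a disjoint union and the tail--tail case handled via the commutation identity~\eqref{eq-tail}. You merely spell out the easy cases in more detail, and the bookkeeping issue you flag (choosing the isomorphisms compatibly) is resolved in the paper exactly as you suggest, by transporting the attachment point along the given isomorphism before applying~\eqref{eq-tail}.
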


\begin{proof}
  We should show that, for all $\a{A},\a{B}\in\CK$,
  \begin{equation}\label{eq-amalg}
     (\exists \a{C}\in\CK)\quad \a{A}\dda \a{C}\add
     \a{B}\ \implies\ (\exists \a{D}\in\CK)\quad \a{A}\add \a{D}\dda
    \a{B}.
  \end{equation}
  By Proposition~\ref{lem-largeMu}, there are nine cases to be
  considered to prove \eqref{eq-amalg}. For the eight cases when either
  $\a{A}$ or $\a{B}$ can be achieved from $\a{C}$ by a disjoint union, the desired algebra $\a{D}$ can be easily constructed using disjoint union, \eg if
  $\a{A}\cong \a{C}\sqcup\MuN$, then $\a{D}=\a{B}\sqcup\MuN$, etc.
 
  Let us now assume that $\a{A}\cong \a{C}+_{c_1}n$ and $\a{B}\cong
  \a{C}+_{c_2}m$ for some natural numbers $n,m\in\omega$ and
  $c_1,c_2\in\a{C}$, and let $\alpha:\a{C}+_{c_1}n \to \a{A}$ and
  $\beta:\a{C}+_{c_2}m \to \a{B}$ be the corresponding isomorphisms. Let
  $a=\alpha(c_2)$ and $\a{D}=\a{A}+_{a}m$. Then, by
  \eqref{eq-tail}, it follows that $\a{D}\cong \a{B}+_{b}n$ where
  $b=\beta(c_1)$.
\end{proof}

Let us note that the converse of \eqref{eq-amalg} fails in the network
$\mathcal{N}(\CK)$ of monounary algebras because, for example, if
$\a{A}=\MuN$ and $\a{B}=\Mpl{n}{l}$ for some $n>0$ and $l$, then there
is no $\a{C}$ which is embeddable into both $\a{A}$ and $\a{B}$, but
both $\a{A}$ and $\a{B}$ are largely embeddable into
$\a{D}=\MuN\sqcup\Mpl{n}{l}$.  However, if we restrict the class $\CK$
to be the class of connected monounary algebras, then we have both
directions of \eqref{eq-amalg}.  In other words, the class of all
monounary algebras has the push-up property but lacks the push-down
property, while the class of connected monounary algebras has both
properties.

\begin{prop}\label{prop-diffcores}
  Let $\a{A}$ and $\a{B}$ be two connected monounary algebras. If
  $\a{A}$ and $\a{B}$ have isomorphic cores, then
  \begin{equation}\label{eq-samecore}
\dis(\a{A},\a{B})\le\MGen(\a{A})+\MGen(\a{B})-1.
  \end{equation}
If the cores of $\a{A}$ and $\a{B}$ are not isomorphic, then
  \begin{equation}\label{eq-diffcores}
\dis(\a{A},\a{B})=\MGen(\a{A})+\MGen(\a{B}).
  \end{equation}
\end{prop}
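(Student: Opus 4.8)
The plan is to treat the two cases separately, in both cases using the machinery of Proposition~\ref{lem-largeMu} to translate ``large embedding'' into concrete operations on monounary algebras (disjoint union with a single-generated algebra, or attaching a tail via $+_a m$), and using Proposition~\ref{embdgen} together with (MU3) to get lower bounds on $\dis$.

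\textbf{The same-core case.} Suppose $\a{A}$ and $\a{B}$ have isomorphic cores; call this common core $\a{C}_0$ (so $\a{C}_0\cong\MuN$ or $\a{C}_0\cong\Core{n}$ for some $n\ne 0$). If either of $\MGen(\a{A}),\MGen(\a{B})$ is $\infty$ the inequality~\eqref{eq-samecore} is vacuous, so assume both are finite; by (MU3a) neither algebra has an infinite tail. I would build an explicit path. Starting from $\a{A}$, repeatedly delete a generator: since $\a{A}$ has no infinite tail, each step of the form $\a{A}'+_{a}1\dda\a{A}'$ (removing one leaf of an in-tree, possibly shortening a tail) is a large embedding by Proposition~\ref{lem-largeMu}, and after $\MGen(\a{A})-1$ such steps we can reach a single-generated connected algebra with core $\a{C}_0$. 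By (MU4) and the description in Proposition~\ref{lem-largeMu}, there is essentially one single-generated connected algebra with a given core up to ``where the tail hangs'', and in fact any two single-generated connected monounary algebras with isomorphic cores are connected by one large-embedding step through a common $+_a m$ extension — or, more cleanly, both are largely embeddable into a common one-generated algebra of that core type, giving $\dis\le 2$ between them, but one can do better: I expect to arrange that $\a{A}$ reduces in $\MGen(\a{A})-1$ steps to a one-generated algebra $\a{D}_0$ which is also reachable from $\a{B}$ in $\MGen(\a{B})-1$ steps. Concretely: let $\a{D}_0$ be the one-generated connected algebra obtained from $\a{C}_0$ by attaching a tail at a core element whose length exceeds all relevant tail lengths appearing in $\a{A}$ and $\a{B}$; then $\a{A}\addn{\,}\cdots$ deletions land in something embeddable-with-one-generator into $\a{D}_0$, and symmetrically for $\a{B}$. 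Reconciling the bookkeeping so that the total $\dis$ comes out exactly $\MGen(\a{A})+\MGen(\a{B})-1$ (and not $+1$ more) is the delicate point; the trick is to route both sides through one shared ``fully grown'' one-generated algebra so the two middle one-generated algebras coincide, i.e.\ the path is $\a{A}\dda\cdots\dda\a{D}_0\add\cdots\add\a{B}$ of blue-length $(\MGen(\a{A})-1)+(\MGen(\a{B})-1)+1$. Wait — that is $\MGen(\a{A})+\MGen(\a{B})-1$, as desired.

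\textbf{The different-cores case.} Here I claim equality~\eqref{eq-diffcores}. For the upper bound $\dis(\a{A},\a{B})\le\MGen(\a{A})+\MGen(\a{B})$: reduce $\a{A}$ to its core $\a{C}_{\a{A}}$ in $\MGen(\a{A})-1$ large-embedding steps as above; reduce $\a{B}$ to its core $\a{C}_{\a{B}}$ in $\MGen(\a{B})-1$ steps; since the two cores are nonisomorphic, they are distinct connected one-generated algebras, and $\dis(\a{C}_{\a{A}},\a{C}_{\a{B}})\le 2$ via the common large extension $\a{C}_{\a{A}}\sqcup\a{C}_{\a{B}}$ — no, that disjoint union has two components. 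Rather, use that $\a{C}_{\a{A}}$ and $\a{C}_{\a{B}}$ are each largely embeddable into a suitable connected one-generated algebra only when their cores agree; since they do not, I instead bound $\dis(\a{C}_{\a{A}},\a{C}_{\a{B}})\le 2$ by $\a{C}_{\a{A}}\dda ?\add\a{C}_{\a{B}}$ — this also fails. The correct accounting: the total is $(\MGen(\a{A})-1)+(\MGen(\a{B})-1)+\dis(\a{C}_{\a{A}},\a{C}_{\a{B}})$ and I must show $\dis(\a{C}_{\a{A}},\a{C}_{\a{B}})=2$, which combined gives $\MGen(\a{A})+\MGen(\a{B})$. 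That two one-generated connected algebras with different cores are at distance exactly $2$: distance $\ge 2$ since neither embeds in the other (different cores survive embeddings, by uniqueness of the core in (MU2) and Proposition~\ref{embdgen}-style reasoning on what embeds where), and distance $\le 2$ because both embed as large subalgebras into $\MuN\sqcup$(the core of the other) or some single two-step intermediary; here I would actually use $\a{C}_{\a{A}}\add\a{C}_{\a{A}}\sqcup\a{C}_{\a{B}}\dda\a{C}_{\a{B}}$ — but that intermediary has two components and blue-length of that path is $2$, which is fine since blue-length counts blue edges, not components. So $\dis\le 2$. For the matching lower bound $\dis(\a{A},\a{B})\ge\MGen(\a{A})+\MGen(\a{B})$: this is where the push-up property (Proposition~\ref{prop-amalg}) and Proposition~\ref{prop:pushpath}(1) enter. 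Any shortest path can be pushed into the shape $\a{A}\addn{p}\a{D}\dda\cdots\dda^{q}\a{B}$ with $\dis(\a{A},\a{B})=p+q$ for some monounary algebra $\a{D}$. Then $\a{A}$ and $\a{B}$ both embed into $\a{D}$ (large embeddings are embeddings), so by Proposition~\ref{embdgen} $\MGen(\a{D})\ge\MGen(\a{A})$ and $\MGen(\a{D})\ge\MGen(\a{B})$; moreover adding one generator raises $\MGen$ by at most $1$ and lowers it by at most... — more precisely, $|\MGen(\a{X})-\MGen(\a{Y})|\le 1$ whenever $\a{X}\add\a{Y}$, which follows from Proposition~\ref{lem-largeMu} and (MU3) since the operation either adds a disjoint one-generated summand or attaches one tail. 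Hence $\MGen(\a{D})\le\MGen(\a{A})+p$ and $\MGen(\a{D})\le\MGen(\a{B})+q$. That alone gives $p+q\ge ?$ — not immediately the bound. The extra ingredient I need is that because the cores differ, the intermediary $\a{D}$ must contain (a copy of) the core of $\a{A}$ and the core of $\a{B}$ as sub-cores of distinct connected components of $\a{D}$ (a single connected monounary algebra has a unique core by (MU2)), so $\a{D}$ has at least two connected components ``carrying'' generators that neither $\a{A}$'s side nor $\a{B}$'s side can share; tracking that with (MU3) forces $\MGen(\a{D})\ge\MGen(\a{A})+\MGen(\a{B})-\min(p,q)$-type inequalities that close the gap.

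\textbf{Main obstacle.} The routine parts are the upper bounds, which are just exhibiting paths using Proposition~\ref{lem-largeMu}. The genuinely delicate step is the lower bound in the different-cores case: making precise the claim that an intermediary $\a{D}$ through which both $\a{A}$ and $\a{B}$ are largely embedded (after pushing the path up) must ``pay'' separately for the generators of $\a{A}$ and of $\a{B}$ because their cores live in different connected components of $\a{D}$. The clean way to do this is to first prove a lemma: if $\a{A}\add\a{X}$ then the multiset of isomorphism types of cores of connected components of $\a{X}$ refines that of $\a{A}$ in a controlled way (at most one new core appears, and it is the core of a one-generated algebra), and simultaneously track $\MGen$ componentwise via (MU3). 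Feeding this down a pushed-up path and using Proposition~\ref{embdgen} componentwise should yield $p\ge\MGen(\a{A})$... no, $p\ge$ the number of generators $\a{D}$ has ``beyond'' $\a{A}$, and symmetrically; since the $\a{A}$-core-component of $\a{D}$ and the $\a{B}$-core-component are distinct, these beyond-counts add up, forcing $p+q\ge\MGen(\a{A})+\MGen(\a{B})$. I expect this componentwise core-bookkeeping lemma to be the crux, and everything else to be assembly.
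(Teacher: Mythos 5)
Your plan follows the paper's proof in all essentials: the upper bounds come from peeling off generators one at a time (the paper phrases the same-core case as an induction on $\MGen(\a{A})+\MGen(\a{B})$, with base case two one-generated algebras of a common core at distance at most $1$) and from passing through the disjoint union $\a{A}\sqcup\a{B}$, while the lower bound comes from pushing the path up to a common $\a{D}$ via Propositions~\ref{prop-amalg} and~\ref{prop:pushpath}, observing that non-isomorphic cores force the images of $\a{A}$ and $\a{B}$ into distinct components of $\a{D}$, and invoking Proposition~\ref{embdgen}. The one place you overcomplicate is the close of the lower bound: no componentwise core-multiset bookkeeping lemma is needed, because once $\a{D}$ contains a copy of $\a{A}\sqcup\a{B}$ with the two copies in different components, the $n$ generators added along $\a{B}\addn{n}\a{D}$ must by themselves generate every component of $\a{D}$ disjoint from the image of $\a{B}$, and those components contain a copy of $\a{A}$, so Proposition~\ref{embdgen} gives $n\ge\MGen(\a{A})$ directly (and symmetrically $m\ge\MGen(\a{B})$), which is exactly how the paper concludes.
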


\begin{proof}
In general, it is true that 
\begin{equation}\label{eq-generalineq}
\dis(\a{A},\a{B})\le\MGen(\a{A})+\MGen(\a{B})  
\end{equation}
because $\a{B}\addn{\MGen(\a{A})}\a{A}\sqcup\a{B}$ and
$\a{A}\addn{\MGen(\a{B})}\a{A}\sqcup\a{B}$ if both $\a{A}$ and $\a{B}$
are finitely generated, and this inequality reduces to the trivial
$\dis(\a{A},\a{B})\le \infty$ otherwise.

Assume first that the cores of $\a{A}$ and $\a{B}$ are isomorphic. Now, we need to prove \eqref{eq-samecore}. If one of the algebras $\a{A}$ and $\a{B}$ is infinitely generated, then there is nothing to prove. Assume that both $\a{A}$ and $\a{B}$ are finitely generated. We
are going to prove \eqref{eq-samecore} by induction on
$k=\MGen(\a{A})+\MGen(\a{B})$.  

If $k=2$, then either
$\a{A}\cong\a{B}\cong\MuN$ or $\a{A}\cong\Mpl{n}{p}$ and
$\a{B}\cong\Mpl{n}{q}$ for some natural numbers $p$, $q$ and $n$. So,
we have $\dis(\a{A},\a{B})\le1$ in both cases. Inductively, assume
that \eqref{eq-samecore} is true for any two connected monounary
algebras $\a{A}'$ and $\a{B}'$ having isomorphic cores and satisfying
$\MGen(\a{A}')+\MGen(\a{B}')=N$ for some natural $N\geq 2$. Now, suppose that
$\MGen(\a{A})+\MGen(\a{B})=N+1$. Then at least one of the two
algebras, say $\a{A}$, has more than one generator. Thus, one can
find a subalgebra $\a{A}'$ of $\a{A}$ such that $\MGen(\a{A}')=\MGen(\a{A})-1$
and $\a{A}'\add \a{A}$. Hence, by the induction hypothesis, we
have $\dis(\a{A'},\a{B})\le \MGen(\a{A}')+\MGen(\a{B})-1$. Therefore,
$$\dis(\a{A},\a{B})\leq \dis(\a{A},\a{A}')+ \dis(\a{A}',\a{B})\le \MGen(\a{A}')+\MGen(\a{B})=
  \MGen(\a{A})+\MGen(\a{B})-1,$$
and this is what we wanted to show.

Assume now that the cores of $\a{A}$ and $\a{B}$ are not isomorphic.
To prove equation~\eqref{eq-diffcores}, by the observation in
\eqref{eq-generalineq}, it is enough to show that
$\dis(\a{A},\a{B})\ge\MGen(\a{A})+\MGen(\a{B})$. If
$\dis(\a{A},\a{B})=\infty$, then we have nothing to prove. So, let
us assume that $\dis(\a{A},\a{B})<\infty$. Then, by
Proposition~\ref{prop-amalg} and Proposition~\ref{prop:pushpath}, there
is a monounary algebra $\a{D}$ and $n,m\in\omega$ such that
$\dis(\a{A},\a{B})=n+m$, $\a{A}\addn{m}\a{D}$ and
$\a{B}\addn{n}\a{D}$. Hence, $\a{D}$ contains isomorphic images of both
$\a{A}$ and $\a{B}$. Since the cores of $\a{A}$ and $\a{B}$ are not
isomorphic, these images should be contained in different components
of $\a{D}$. Therefore, $\a{D}$ must also contain an isomorphic image
of $\a{A}\sqcup\a{B}$. Thus, $n\geq \MGen(\a{A})$ because by Proposition~\ref{embdgen} one have to add
at least an isomorphic copy of $\a{A}$ to $\a{B}$ in order to generate
$\a{D}$. Similarly, we must have $m\ge\MGen(\a{B})$. Consequently,
$\dis(\a{A},\a{B})=n+m\ge\MGen(\a{A})+\MGen(\a{B})$, and we are
done.
\end{proof}

Now, we will prove a formula that gives the generator distance between two monounary algebras (with finitely many connected components) in terms of the distance between their components. Let $\a{A}$ and $\a{B}$ be two monounary algebras, each of which has finitely many connected components. Assume that $\dis(\a{A},\a{B})$ is finite, then by Proposition~\ref{prop:pushpath} there are $p,q\in\omega$ and $\a{D}\in\CK$ such that
$$\a{A}\addn{p}\a{D}, \ \ \a{B}\addn{q}\a{D} \ \  \text{ and } \ \ \dis(\a{A},\a{B})=p+q.$$

Let $\a{A}-\a{D}-\a{B}$ denote the corresponding path from $\a{A}$ to $\a{B}$ through the algebra $\a{D}$.  Let $\alpha:\a{A}\to\a{D}$ and
$\beta:\a{B}\to\a{D}$ be the respective compositions of large embeddings
given by relations $\a{A}\addn{p}\a{D}$ and $\a{B}\addn{q}\a{D}$. We will refer to the condition $\dis(\a{A},\a{B})=p+q$ as \emph{the \textbf{minimality condition}}.

Note that two elements that belong to the same connected component of a monounary algebra must have their images under any homomorphism again in the same connected component. Hence, each component of $\a{A}$ and $\a{B}$ is
mapped into a connected component of $\a{D}$ by $\alpha$ and $\beta$. We
also have that $\alpha$ maps the different components of $\a{A}$ into
different components of $\a{D}$ because $\alpha$ is one-to-one, and the
core of a component of $\a{A}$ has to be mapped into the core of a component of $\a{D}$. The
same holds for $\beta$ and the components of $\a{B}$ for the same reasons. 

Thus, there are injective functions $\Psi_{\a{A}}$ and $\Psi_{\a{B}}$ from the components of $\a{A}$ and the components of $\a{B}$, respectively, into the components of $\a{D}$. For easy reference and visualization, we use a coloring style that codes these injective functions as follows.%

\begin{itemize}
\item[{\color{red}{$\bullet$}}] We color each component of $\a{D}$ that appeared in both the range of $\Psi_{\a{A}}$ and the range of $\Psi_{\a{B}}$ with a different shade of {\color{red}{red}}.
\item[{\color{blue}{$\bullet$}}] We color each component of $\a{D}$ that appeared in the range of $\Psi_{\a{A}}$ but not in the range of $\Psi_{\a{B}}$ with a different shade of {\color{blue}{blue}}.
\item[{\color{Green}{$\bullet$}}] We color each component of $\a{D}$ that appeared in the range of $\Psi_{\a{B}}$ but not in the range of $\Psi_{\a{A}}$ with a different shade of {\color{Green}{green}}.
\item[{\color{gray}{$\bullet$}}] We color each component of $\a{D}$ that appears neither in the range of $\Psi_{\a{B}}$ nor in the range of $\Psi_{\a{A}}$ with a different shade of {\color{gray}{gray}}.
\end{itemize}

By Proposition~\ref{lem-largeMu}, we know that each large embedding in $\a{A}\addn{p}\a{D}$ adds a piece of a component of the big algebra $\a{D}$ to the smaller algebra $\a{A}$; it adds either a new core or a tail to an existing component. The same also is true for each large embedding in $\a{B}\addn{q}\a{D}$. 

\begin{itemize}
    \item We color each large embedding step in $\a{A}-\a{D}-\a{B}$ with the same color of the component of $\a{D}$ to which this large embedding is contributing in its construction.
    \item We color each component of $\a{A}$ and $\a{B}$ with the same color of its image under the injective functions $\Psi_{\a{A}}$ and $\Psi_{\a{B}}$, respectively.
\end{itemize}

\begin{figure}[!htb]
    \centering
\begin{tikzpicture}[]
      \tikzstyle{bullet}=[draw,circle,,inner sep=3]
	\tikzstyle{steps}=[very thick, dash pattern=on 25pt off 10pt,postaction={decorate,
    decoration={markings,
    mark=between positions 29pt and 1 step 35pt with {\arrow{>};}}}, >=stealth]
	
	\tikzstyle{3steps}=[very thick, dash pattern=on 37pt off 9pt,postaction={decorate,
    decoration={markings,
    mark=between positions 41pt and 1 step 45pt with {\arrow{>};}}}, >=stealth]

	\tikzstyle{2steps}=[very thick, dash pattern=on 60pt off 10pt,postaction={decorate,
    decoration={markings,
    mark=between positions 64pt and 1 step 70pt with {\arrow{>};}}}, >=stealth]

\draw[dashed] (-4.93,0.1) to node[above left] {$\alpha$} (-0.93,3.1);
\draw[dashed] (1.93,-0.1) to (5.93,2.9);
  
\draw[dashed] (3.07,-0.1) to(-0.93,2.9);
\draw[dashed] (9.93,0.1) to  node[above right] {$\beta$}  (5.93,3.1);  	

      \draw (2.5,3)  ellipse [x radius=3.5,y radius=0.5]  node[above left=15]{$\a{D}$};
      \draw (-1.5,0)  ellipse [x radius=3.5,y radius=0.5]  node[left=100]{$\a{A}$};
      \draw (6.5,0)  ellipse [x radius=3.5,y radius=0.5]  node[right=100]{$\a{B}$};

      \foreach \x in {0,1,...,5}{
	      \coordinate (d\x) at (\x,3) ;
            \coordinate (b\x) at (\x+4,0) ;
               \coordinate (a\x) at (\x-4,0);
          }

	\node[bullet,blue,fill] (na1) at (a0) {};
	\node[bullet,blue!65,fill] (na2) at (a1) {};
	\node[bullet,red!85!black,fill] (na3) at (a2) {};
	\node[bullet,red,fill] (na4) at (a3) {};
    \node[bullet,green!42!black,very thick,dotted] (na5) at (a4) {};	
	\node[bullet,green!75!black,very thick,dotted] (na6) at (a5) {};	

	\node[bullet,blue,very thick, dotted] (nb00) at (b0) {};
	\node[bullet,blue!65,very thick,dotted] (nb0) at (b1) {};
	\node[bullet,red!85!black,fill] (nb1) at (b2) {};
	\node[bullet,red,fill] (nb2) at (b3) {};
	\node[bullet,green!42!black,fill] (nb3) at (b4) {};
	\node[bullet,green!75!black,fill] (nb4) at (b5) {};

	\node[bullet,blue,fill] (nd1) at (d0) {};
	\node[bullet,blue!65,fill] (nd2) at (d1) {};
	\node[bullet,red!85!black,fill] (nd3) at (d2) {};
	\node[bullet,red,fill] (nd4) at (d3) {};
	\node[bullet,green!42!black,fill] (nd5) at (d4) {};
	\node[bullet,green!75!black,fill] (nd6) at (d5) {};

	\draw[->, thick, red!85!black,shorten <=2, shorten >=2,3steps] (na3) to (nd3);     
	\draw[->, thick, red,shorten <=2, shorten >=2,2steps] (na4) to (nd4);   
	\draw[->, thick, green!42!black,shorten <=2, shorten >=2,2steps] (na5) to (nd5);     
    \draw[->, thick, green!75!black,shorten <=2, shorten >=2,steps] (na6) to (nd6);     

	\draw[->, thick, blue,shorten <=2, shorten >=2,steps] (nb00) to (nd1);     
 	\draw[->, thick, blue!65,shorten <=2, shorten >=2,steps] (nb0) to (nd2);    
	\draw[->, thick, red!85!black,shorten <=2, shorten >=2,steps] (nb1) to (nd3);     
	\draw[->, thick, red,shorten <=2, shorten >=2,3steps] (nb2) to (nd4);     
    
 \draw[red!85!black,thick, shorten <=5,shorten >=5, loosely dashed] (na3) to[out=-45, in=225] (nb1);
 \draw[red,thick, shorten <=5,shorten >=5, loosely dashdotdotted] (na4) to[out=-45, in=225] (nb2);   

\node at (2.5,-2) {$P$};
	
\end{tikzpicture}

    \caption{Coloring the components of the algebras $\a{A}$, $\a{B}$ and $\a{D}$, and the large embedding steps in the path $\a{A}-\a{D}-\a{B}$\label{fig-perm}}
\end{figure}
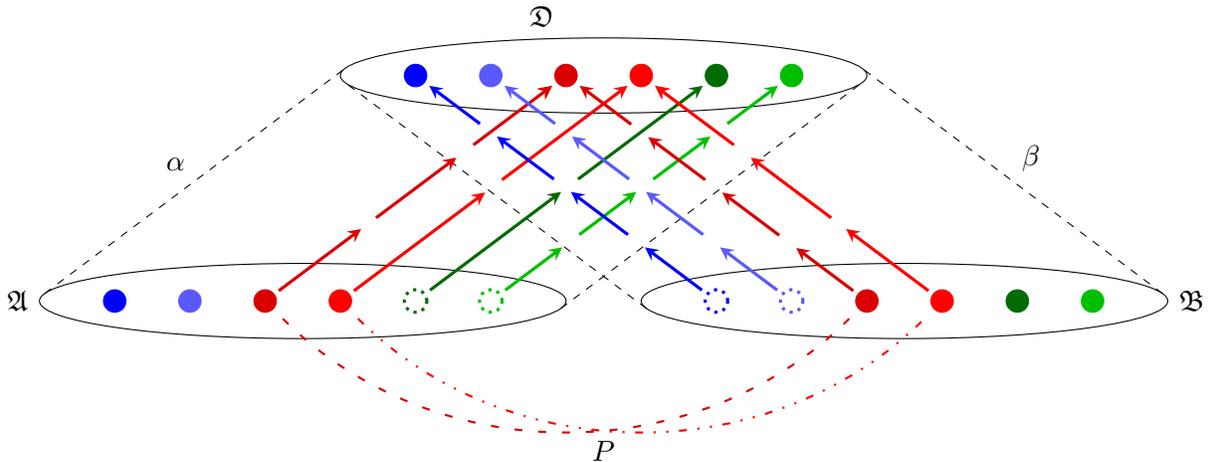

Note that there is no green component of $\a{A}$, and there is no blue component of $\a{B}$. It also turned out that there cannot be a gray component of $\a{D}$ and/or a gray large embedding in $\a{A}-\a{D}-\a{B}$, see Figure~\ref{fig-perm}. This is true because by deleting all the gray components and the gray large embeddings we would get a shorter path; which is a contradiction. Note that a large embedding step of some color affects only the component of $\a{D}$ that has the same color. This fact will be used quite frequently, so we give it a name. We call this feature \emph{the \textbf{independence of the colors}}.

Let $R$, $B$ and $G$ be the sets of all shades of red, blue and green, respectively, that were used in the above coloring. The component of $\a{A}$ which is colored by color $c$ is denoted by $\a{A}_c$. The same applies to the components of the monounary algebras $\a{B}$ and $\a{D}$.  

Let $b\in B$ be a blue color. The total number of the $b$-colored large embedding steps in $\a{B}\addn{q}\a{D}$ gives the minimum number of generators of $\a{D}_b$; this is the shortest possible way to build $\a{D}_b$ from scratch. If not, then one can use the independence of colors to derive a contradiction with the minimality condition. Similar conclusions can be drawn regarding the green components of $\a{D}$ and the green large embeddings in $\a{A}\addn{p}\a{D}$.

If $\a{A}_b$ is not isomorphic to $\a{D}_b$, then we can use Proposition~\ref{embdgen} and the independence of colors to shorten the path $\a{A}-\a{D}-\a{B}$ by replacing the component $\a{D}_b$ with a smaller one isomorphic to $\a{A}_b$ in the big algebra $\a{D}$, and this would contradict the minimality condition. Thus, there is no $b$-colored large embedding in $\a{A}\addn{p}\a{D}$. The same holds for the green components of $\a{B}$ and $\a{D}$. Hence, 

\begin{enumerate}
\item[(A)] For each $b\in B$ and each $g\in G$, we have
    \begin{align*}
    \text{the number of all $b$-colored large embedding steps in $\a{A}-\a{D}-\a{B}$} & =  \MGen(\a{D}_b),\\
    \text{the number of all $g$-colored large embedding steps in $\a{A}-\a{D}-\a{B}$} & =  \MGen(\a{D}_g).
    \end{align*}
\item[(B)] For each $b\in B$ and each $g\in G$, we have 
$$\a{A}_b\cong\a{D}_b \ \ \& \ \ \a{B}_g\cong \a{D}_g.$$
\end{enumerate}

Let $b\in B$ and $g\in G$. All the $b$-colored and the $g$-colored large embeddings in $\a{A}-\a{D}-\a{B}$ form a path between $\a{A}_b$ and $\a{B}_g$. This can be guaranteed by the independence of colors. By (A) and (B), the length of this path is $\MGen(\a{A}_b)+\MGen(\a{B}_g)$. On the other hand, this path should have the shortest length among all paths connecting $\a{A}_b$ and $\a{B}_g$ in $\mathcal{N}(\CK)$. Otherwise, by the independence of colors, one can construct a path between $\a{A}$ and $\a{B}$ that is shorter than $\a{A}-\a{D}-\a{B}$. Thus, Proposition~\ref{prop-diffcores} implies the observation (C) below.

\begin{enumerate}
    \item[(C)] For each $b\in B$ and each $g\in G$, $\a{A}_b$ and $\a{B}_g$ must have none isomorphic cores, and
    $$\dis(\a{A}_b,\a{B}_g)=\MGen(\a{A}_b)+\MGen(\a{B}_g).$$
\end{enumerate}   

It is time now to talk about the red components. Again, the next observation follows by the minimality condition and the independence of colors: 

\begin{enumerate}
    \item[(D)] For each $r\in R$, the cores of $\a{A}_r$, $\a{B}_r$ and $\a{D}_r$ must be isomorphic, and 
    $$\text{the number of all $r$-colored large embedding steps in $\a{A}-\a{D}-\a{B}$} = \dis(\a{A}_r,\a{B}_r).$$
\end{enumerate}  

Therefore, by (A), (B) and (C), we have 
\begin{equation}\label{eq:discomp}
    \dis(\a{A},\a{B})=\sum_{b\in B} \MGen(\a{A}_b) + \sum_{g\in G} \MGen(\a{B}_g) + \sum_{r\in R}\dis(\a{A}_r,\a{B}_r).
\end{equation}

Now, we are ready to prove our main Theorem.

\begin{proof}[Proof of Theorem~\ref{prop-perm}]
We recall the assumptions of the theorem. Let $n\le m$ be two natural numbers, let $\a{A}$ and $\a{B}$ be two
monounary algebras having $n$-many and $m$-many connected components, and let $\displaystyle\a{A}=\bigsqcup_{i<n} \a{A}_i$ and
$\displaystyle\a{B}=\bigsqcup_{j<m} \a{B}_j$ be their decomposition into disjoint unions of connected monounary algebras. We need to prove
that 
\begin{equation}\label{eq:distance}
\dis(\a{A},\a{B})=\min_{\pi\in
  S_m}\left(\sum_{i<n} \dis\left(\a{A}_i,\a{B}_{\pi(i)}\right)
+ \sum_{n\leq k<m}\MGen\left(\a{B}_{\pi(k)}\right) \right).
\end{equation}
It is easy to see that 
\begin{equation*}
\dis(\a{A},\a{B}) \le \sum_{i<n}
\dis\left(\a{A}_i,\a{B}_{\pi(i)}\right) +
\sum_{n\leq k<m}\MGen\left(\a{B}_{\pi(k)}\right) \ \ \ \text{ for all } \pi\in S_m.
\end{equation*}
Indeed, for $\pi\in S_m$, the algebra $\a{B}$ can be reached from $\a{A}$ by joining the
$\dis\left(\a{A}_i,\a{B}_{\pi(i)}\right)$-long paths going from $\a{A_i}$ to $\a{B_{\pi(i)}}$, for each $i< n$, and then continuing by building the remaining components of $\a{B}$ starting from scratch in $\displaystyle\sum_{n\leq k<m}\MGen\left(\a{B}_{\pi(k)}\right)$-many steps, see
Figure~\ref{fig-perm0}; let $\a{A}-\pi-\a{B}$ denote this path between
$\a{A}$ and $\a{B}$ which is based on  the permutation $\pi$.

 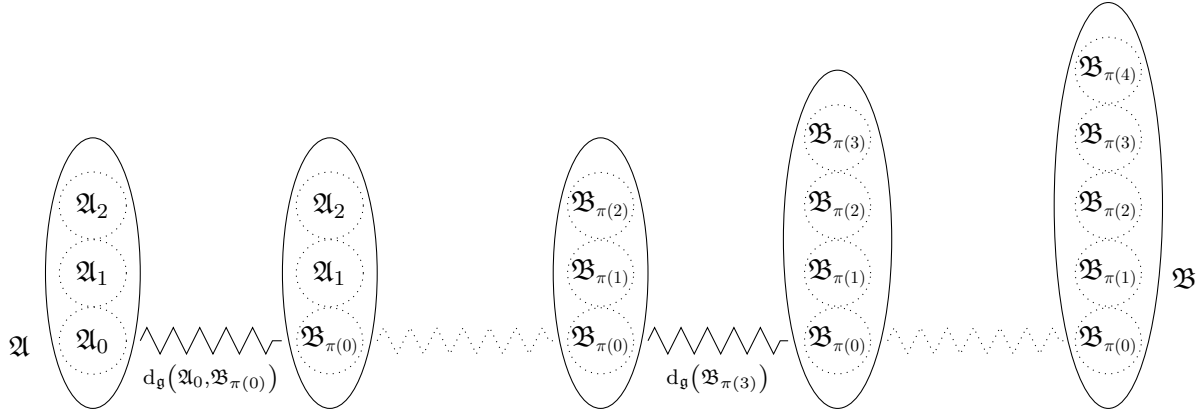
\begin{figure}
 \centering
 \begin{tikzpicture}[scale=0.9]
    \tikzstyle{bullet}=[draw,dotted,circle,inner sep=0]
    \tikzstyle{sago}=[decorate, decoration={zigzag, amplitude=5}, draw=black]

\draw (15,2)  ellipse [x radius=0.8,y radius=3]  node[below right=20]{$\a{B}$};
\draw (11,1.5)  ellipse [x radius=0.8,y radius=2.5]  node[above left=15]{};     
\draw (7.5,1)  ellipse [x radius=0.7,y radius=2]  node[below right=15]{};
\draw (3.5,1)  ellipse [x radius=0.7,y radius=2]  node[below right=15]{};
\draw (0,1)  ellipse [x radius=0.7,y radius=2]  node[below left=20]{$\a{A}$};

\foreach \x in {0,1,...,4}{
	\node[bullet] (b\x) at (15,\x) {$\a{B}_{\scalebox{0.6}{$\pi({\x})$}}$};
	\ifnum \x<4	     
		\node[bullet] (b0\x) at (11,\x)  {$\a{B}_{\scalebox{0.6}{$\pi({\x})$}}$} ;
	\fi
     \ifnum \x<3
		\node[bullet,inner sep =3.5] (a\x) at (0,\x)  {$\a{A}_\x$};
		 \ifnum \x>0 
			\node[bullet,inner sep =3.5] (a1\x) at (3.5,\x)  {$\a{A}_\x$};
		\else		
			\node[bullet] (a1\x) at (3.5,\x) {$\a{B}_{\scalebox{0.6}{$\pi({\x})$}}$};
		\fi
		\node[bullet] (a2\x) at (7.5,\x)  {$\a{B}_{\scalebox{0.6}{$\pi({\x})$}}$};	       
	 \fi
          }

\draw[sago] (.7,0) to node[below=5]{$\scriptstyle \dis\left(\a{A}_0,\a{B}_{\pi(0)}\right)$} (2.79,0);
\draw[sago,dotted] (4.2,0) to (6.85,0);
\draw[sago] (8.2,0) to node[below=5]{$\scriptstyle \dis\left(\a{B}_{\pi(3)}\right)$} (10.27,0);
\draw[sago,dotted] (11.74,0) to (14.4,0);
\end{tikzpicture}
 \caption{The path $\a{A}-\pi-\a{B}$ between $\a{A}$
   and $\a{B}$ based on the permutation $\pi$ \label{fig-perm0}}
\end{figure}

So we have that $\dis(\a{A},\a{B})$ is less than or equal to the minimum in the right-hand side of \eqref{eq:distance}. Consequently, if $\dis(\a{A},\a{B})=\infty$, we have nothing to prove. Let us assume that $\dis(\a{A},\a{B})<\infty$.  By Proposition~\ref{prop:pushpath}, there is a monounary algebra $\a{D}$ and $p,q\in\omega$ such
that 
$$\a{A}\addn{p}\a{D}, \ \ \a{B}\addn{q}\a{D} \ \  \text{ and } \ \ \dis(\a{A},\a{B})=p+q.$$
Now, we adopt the coloring system given in Figure~\ref{fig-perm} and the related discussion and terminology. We have two types of indexes for the components of $\a{A}$ and $\a{B}$; namely indexes induced by the component decomposition and indexes induced by the coloring. We define a correspondence $P$ between certain indexes $i<n$ of components of $\a{A}$ to that of $\a{B}$ as follows. The domain of $P$ is the set of all indexes of the red components of $\a{A}$. Let $\a{A}_i$ be a red component of $\a{A}$, then we define $P(i)=j$ where $j$ is the index of the component of $\a{B}$ that have exactly the same color as $\a{A}_i$. See Figure~\ref{fig-perm}. Pick a permutation $\pi\in S_m$ that extends the correspondence $P$. Such a permutation exists because $n\leq m$. For each $i< n\leq k< m$, we have
$$\a{A}_i \text{ is red }\Longleftrightarrow \a{B}_{\pi(i)} \text{ is red}, \ \ \ \ \a{A}_i \text{ is blue }\Longleftrightarrow \a{B}_{\pi(i)} \text{ is green}, \ \ \ \ 
\text{and } \ \ \ \ \a{B}_{\pi(k)} \text{ is green}.$$

Now, by \eqref{eq:discomp} and (C), we have 
\begin{align*} 
\sum_{i<n}
\dis\left(\a{A}_i,\a{B}_{\pi(i)}\right) & +
\sum_{n\leq k<m}\MGen\left(\a{B}_{\pi(k)}\right) \\
& = \sum_{\substack{i<n\\ \a{A}_i\text{ is blue}}}
\dis\left(\a{A}_i,\a{B}_{\pi(i)}\right) +
\sum_{\substack{i<n\\ \a{A}_i\text{ is red}}}
\dis\left(\a{A}_i,\a{B}_{\pi(i)}\right) +\sum_{n\leq k<m}\MGen\left(\a{B}_{\pi(k)}\right)\\
& = \sum_{\substack{i<n\\ \a{A}_i\text{ is blue}}}
d\left(\a{A}_i\right) + \sum_{\substack{i<n\\ \a{A}_i\text{ is blue}}}
d\left(\a{B}_{\pi(i)}\right) +\sum_{n\leq k<m}\MGen\left(\a{B}_{\pi(k)}\right)+\sum_{\substack{i<n\\ \a{A}_i\text{ is red}}}
\dis\left(\a{A}_i,\a{B}_{\pi(i)}\right)\\
&= \sum_{\substack{i<n\\ \a{A}_i\text{ is blue}}}
d\left(\a{A}_i\right) + \sum_{\substack{j<m\\ \a{B}_j\text{ is green}}}\MGen\left(\a{B}_{j}\right)+\sum_{\substack{i<n\\ \a{A}_i\text{ is red}}}
\dis\left(\a{A}_i,\a{B}_{\pi(i)}\right)\\
&= \sum_{b\in B} \MGen(\a{A}_b) + \sum_{g\in G} \MGen(\a{B}_g) + \sum_{r\in R}\dis(\a{A}_r,\a{B}_r)\\ & = \dis(\a{A},\a{B}).
\end{align*}

Therefore, $\dis(\a{A},\a{B})$ is bigger than or equal to the right-hand side of \eqref{eq:distance}, and we are done.
\end{proof}

Theorem~\ref{prop-perm} deals with the case when the two algebras have
finitely many components. In fact, it is not too difficult to see that for every two monounary algebras $\a{A}$ and $\a{B}$, either $\dis(\a{A},\a{B})=\infty$ or $\a{A}$ and $\a{B}$ differ only in finitely many connected components. This can be proved using the push-up property together with the fact that only one component can be affected by one step of large embedding, see Proposition~\ref{prop-amalg} and Proposition~\ref{lem-largeMu}. When $\a{A}$ and $\a{B}$ differ only in finitely many connected components, Theorem~\ref{prop-perm} can be used to reduce the distance between these algebras to the distance between their connected components.

Now, we can concentrate on the problem of determining the distance between connected monounary algebras. We already have the distance in the case when the algebras have non-isomorphic cores by Proposition~\ref{prop-diffcores}. Let us
consider the case of monounary algebras that have cores isomorphic to
$\Core{n}$ for some non-zero $n\in\omega$. We are going to have a reduction
step similar to what we have made in Theorem~\ref{prop-perm}. To do so, we introduce the notion of tree-algebra decomposition.

\begin{figure}[!ht]
  \centering
\begin{tikzpicture}[scale=0.9]

\tikzset{arrow/.style = {very thick,->,> = stealth}}
\tikzset{point/.style = {shape=circle, fill, inner sep=0, minimum size=3}}

\node[point] (0) at (5.9,2.6) {};
\node[point] (1) at (8.4,-2.3){}; 
\node[point] (2) at (9.6,-0.8){};
\node[point] (3) at (3,0){};
\node[point] (4) at (4.2,0){};
\node[point] (5) at (5.4,0){};
\node[point] (4a) at (3.4,1) {};
\node[point] (4b) at (3.4,-1) {};

\foreach \a in {1,2,...,5}{
\node[point] (c\a) at ([shift={(6.9,0)}]180-\a*360/5: 1.5){};
\draw[arrow]  ([shift={(6.9,0)}]180-\a*360/5+360/5: 1.5) to  (c\a); 
}

\draw[arrow] (0) to (c1);
\draw[arrow] (1) to (c3);
\draw[arrow] (2) to (c3);
\draw[arrow] (3) to (4);
\draw[arrow] (4) to (c5);
\draw[arrow] (4a) to (4);
\draw[arrow] (4b) to (4);

\node[below=3, xshift=2] at (c1) {0};
\node[below left] at (c2) {1};
\node[above left=2,yshift=-2] at (c3) {2};
\node[above=3,xshift=2] at (c4) {3};
\node[right=3] at (c5) {4};

\node[below left] at  (c4) {$\a{A}$};

\begin{scope}[shift={(12,0)}]
\node[point] (t0a) at (0,0.5) {};
\node[point] (t0b) at (0,-0.5) {};
\draw[arrow] (t0a) to (t0b);
\draw[arrow ] (t0b) to[out=-45, in=215, looseness=21] (t0b) node[below left=6]  {$\a{T}
_0$};

\node[point] (t1a) at (2,-0.5) {};
\draw[arrow ] (t1a) to[out=-45, in=215, looseness=21] (t1a) node[below left=6]  {$\a{T}_1$};

\node[point] (t2a) at (4,-0.5) {};
\node[point] (t2b) at (3.5,0.5) {};
\node[point] (t2c) at (4.5,0.5) {};

\draw[arrow] (t2b) to (t2a);
\draw[arrow] (t2c) to (t2a);
\draw[arrow ] (t2a) to[out=-45, in=215, looseness=21] (t2a) node[below left=6]  {$\a{T}_2$};

\node[point] (t3a) at (6,-0.5) {};
\draw[arrow ] (t3a) to[out=-45, in=215, looseness=21] (t3a) node[below left=6]  {$\a{T}_3$};

\node[point] (t4a) at (8,-0.5) {};
\node[point] (t4b) at (8,0.5) {};
\node[point] (t4c1) at  (7.3,1.3) {};
\node[point] (t4c2) at  (8,1.5) {};
\node[point] (t4c3) at  (8.7,1.3) {};

\draw[arrow] (t4b) to (t4a);
\draw[arrow] (t4c1) to (t4b);
\draw[arrow] (t4c2) to (t4b);
\draw[arrow] (t4c3) to (t4b);
\draw[arrow ] (t4a) to[out=-45, in=215, looseness=21] (t4a) node[below left=6]  {$\a{T}_4$};

\draw [decorate,decoration={brace,amplitude=10pt}]
(9,-1.5) -- (-1.5,-1.5) node [black,midway,yshift=-20pt] 
{tree-algebra decomposition of $\a{A}$};

\end{scope}

\end{tikzpicture}

  \caption{Tree-algebra decomposition of a monounary algebra with finite core\label{fig-treedecomp}}
\end{figure}
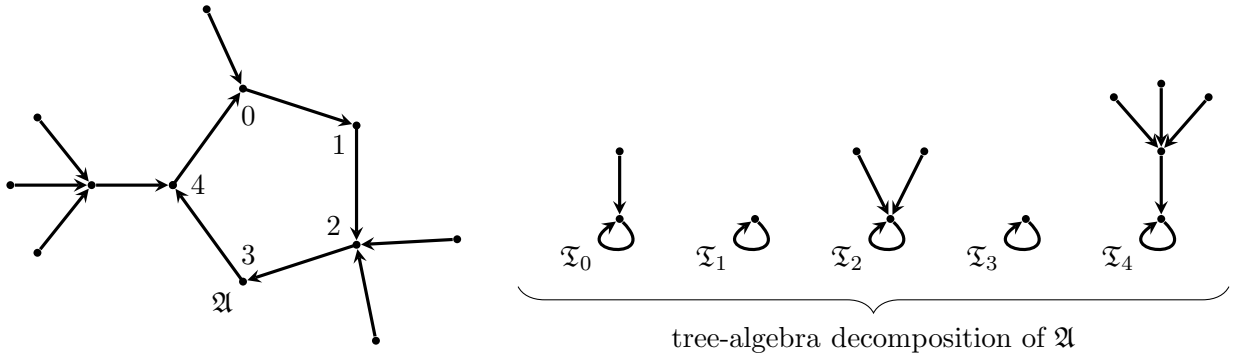

\begin{defn}
By a \emph{\textbf{tree-algebra}}, we mean a connected
monounary algebra whose core is isomorphic to $\Core{1}$. By a \emph{\textbf{forest-algebra}}, we mean a disjoint union of one or more tree-algebras.
\end{defn}

Let $\a{A}=\langle
A,f\rangle$ be a connected monounary algebra, and assume that its core is isomorphic to $\Core{n}$ for some
non-zero $n\in\omega$. Let $\langle T_i: i<n\rangle$ be an
enumeration of the in-trees whose roots $\langle r_i:i<n\rangle$ are elements of $\a{C}$ as described in (MU2). By a
\emph{\textbf{tree-algebra decomposition} of $\a{A}$}, we understand
the sequence $\langle \a{T}_i: i<n\rangle$ of tree-algebras,
where $\a{T}_i=\langle T_i,f_i\rangle$ is the tree-algebra
corresponding to in-tree $T_i$ such that
$f_i(r_i)=r_i$ and $f_i$ maps the other elements of $T_i$ to their
parents. See Figure~\ref{fig-treedecomp} for an
illustrative example.

\begin{prop}\label{prop-tree}
  Let $\a{A}=\langle A,f\rangle$ and $\a{B}=\langle B,g\rangle$ be two
  connected monounary algebras having cores isomorphic to $\Core{n}$,
  for some non-zero $n\in\omega$. Let $\langle \a{T}^A_i:i<n\rangle$ and $\langle \a{T}^B_i: i<n\rangle$ be the tree-algebra decompositions of $\a{A}$ and
  $\a{B}$, respectively.  Then
  \begin{equation*}
    \dis(\a{A},\a{B})=\min_{k<n} \sum_{\substack{i,j<n\\ j=i+k\mod
        n}} \dis\big(\a{T}^A_i,\a{T}^B_{j}\big).
  \end{equation*}
\end{prop}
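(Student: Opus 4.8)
The plan is to mimic the proof of Theorem~\ref{prop-perm}, using a "coloring" argument adapted to the cyclic symmetry of the core $\Core{n}$. First I would establish the easy inequality ``$\le$''. Given any $k<n$, I can build $\a{B}$ from $\a{A}$ by rotating the correspondence of the roots of the core by $k$ positions: for each $i<n$, follow a shortest path from $\a{T}^A_i$ to $\a{T}^B_{j}$ (where $j\equiv i+k\bmod n$), stacking these component-wise modifications on top of the common core $\Core{n}$. Since one step of large embedding affects only a single in-tree rooted at a vertex of the core (this follows from Proposition~\ref{lem-largeMu} and Definition~\ref{def-MUtail}), these paths do not interfere and their lengths add, giving $\dis(\a{A},\a{B})\le\sum_{j=i+k\bmod n}\dis(\a{T}^A_i,\a{T}^B_j)$; minimizing over $k$ gives the ``$\le$'' direction. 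This also settles the case $\dis(\a{A},\a{B})=\infty$ (which, incidentally, cannot happen here since one can always reach $\MuN$-decorated versions, but the inequality is vacuous in any case).

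For the ``$\ge$'' direction, assume $\dis(\a{A},\a{B})<\infty$. By Proposition~\ref{prop-amalg} (push-up) and Proposition~\ref{prop:pushpath}, there is a connected monounary algebra $\a{D}$ with core $\Core{n}$ (the core is preserved because neither disjoint-union steps nor tail-attachment steps change the core, and $\a{D}$ must contain isomorphic copies of both $\a{A}$ and $\a{B}$, which are connected with core $\Core{n}$) and $p,q\in\omega$ with $\a{A}\addn{p}\a{D}$, $\a{B}\addn{q}\a{D}$, and $\dis(\a{A},\a{B})=p+q$. Let $\alpha:\a{A}\to\a{D}$ and $\beta:\a{B}\to\a{D}$ be the composed large embeddings. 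The key structural fact is that each large embedding step either attaches a tail to an existing element or adds a whole new component (the latter is impossible here since everything stays connected), so each of the $p+q$ steps ``belongs'' to exactly one of the $n$ in-trees of $\a{D}$ hanging from the core vertices. Now $\alpha$ induces a bijection between the core of $\a{A}$ and the core of $\a{D}$, hence a bijection $\sigma_A$ of $\{0,\dots,n-1\}$; since $\alpha$ is a core-automorphism of $\Core{n}$, $\sigma_A$ must be a rotation, i.e. $\sigma_A(i)=i+a\bmod n$ for some fixed $a$. Similarly $\beta$ induces a rotation $\sigma_B(j)=j+b\bmod n$. Setting $k=a-b\bmod n$, the in-tree $\a{T}^D_{\ell}$ of $\a{D}$ receives contributions only from the path-steps realizing $\a{T}^A_{\sigma_A^{-1}(\ell)}\add\cdots$ on the $\a{A}$-side and $\a{T}^B_{\sigma_B^{-1}(\ell)}\add\cdots$ on the $\a{B}$-side, and these two one-dimensional (single-component) sub-paths together form a path from $\a{T}^A_{\sigma_A^{-1}(\ell)}$ to $\a{T}^B_{\sigma_B^{-1}(\ell)}$ in the network of connected monounary algebras. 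By the independence of the affected components (exactly the ``independence of the colors'' idea from the proof of Theorem~\ref{prop-perm}), minimality of $p+q$ forces each such sub-path to be a shortest one, so its length is $\dis(\a{T}^A_{\sigma_A^{-1}(\ell)},\a{T}^B_{\sigma_B^{-1}(\ell)})$. Summing over $\ell<n$ and reindexing via $i=\sigma_A^{-1}(\ell)$, $j=\sigma_B^{-1}(\ell)$ (so $j=i+k\bmod n$) gives
\begin{equation*}
\dis(\a{A},\a{B})=p+q=\sum_{\ell<n}\dis\big(\a{T}^A_{\sigma_A^{-1}(\ell)},\a{T}^B_{\sigma_B^{-1}(\ell)}\big)=\sum_{\substack{i,j<n\\ j=i+k\bmod n}}\dis\big(\a{T}^A_i,\a{T}^B_j\big)\ge\min_{k<n}\sum_{\substack{i,j<n\\ j=i+k\bmod n}}\dis\big(\a{T}^A_i,\a{T}^B_j\big),
\end{equation*}
which is the reverse inequality.

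The main obstacle I anticipate is justifying rigorously that every homomorphism (in particular $\alpha,\beta$ and each intermediate large embedding) restricts to a rotation on the core $\Core{n}$, and that the decomposition of the $p+q$ steps into per-component sub-paths is genuinely a disjoint decomposition that can be rearranged freely. The rotation claim is essentially because the core is a single $n$-cycle and an embedding must send it isomorphically onto the core of the target (by (MU2), the core is unique up to isomorphism, and a subalgebra isomorphic to $\Core{n}$ inside a connected algebra with core $\Core{n}$ must be the core itself); the only automorphisms of a cycle of length $n$ that extend consistently are the cyclic rotations, not the reflections, because $f$ must be respected and $f$ orients the cycle. The second point — that reshuffling steps to isolate one component at a time does not change path length and preserves the endpoints — is exactly the ``independence of the colors'' mechanism already invoked (via Proposition~\ref{lem-largeMu}) in the proof leading to equation~\eqref{eq:discomp}, so I would cite that discussion rather than redo it. A minor additional check: when some $\a{T}^A_i$ is infinitely generated, $\dis(\a{T}^A_i,\a{T}^B_j)$ may be $\infty$ but then $\dis(\a{A},\a{B})=\infty$ as well for that choice of $k$, so the formula still reads correctly once we note the finite case is the only one where the argument has content.
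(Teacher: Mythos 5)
Your proposal is correct and follows essentially the same route as the paper, which itself only remarks that the proof ``goes exactly in the same spirit'' as Theorem~\ref{prop-perm} with the extra constraint that the matching of tree-algebras must respect the common core --- your observation that embeddings restrict to rotations of the $n$-cycle (since $f$ orients it) is precisely the content of that constraint, and your per-in-tree decomposition of the steps is the ``independence of the colors'' argument transplanted from components to in-trees. The only blemish is the parenthetical claim that $\dis(\a{A},\a{B})=\infty$ ``cannot happen here'': it can (e.g.\ when exactly one of the two algebras is infinitely generated, by Proposition~\ref{embdgen}), but as you note the inequality is vacuous in that case, so nothing is affected.
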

\begin{proof}
 The proof goes exactly in the same spirit as the one of
 Theorem~\ref{prop-perm}; the main differences are that here we do not
 have unmatched pairs because $\a{A}$ and $\a{B}$ have the same number
 of components in their tree-algebra decompositions since they have
 isomorphic cores, and that here permutations matching tree-algebras
 in the two tree-algebra decomposition cannot be arbitrary because
 they also have to preserve the structure of the common finite core.
\end{proof}

Thus, the distance between connected monounary algebras having
isomorphic finite cores can be reduced to the distances between the
tree-algebras in their tree-algebra decompositions. Note that
Proposition~\ref{prop-tree} does not work for $\MuN$ among others
because monounary algebras of core $\MuN$ cannot be uniquely (up to permutation) decomposed into tree-algebras.

So far we have seen how to reduce the distance between monounary
algebras of finitely many components to the distances between their
connected components and how to reduce the distance between connected
algebras of finite cores to the distance between tree-algebras. Now we
are going to reduce the distance between finite tree-algebras to the
distance between shorter trees.

The idea roughly is to break a tree-algebra $\a{T}$ down into
the main-subtrees rooted in the children of the root of $\a{T}$. Formally, we do that by constructing a forest-algebra $\a{F}$ as
follows. We remove the core element $c$ together with the arrows
arriving to $c$ from the tree $\a{T}$ and then we attach a loop to
every element that was a child of $c$ in the tree-algebra, see
Figure~\ref{fig-forest} for an example illustration. The algebra
$\a{F}$ is called \emph{the \textbf{associated forest-algebra} (of
  main-subtrees) to $\a{T}$}.

\begin{figure}[!htb]
  \begin{tikzpicture}[scale=0.9]

\tikzset{arrow/.style = {very thick,->,> = stealth}}
\tikzset{point/.style = {shape=circle, fill, inner sep=0, minimum size=3}}

\node[point] (0) at (5,-0.5) {};
\draw[arrow,dashed] (0) to [out=-45, in=215, looseness=21]  (0);
\node[right,xshift=5] at (0) {$c$};

\node[point] (1) at (4,0.5){}; 
\node[point] (1a) at (3,1.5){}; 
\node[point] (1aa) at (2.5,2.5){};
\node[point] (1ab) at (3.5,2.5){};
\draw[arrow,dashed] (1) to (0); 
\draw[arrow] (1a) to (1);
\draw[arrow] (1aa) to (1a);
\draw[arrow] (1ab) to (1a);

\node[point] (2) at (5,0.5){};
\node[point] (2a) at (5,1.5){};
\node[point] (2aa) at (5,2.5){};
\draw[arrow,dashed] (2) to (0);
\draw[arrow] (2a) to (2);
\draw[arrow] (2aa) to (2a);

\node[point] (3) at (6,0.5){};
\node[point] (3a) at (6,1.5){};
\node[point] (3b) at (7, 1.5){};
\node[point] (3ba) at (6.5,2.5){};
\node[point] (3bb) at (7.5,2.5){};
\draw[arrow,dashed] (3) to (0);
\draw[arrow] (3a) to (3);
\draw[arrow] (3b) to (3);
\draw[arrow] (3ba) to (3b);
\draw[arrow] (3bb) to (3b);

\node[below left,xshift=-10,yshift=-10] at  (0) {$\a{T}$};

\begin{scope}[shift={(8,0)}]
\node[point] (t1) at (4,0.5){}; 
\node[point] (t1a) at (3,1.5){}; 
\node[point] (t1aa) at (2.5,2.5){};
\node[point] (t1ab) at (3.5,2.5){};
\draw[arrow] (t1a) to (t1);
\draw[arrow] (t1aa) to (t1a);
\draw[arrow] (t1ab) to (t1a);
\draw[arrow ] (t1) to[out=-45, in=215, looseness=21] (t1);
\end{scope}

\begin{scope}[shift={(10,0)}]
\node[point] (t2) at (5,0.5){};
\node[point] (t2a) at (5,1.5){};
\node[point] (t2aa) at (5,2.5){};
\draw[arrow] (t2a) to (t2);
\draw[arrow] (t2aa) to (t2a);
\draw[arrow ] (t2) to[out=-45, in=215, looseness=21] (t2);
\end{scope}

\begin{scope}[shift={(12,0)}]
\node[point] (t3) at (6,0.5){};
\node[point] (t3a) at (6,1.5){};
\node[point] (t3b) at (7, 1.5){};
\node[point] (t3ba) at (6.5,2.5){};
\node[point] (t3bb) at (7.5,2.5){};
\draw[arrow] (t3a) to (t3);
\draw[arrow] (t3b) to (t3);
\draw[arrow] (t3ba) to (t3b);
\draw[arrow] (t3bb) to (t3b);
\draw[arrow ] (t3) to[out=-45, in=215, looseness=21] (t3);
\end{scope}

\draw [decorate,decoration={brace,amplitude=10pt}] (19,0) -- (11,0)
node [black,midway,yshift=-20pt] {the associated forest-algebra
  $\a{F}$ (of main-subtrees) to $\a{T}$};

\end{tikzpicture}

   \caption{A finite tree-algebra broken down into shorter trees \label{fig-forest}}
\end{figure}
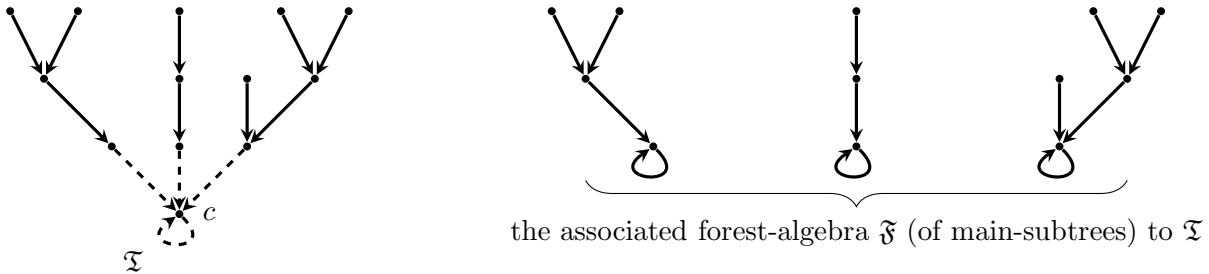

\begin{prop}\label{prop-treereduc}
Let $\a{T}_1$ and $\a{T}_2$ be two tree-algebras, and let $\a{F}_1$ and
$\a{F}_2$ be their associated forest-algebras. Then,
$\dis(\a{T}_1,\a{T}_2)=\dis(\a{F}_1,\a{F}_2)$.
\end{prop}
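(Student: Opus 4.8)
The plan is to establish the two inequalities $\dis(\a{T}_1,\a{T}_2)\le\dis(\a{F}_1,\a{F}_2)$ and $\dis(\a{F}_1,\a{F}_2)\le\dis(\a{T}_1,\a{T}_2)$ by converting paths in one network into paths of the same blue-length in the other. The key structural observation is that there is a canonical bijection between tree-algebras and forest-algebras: given a forest-algebra $\a{F}$ (a disjoint union of tree-algebras, each with core $\Core{1}$), one reconstructs the tree-algebra $\widehat{\a{F}}$ by adding one fresh core element $c$ with a loop, deleting the loop at each old root, and routing each old root to $c$; this is inverse to $\a{T}\mapsto\a{F}$. I would first record that this correspondence interacts well with the two basic moves produced by Proposition~\ref{lem-largeMu}: a large embedding $\a{F}\add\a{F}'$ either (i) adjoins a whole new connected piece ($\MuN$, some $\Mpl{n}{l}$, or a new tree-algebra via a tail on a freshly added core), or (ii) attaches a finite tail $+_a m$ inside an existing piece. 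Since every connected component of a forest-algebra is itself a tree-algebra, the added pieces in case (i) are always tree-algebras, so $\a{F}'$ is again a forest-algebra; and in both cases the corresponding modification of $\widehat{\a{F}}$ is again a single large embedding of tree-algebras, because adding a tail or a new main-subtree to $\widehat{\a{F}}$ is exactly a $+_a m$ move or a ``new core plus tail'' move on $\widehat{\a{F}}$.

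Granting that, for the direction $\dis(\a{T}_1,\a{T}_2)\le\dis(\a{F}_1,\a{F}_2)$ I would take an optimal path realizing $\dis(\a{F}_1,\a{F}_2)$; by Proposition~\ref{prop-amalg} (push-up) and Proposition~\ref{prop:pushpath} we may assume it has the span form $\a{F}_1\addn{m}\a{E}\dda\a{F}_2$ with $m+n=\dis(\a{F}_1,\a{F}_2)$, where $\a{E}$ is a forest-algebra (a large over-algebra of a forest-algebra in the class of all monounary algebras is again a disjoint union of tree-algebras, since adding $\MuN$ or $\Mpl{n}{l}$ cannot occur in a minimal path connecting two forest-algebras — such a gray component could be deleted to shorten the path, exactly the argument used around Figure~\ref{fig-perm}). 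Applying the hat construction to every vertex of this path turns each arrow into a large embedding of tree-algebras by the paragraph above, yielding $\a{T}_1\addn{m}\widehat{\a{E}}\dda\a{T}_2$, hence $\dis(\a{T}_1,\a{T}_2)\le m+n$. For the reverse direction, I would start from an optimal span $\a{T}_1\addn{m}\a{S}\dda\a{T}_2$ guaranteed again by push-up; here $\a{S}$ is a connected monounary algebra whose core is $\Core{1}$ (adding tails or main-subtrees to a tree-algebra keeps the core $\Core{1}$, and since $\a{T}_1,\a{T}_2$ are connected and $\a{S}$ contains isomorphic copies of both with their roots mapped into the unique core element, $\a{S}$ must be connected with core $\Core{1}$). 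Then $\a{S}$ is itself a tree-algebra, its associated forest-algebra is a forest-algebra, and peeling off the core along every vertex of the path gives $\a{F}_1\addn{m}(\text{forest of }\a{S})\dda\a{F}_2$, so $\dis(\a{F}_1,\a{F}_2)\le m+n=\dis(\a{T}_1,\a{T}_2)$.

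I would then handle the degenerate possibility $\dis(\a{T}_1,\a{T}_2)=\infty$ or $\dis(\a{F}_1,\a{F}_2)=\infty$ separately: if either distance is infinite, I claim so is the other, because a finite path on one side transports (by the constructions above) to a finite path on the other side; combining this with the two finite-case inequalities gives equality in all cases.

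The main obstacle I expect is the bookkeeping needed to justify that a minimal path between two forest-algebras (resp. two tree-algebras) can be taken to pass only through forest-algebras (resp. tree-algebras with core $\Core{1}$) — i.e. ruling out ``off-type'' intermediate vertices. For forest-algebras this is the same deletion-of-gray-components argument already deployed in the proof of Theorem~\ref{prop-perm}; for the tree-algebra side it is a short connectivity argument about where the roots of $\a{T}_1$ and $\a{T}_2$ can land. Once this is in place, the correspondence between the two kinds of large-embedding moves is essentially a direct check from Definition~\ref{def-MUtail} and Proposition~\ref{lem-largeMu}, so the rest is routine.
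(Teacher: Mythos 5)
Your proposal is correct and follows essentially the same route as the paper: the paper first records (via the push-up property and the coloring/gray-component argument) that minimal paths between forest-algebras, respectively tree-algebras, can be taken to pass only through forest-algebras, respectively tree-algebras, and then transports such a path through the mutually inverse constructions $F$ and $T$ (your ``peel off the core'' and ``hat'' maps), using that these preserve large embeddings by Proposition~\ref{lem-largeMu}. The points you flag as needing care are exactly the ones the paper isolates as (MF), (MT) and equations \eqref{12}--\eqref{13} before giving the short two-inequality argument.
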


Consider two finite tree-algebras $\a{T}_1$ and $\a{T}_2$. By Proposition~\ref{prop-treereduc}, we know that the distance between $\a{T}_1$ and
$\a{T}_2$ is the same as the distance between their respected
associated algebras of main-subtrees $\a{F}_1$ and $\a{F}_2$. Now we
can use Theorem~\ref{prop-perm} to reduce the problem to finding
distances between pairs of shorter trees, namely the tree-algebras in
the tree-algebra decomposition of $\a{F}_1$ and $\a{F}_2$. So, all what remains is to prove Proposition~\ref{prop-treereduc}.

Before proving Proposition~\ref{prop-treereduc}, we make some observations concerning forest-algebras in general. Suppose that $\a{F}_1$ and $\a{F}_2$ are two forest-algebras of finite distance from each other. Then, by the push-up property, there are $p,q\in\omega$ and $\a{D}\in\CK$ such that
$$\a{F}_1\addn{p}\a{D}, \ \ \a{F}_2\addn{q}\a{D} \ \  \text{ and } \ \ \dis(\a{F}_1,\a{F}_2)=p+q.$$
One can apply the coloring (and the related discussion) of Figure~\ref{fig-perm} for $\a{F}_1$ and $\a{F}_2$ in the place of $\a{A}$ and $\a{B}$, respectively. Hence, by (B), (C) and (D), every component of the big algebra  $\a{D}$ must have a core isomorphic to $\a{C}_1$. Since every algebra in the path $\a{F}_1-\a{D}-\a{F}_2$ is embeddable into $\a{D}$, it follows that each component of each algebra in the path $\a{F}_1-\a{D}-\a{F}_2$ is a tree-algebra.

\begin{enumerate}
\item[(MF)] Two forest-algebras are of finite distance if{}f they are connected in the network $\mathcal{N}(\CK)$ by a minimal path that contains only forest-algebras.
\end{enumerate}

With a very similar argument, we can conclude the same for tree-algebras. 

\begin{enumerate}
\item[(MT)] Two tree-algebras are of finite distance if{}f they are connected in the network $\mathcal{N}(\CK)$ by a minimal path that contains only tree-algebras.
\end{enumerate}

Actually we can say more: Two connected monounary algebras $\a{A}$ and $\a{B}$ with isomorphic cores are of finite distance if{}f these algebras are connected in the network $\mathcal{N}(\CK)$ by a minimal path that contains only connected algebras with cores isomorphic to the cores of $\a{A}$ and $\a{B}$.

Let $F$ be the map taking a tree-algebra to its associated forest-algebra. It is not hard to see that, for every forest-algebra $\a{F}$, one can find a tree algebra $\a{T}$ such that $F(\a{T})=\a{F}$. This tree algebra is unique up to isomorphism. We pick one tree-algebra with this property and we denote it by $T(\a{F})$. The tree-algebra $T(\a{F})$ can be constructed by deleting all the loops from the tree-algebras in $\a{F}$ and joining all the roots of these trees to a brand new common root that has a loop. See Figure~\ref{fig-forest} ``backwards'' for an
illustration. Therefore,
\begin{equation}\label{12}
  F(T(\a{F}))\cong \a{F}\quad\text{and}\quad T(F(\a{T}))\cong \a{T}
\end{equation}
for all forest-algebra $\a{F}$ and tree-algebra $\a{T}$.  By their
definitions, it is clear that both $F$ and $T$ map isomorphic algebras
to isomorphic algebras. Moreover, by Proposition~\ref{lem-largeMu}, we have
\begin{equation}\label{13}
  \a{F}\add \a{F}' \iff T(\a{F})\add T(\a{F}') \qquad\text{and}\qquad
  \a{T}\add\a{T}'\iff F(\a{T})\add F(\a{T}').
\end{equation}

Now we are ready to prove Proposition~\ref{prop-treereduc}.
\begin{proof}[Proof of Proposition~\ref{prop-treereduc}]
   By (MT), a minimal path between $\a{T}_1$ and $\a{T}_2$ realizing
   $\dis(\a{T}_1,\a{T}_2)$ contains only tree-algebras. So, by \eqref{13}, taking
   the $F$-image of all of the tree-algebras in this minimal path would give a path of the same length between forest-algebras
   $\a{F}_1=F(\a{T}_1)$ and $\a{F}_2=F(\a{T}_2)$. Hence,
   $$\dis(\a{F}_1,\a{F}_2)\le\dis(\a{T}_1,\a{T}_2).$$
   Analogously, by (MF), a minimal path between $\a{F}_1$ and
   $\a{F}_2$ contains only forest-algebras. So, by \eqref{12} and \eqref{13}, taking the
   $T$-images of the forest-algebras in this path would give a path of
   the same length between the tree-algebras $\a{T}_1\cong T(\a{F}_1)$ and
   $\a{T}_2\cong T(\a{F}_2)$. Hence,
   $$\dis(\a{T}_1,\a{T}_2)\le\dis(\a{F}_1,\a{F}_2).$$ Consequently,
   $\dis(\a{T}_1,\a{T}_2)=\dis(\a{F}_1,\a{F}_2)$, and this is what we
   wanted to show.
\end{proof}

Now we give a concrete example. Consider algebras $\a{A}$ and $\a{B}$
illustrated in Figure~\ref{fig-muexample}.

\begin{figure}[!hbt]
\begin{tikzpicture}[]
\tikzset{arrow/.style = {very thick,->,> = stealth}}
\tikzset{bullet/.style = {circle,draw,fill, inner sep=2}}
\tikzset{point/.style = {shape=circle,fill, inner sep=0, minimum size=3}}
\tikzset{cross line/.style={preaction={draw=white,-, shorten >=2pt, shorten
<=2pt, line width=3pt},<->,shorten >=0.5pt,shorten <= 0.5pt}}
\tikzset{big cross line/.style={preaction={draw=white,-, shorten >=2pt, shorten
<=2pt, line width=3pt},<->,shorten >=3pt,shorten <= 3pt}}
\tikzstyle{verda}=[green!70!black]

\tikzset{pics/perms/.style={code={
	\foreach \XX [count=\YY] in {#1} {
		\node[point] (T\YY) at (0,0.5*\YY) {};
		\node[point] (B\the\numexpr\XX) at (0.5,0.5*\YY) {};
   		}
	\foreach \XX [count=\YY] in {#1} {
	 \ifnum \YY=1 \draw[cross line,red] (T\YY) -- (B\YY); \fi
	 \ifnum \YY=2 \draw[cross line,verda] (T\YY) -- (B\YY); \fi
	 \ifnum \YY=3 \draw[cross line,blue] (T\YY) -- (B\YY); \fi
		}	
}}}

\node at (5,4) {$\dis(\a{A},\a{B})=4$};

\node[point] (0) at (-0,0) {};
\node[point] (1) at (1,0){}; 
\node[point] (11) at  (2,0.5){};
\node[point] (12) at (2,-0.5){};
\node[point] (2) at (0,1){};
\node[point] (21) at (0,2){};
\node[point] (211) at (0,3){};
\node[point] (2111) at (0,4){};
\node[point] (3) at (0,-1){};
\node[point] (31) at ((0,-2){};
\node[point] (311) at (-0.5,-3){};
\node[point] (312) at (0.5,-3){};

\draw[arrow] (0) to[out=-135, in=135, looseness=21] (0);
\draw[arrow] (1) to (0);
\draw[arrow] (11) to (1);
\draw[arrow] (12) to (1);
\draw[arrow] (2) to (0);
\draw[arrow] (21) to (2);
\draw[arrow] (211) to (21);
\draw[arrow] (2111) to (211);
\draw[arrow] (3) to (0);
\draw[arrow] (31) to (3);
\draw[arrow] (311) node[below left]{$\a{A}$} to (31);
\draw[arrow] (312)  to (31);

\node[point] (o) at (10,0){};
\node[point] (a) at (9,0){};
\node[point] (aa) at (8,0.5){};
\node[point] (ab) at (8,-0.5){};
\node[point] (aba) at (7,-1){};
\node[point] (b) at (10,1){};
\node[point] (ba) at (9.5,2){};
\node[point] (bb) at (10,2){};
\node[point] (bc) at (10.5,2){};
\node[point] (c) at (10,-1){};
\node[point] (ca) at (10,-2){};
\node[point] (caa) at (10,-3){};

\draw[arrow] (o) to[out=-45, in=45, looseness=21] (o);
\draw[arrow] (a) to (o);
\draw[arrow] (aa) to (a);
\draw[arrow] (ab) to (a);
\draw[arrow] (aba) to (ab);
\draw[arrow] (b) to (o);
\draw[arrow] (ba) to (b);
\draw[arrow] (bb) to (b);
\draw[arrow] (bc) to (b);
\draw[arrow] (c) to (o);
\draw[arrow] (ca) to (c);
\draw[arrow] (caa) node[below right] {$\a{B}$} to (ca);

\begin{scope}[shift={(-1,-6)}]
\pic{perms={1,2,3}};
\node[draw,inner sep=1,circle,blue,fill=white] at (0.9,1.5) {$3$};
\node[verda] at (0.9,1) {$1$};
\node[draw,inner sep=2,rectangle,red,fill=white]  at (0.9,0.5) {$1$};
\draw (-.1,0.2) to (1,0.2);
\node at (0.5,-0.1) {$\sum = 5$};
\end{scope}

\begin{scope}[shift={(1,-6)}]
\pic{perms={2,1,3}};
\node[draw,inner sep=1,circle,blue,fill=white] at (0.9,1.5) {$3$};
\node[red] at (0.9,1) {$2$};
\node[draw,inner sep=2,rectangle,verda,fill=white]  at (0.9,0.5) {$2$};
\draw (-.1,0.2) to (1,0.2);
\node at (0.5,-0.1) {$\sum = 7$};
\end{scope}

\begin{scope}[shift={(3,-6)}]
\pic{perms={1,3,2}};
\node[draw,inner sep=1,circle,verda, fill=white] at (0.9,1.5) {$1$};
\node[blue] at (0.9,1) {$2$};
\node[draw,inner sep=2,rectangle,red, fill=white]  at (0.9,0.5) {$1$};
\draw (-.1,0.2) to (1,0.2);
\node at (0.5,-0.1) {$\sum = 4$};
\end{scope}

\begin{scope}[shift={(5,-6)}]
\pic{perms={3,1,2}};
\node[draw,inner sep=1,circle,verda, fill=white] at (0.9,1.5) {$1$};
\node[red] at (0.9,1) {$2$};
\node[draw,inner sep=2,rectangle,blue ,fill=white]  at (0.9,0.5) {$1$};
\draw (-.1,0.2) to (1,0.2);
\node at (0.5,-0.1) {$\sum = 4$};
\end{scope}

\begin{scope}[shift={(7,-6)}]
\pic{perms={3,2,1}};
\node[draw,inner sep=1,circle,red,fill=white] at (0.9,1.5) {$4$};
\node[verda] at (0.9,1) {$1$};
\node[draw,inner sep=2,rectangle,blue, fill=white]  at (0.9,0.5) {$1$};
\draw (-.1,0.2) to (1,0.2);
\node at (0.5,-0.1) {$\sum = 6$};
\end{scope}

\begin{scope}[shift={(9,-6)}]
\pic{perms={2,3,1}};
\node[draw,inner sep=1,circle,red,fill=white] at (0.9,1.5) {$4$};
\node[blue] at (0.9,1) {$2$};
\node[draw,inner sep=2,rectangle,verda,fill=white]  at (0.9,0.5) {$2$};
\draw (-.1,0.2) to (1,0.2);
\node at (0.5,-0.1) {$\sum = 8$};
\end{scope}

\draw [decorate,decoration={brace,amplitude=10pt}]
(10.5,-6.5) -- (-1.5,-6.5) node [black,midway,yshift=-20pt] 
{$\min\to 4$};

\node[bullet] (X1) at (0.5, 4) {};
\node[bullet] (X2) at (2,0) {};
\node[bullet] (X3) at (1,-3) {};

\node[bullet] (Y1) at (9,2) {};
\node[bullet] (Y2) at (8,0) {};
\node[bullet] (Y3) at (9,-3) {};

\draw[thick,cross line,big cross line,blue] (X1) -- node[draw,inner sep=1,circle,fill=white] {3} (Y1); 
\draw[thick,cross line,big cross line,verda] (X2) -- node[draw,inner sep=1,circle,fill=white] {1} (Y1); 
\draw[thick,cross line,big cross line,red] (X3) -- node[draw,inner sep=1,circle,fill=white] {4} (Y1); 

\draw[thick,cross line,big cross line,blue] (X1) -- node[inner sep=1,circle,fill=white] {2} (Y2); 
\draw[thick,cross line,big cross line,verda] (X2) -- node[inner sep=1,circle,fill=white] {1} (Y2); 
\draw[thick,cross line,big cross line,red] (X3) -- node[inner sep=1,circle,fill=white] {2} (Y2); 

\draw[thick,cross line,big cross line,blue] (X1) -- node[draw,inner sep=2,rectangle,fill=white] {1} (Y3); 
\draw[thick,cross line,big cross line,verda] (X2) -- node[draw,inner sep=2,rectangle,fill=white] {2} (Y3); 
\draw[thick,cross line,big cross line,red] (X3) -- node[draw,inner sep=2,rectangle,fill=white] {1} (Y3); 

\end{tikzpicture}
  \caption{The distance between two finite tree-algebras \label{fig-muexample}}
\end{figure}

There are three main-subtrees of both algebras $\a{A}$ and $\a{B}$ of
the figure. To calculate the distance between $\a{A}$ and $\a{B}$, it
is enough to calculate the sum of the distances between these
main-subtrees for all the six permutations mapping these subtrees to
each other and take the minimum. This follows from
Proposition~\ref{prop-treereduc} and Theorem~\ref{prop-perm}. To
calculate the distances between any pair of the main-subtrees, we can
again use Proposition~\ref{prop-treereduc} and Theorem~\ref{prop-perm}
in the same way. However, since these main-subtrees are so simple, one
can easily determine their distances. Therefore, in the figure, we
just give their distances at the corresponding permutation without
further explanation. Since $4$ is the smallest sum, we have
$\dis(\a{A},\a{B})=4$.

The above considerations give a method by which one can calculate the
distance between any two finite monounary algebras. It is clear
however that this method is not very effective because one have to
consider lots of permutations in every reduction step. It is a task
for further research to find more effective algorithms and to
investigate the computational complexity of the problem of determining
the generator distance between two finite monounary algebras.

\begin{prob}
  What is the computational complexity of calculating the distance
  between two finite monounary algebras?
\end{prob}



Even though, Proposition~\ref{prop-treereduc} does not require the
tree-algebras to be finite, it does not seem to be of a great
help in the case when one of them is infinite. It is not
hard though to prove that two arbitrary tree-algebras are of finite
distance if{}f they differ only in finitely many branches, \ie if
both can be embedded into a tree-algebra that differs from each of
them only by finitely many extra elements. But even in this case, we
do not have a method to calculate this distance yet. We also remind
the reader that determining the distance between two connected
monounary algebras, each of which has a core isomorphic to $\MuN$, is
still open.


\begin{prob}
Determine the distance between countable monounary algebras.
\end{prob}

%

\bibliographystyle{plain}

\end{document}